\documentclass[accepted]{uai2026} 
                        
\usepackage[american]{babel}
\usepackage{tikz}
\usepackage{amssymb}
\usepackage{amsthm}
\newcommand{\train}{\mathcal{D}}
\newcommand{\obsdata}{\train_{\text{obs}}}
\newcommand{\mixdata}{\train_{\text{mix}}}
\newcommand{\tsone}{\tau_1^{*2}}
\newcommand{\tstwo}{\tau_2^{*2}}
\newcommand{\intdata}{\train_{\text{int}}}
\def\transpose{\intercal}
\newtheorem{lemma}{Lemma}[section]
\newtheoremstyle{boldremark}
  {3pt}                
  {3pt}                
  {\normalfont}        
  {}                   
  {\bfseries}          
  {:}                  
  {.5em}               
  {}                   

\theoremstyle{boldremark}
\newtheorem{remark}{Remark}
\newtheorem{theorem}{Theorem}[section]

\usepackage{natbib} 
\usepackage{mathtools} 
\usepackage{amsmath}
\usepackage{booktabs} 
\usepackage{tikz} 

\title{The relative value of interventional and observational samples in Bayesian Causal Linear Gaussian Models}

\author[1]{Valentinian Lungu}
\author[2]{Anish Dhir}
\author[3]{Mark van der Wilk}
\author[1]{Ioannis Kontoyiannis}

\affil[1]{Statistical Laboratory, Centre for Mathematical Sciences, University of Cambridge}
\affil[2]{Imperial College London}
\affil[3]{University of Oxford}
  
\begin{document}
\maketitle

\begin{abstract}
We investigate the asymptotic properties of Bayesian bivariate causal discovery for Gaussian Linear Structural Equation Models (SEMs) with heteroscedastic noise. We demonstrate that with purely observational data, the posterior distribution over the models fails to consistently identify the true causal structure—a consequence of the fundamental non-identifiability within the Markov Equivalence Class. Specifically, if the true generating mechanism corresponds to a connected graph ($A \rightarrow B$ or $B \rightarrow A$), the asymptotic behavior of the posterior is given by the ratio between the prior on the true model and the push-forward prior of the alternative. In contrast, for the independence model, we establish that the posterior concentrates at a stochastic polynomial rate of $O_p(n^{-1/2})$. 

To resolve this non-identifiability, we incorporate $m$ interventional samples and characterize the concentration rates as a function of the observational-to-total sample ratio, $\eta$. We identify a sharp \textit{concentration dichotomy}: while the independence graph maintains a polynomial $O_p(N^{-1/2})$ rate (where $N = n+m$), connected graphs undergo a phase transition to exponentially fast convergence. This highlights an \textit{exponential} relative importance between the two data types, as altering the amount of one data type directly changes the exponent governing the concentration speed. We derive explicit formulae for the exponential decay rates and provide precise conditions under which mixing observational and interventional data optimizes concentration speed. Finally, our theoretical findings are validated through empirical simulations in Bayesian Gaussian equivalent (BGe)-style prior specifications offering a principled foundation for experimental design in Bayesian causal discovery.

\end{abstract}

\section{Introduction}\label{sec:intro}

Over the last several decades, there has been a significant surge of interest in developing robust and scalable algorithms for detecting hidden dependencies between variables. In the causal modeling framework, these variables are typically represented as nodes in a Directed Acyclic Graph (DAG), where directed edges represent causal mechanisms. The researcher's objective is to perform \textit{structure learning}---identifying the graph that best explains the observed data. This problem of causal graph identification is foundational across diverse domains, including the mapping of gene regulatory networks~\citet{THOR}, the evaluation of clinical treatments~\citet{wu:24}, and the optimization of advertising strategies~\citet{VANDENBROECK2018470}.

There is a rich set of causal discovery tools. These include constraint-based methods like the PC algorithm~\citet{PCalgorithm}, score-based optimization approaches~\citet{Peters_2013,lasso,NOTEARS}, and Bayesian methodologies~\citet{bayesdag,BCDNets,beingBayesian,annadani2021variational,GFlowbayesian,dynamicbayesian,cao:19}. More recently, meta-learning frameworks based on Neural Processes have also been explored in ~\citet{dhir2025metalearningapproachbayesiancausal}and \citet{dhir2026estimatinginterventionaldistributionsuncertain}, offering new avenues for amortized causal inference. While optimization-based methods typically yield a point estimate of the causal graph, Bayesian/amortized approaches are usually preferred because they provide a posterior distribution over the graph space. This uncertainty quantification is particularly vital in safety-critical domains, where distinguishing between strong causal evidence and mere lack of data is essential for robust decision-making. Although these works show good prediction power, a deep theoretical approach is still missing. 

In this work, we focus on the Bayesian theoretical treatment of \textit{Gaussian Linear Structural Equation Models (SEMs)}. These linear models have been rigorously studied from both  frequentist~\citet{peters2014identifiability,pearl2009causality} and Bayesian~\citet{heckerman2006a} perspectives. Notably, \citet{peters2014identifiability} demonstrated that if Gaussian errors are homoscedastic (sharing a common variance) or if variances are known, the causal structure is identifiable from purely observational data. Conversely, under unknown heteroscedastic noise, the model is identifiable only up to its \textit{Markov Equivalence Class (MEC)}. From a Bayesian standpoint, \citet{heckerman2006a} introduced a specific class of priors that satisfy \textit{score equivalence}, ensuring the posterior assigns equal mass to all structures within the same MEC. This is commonly known as the \textit{Bayesian Gaussian equivalent (BGe)} score. 

This paper presents a two-fold investigation into posterior behavior in non-identifiable and interventional settings. First, we examine the bivariate Gaussian SEM with heteroscedastic noise. We demonstrate that when a causal edge exists between two nodes, the posterior of the true model fails to concentrate to $1$ as the sample size $n$ grows large. Instead, the posterior limit is determined by the ratio between the prior on the true model and the \textit{push-forward prior} of the alternative model. This result generalizes the findings by \citet{dhir2024bivariatecausaldiscoveryusing} and \citet{geiger_annals}, which showed that a ratio of $1$ leads to an uninformative posterior split between the two causal directions. In contrast, when the independence model is the true generating mechanism, we find that the posterior concentrates to $1$ at a stochastic rate of $O_p(1/\sqrt{n})$. 

To resolve such non-identifiability, researchers often leverage \textit{hard interventions} to distinguish between direct causation and latent confounding~\citet{pearl2009causality}. However, in many practical scenarios---such as gene knockout experiments or clinical trials---interventions are costly or technically difficult, while observational data is abundant. This creates a critical trade-off: determining how many expensive interventional samples are required to augment cheap observational data to achieve identifiability. By forcing a variable to a fixed value, an intervention ``changes'' the graph, severing its natural dependencies and isolating its downstream effects.

The second part of this work analyzes the scenario where the researcher has access to $m$ interventional samples alongside $n$ observational samples. While the sample complexity of mixing observational and interventional data has been recently studied as a \textit{closeness testing} problem for discrete~\citet{acharya23b} and continuous~\citet{jamshidi2025samplecomplexitynonparametriccloseness} data, we provide the first comprehensive theoretical investigation into \textit{posterior concentration} as a function of the observational sample ratio $\eta = n/N$, where $N = n+m$. We identify a second dichotomy in concentration speeds similar to the one observed in the homoscedastic setting described by \cite{lungu2025bayesiancausaldiscoveryposterior}: for the connected graphs, the posterior concentrates to $1$ at an \textit{exponential rate}, with an exponent characterized by a weighted sum of \textit{relative entropies} (Kullback-Leibler divergences) between the observational and interventional densities. This result implies an  \textit{exponential relative dependence}  between the two data regimes; specifically, the sample composition directly determines the exponent of the convergence rate, such that even marginal shifts in the data mixture result in exponential-scale changes to the speed of identification. Our analysis distinguishes between interventions on the cause versus on the effect. While intervening on the effect intuitively benefits from a mix of data, one might assume that intervening on the cause renders observational data redundant. Counter-intuitively, we show that for \textit{moderate SNR regimes and conservative interventions}, observational samples remain important. In the latter setting, the interventional signal alone may be insufficient to distinguish the causal mechanism from the noise floor; observational data helps by allowing the posterior to calibrate the baseline variance and more effectively "learn" the structure.   In contrast to this exponential regime, we establish that for the independence model, the posterior convergence undergoes a phase transition, slowing to a stochastic polynomial rate of $O_p(1/\sqrt{N})$. 


\section{Related work}\label{sec:related_work}
The problem of identifying generating structures from observational data has been a central focus of the SEM literature. Early constraint-based approaches, such as the PC and FCI algorithms \citet{spirtes:00}, leverage conditional independence tests to recover the causal graph. However, because these methods rely on the faithfulness assumption, they are generally limited to recovering the Markov Equivalence Class (MEC) rather than the unique Directed Acyclic Graph (DAG). Along the same lines, score-based methods were developed to search for graphs that maximize a scoring function, often derived from parametric assumptions \citet{Chickering02a, heckerman2006a}. 

In the linear-Gaussian setting, the Bayesian Gaussian equivalent (BGe) score is a standard choice \citet{geiger2021learninggaussiannetworks}. While computationally attractive, linear-Gaussian models suffer from a fundamental limitation known as the "Gaussian Trap": the covariance matrix of the observed variables generally does not contain sufficient information to distinguish between Markov equivalent graphs (e.g., $A \rightarrow B$ vs. $B \rightarrow A$) \citet{shimizu06a}. Exceptions exist, such as the result by \citet{peters2014identifiability}, which proves identifiability in the special case where all error terms share equal variance. The "Gaussian Trap" is, however, an artifact of the distributional assumption. The introduction of the Linear Non-Gaussian Acyclic Model (LiNGAM) by \citet{shimizu06a} demonstrated that if error terms are non-Gaussian, the full causal structure is identifiable using Independent Component Analysis (ICA) and the Darmois-Skitovich theorem. Building on this, \citet{hoyer2012bayesiandiscoverylinearacyclic} proposed Bayesian LiNGAM to combine the identifiability of non-Gaussian models with the uncertainty quantification of the Bayesian paradigm. Rather than outputting a single point-estimate graph, Bayesian LiNGAM computes the posterior probability over DAGs, using non-Gaussian likelihoods to break the symmetry of the MEC.

While non-Gaussianity provides a statistical route to identifiability, interventions offer a physical route. Interventional data fundamentally alters the search space by refining the MEC into the smaller Interventional Markov Equivalence Class (I-MEC) \citet{hauser2012characterizationgreedylearninginterventional}. Interventions are typically categorized as "hard" or "soft." A hard intervention, denoted by $do(A=a)$, forces a variable to a specific value, effectively removing all incoming edges to $A$. In contrast, a soft (or parametric) intervention modifies the conditional distribution of $A$—for instance, by shifting the noise mean or variance—without changing the dependency on its parents. \citet{hauser2012characterizationgreedylearninginterventional} characterized the I-MEC for hard interventions, showing that as the set of intervention targets expands, the I-MEC shrinks, eventually collapsing to the unique true DAG.

Recent interest has shifted focus from asymptotic identifiability to finite-sample complexity, addressing the practical trade-offs between data types. \citet{acharya23b} investigate the sample complexity of distinguishing cause from effect in discrete bivariate settings.  Their work establishes tight bounds on the number of interventional samples ($m$) needed as a function of available observational samples ($n$).

In the continuous domain, \citet{jamshidi2025samplecomplexitynonparametriccloseness} address the problem of 
detecting causal direction/hidden confounding from interventional and observational data. They frame it as a non-parametric closeness testing problem. Estimating divergence measures like the KL divergence in continuous, high-dimensional spaces is notoriously difficult due to estimation bias. To mitigate this, the authors propose an estimator based on the Von Mises expansion, establishing the first sample complexity guarantees for distinguishing cause from effect in continuous, non-linear systems with potential hidden confounders. 

\section{Non-identifiable setting}
\label{sec:non_ident}

In this work, we consider the 2-nodes linear structural equations model

\begin{equation}
    X = A^\transpose X + \epsilon,
    \label{eq:SCM_vector}
\end{equation}
where $X = [X(1), X(2)]^\transpose \in \mathbb{R}^2$ is a 2-dimensional vector, and $\epsilon = [\epsilon(1), \epsilon(2)]^\transpose \in \mathbb{R}^2$ is joint Gaussian noise with zero-mean and diagonal covariance matrix $\Sigma = Diag(\tau_1^2, \tau_2^2)$. The matrix A is described in terms of its structure $S$ and weight $w$, where $S$ is a binary matrix with entries $S_{ij} = \mathbb{I}\{A_{ij} \neq 0\}$. Naturally, we refer to $S$ as being the structure matrix and the parameters $\theta := [w, \tau_1^2, \tau_2^2]^\transpose \in \mathbb{R} \times (0, \infty)^2=:\Theta$ as the associated parameters. 

Throughout the article we assume that $S$ is the corresponding adjacency matrix of a 
DAG on the set $\{1, 2\}$. Since we consider only 2 nodes, $S$ can take values only in the following set $\mathcal{S} := \{S^1, S^2, S^3\}$ where

$$S^1 = \begin{bmatrix}
    0 & 1\\
    0 & 0
\end{bmatrix}, \quad S^2 = \begin{bmatrix}
    0&0\\
    1&0
\end{bmatrix}, \quad \text{and}\quad S^3 = \begin{bmatrix}
    0&0\\
    0&0
\end{bmatrix}.$$
For simplicity, we write $A = m(S, w)$ for the map and denote $P_{m(S, \theta)}$ as the law of $X = (I-A^\transpose)^{-1}\epsilon$ with the covariance matrix given by the last two components of $\theta$. The corresponding density for $P_{m(S, \theta)}$ is denoted by $f(x\mid \theta, S)$ and the log-likelihood by $\ell(\theta\mid S) := \log f(x\mid \theta, S)$ with $x = [x(1), x(2)]^\transpose$ for all $S \in \mathcal{S}$. For each structure matrix $S^i$, and the corresponding parameters $\theta^i$ we define the model $\mathcal{M}^i$ as follows

$$\mathcal{M}^i = \{\theta^i, S^i\},\quad \text{for } i \in \{1,2,3\}.$$

\subsection{Frequentist approach}

\begin{lemma}
\label{lemma:non_ident}
    The two structures, $S^1$ and $S^2$ are non-identifiable, i.e. for any choice of parameters $\theta^1\in \Theta$, there exists $\theta^2\in \Theta$ such that the likelihoods of the two models are the same, i.e. $f(x\mid \theta^1, S^1) = f(x\mid \theta^2, S^2)$ for any $x = [x(1), x(2)] \in \mathbb{R}^2$.
\end{lemma}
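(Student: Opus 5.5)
Both laws are zero-mean bivariate Gaussians, so each is determined by its covariance matrix. The plan is therefore to show that every covariance produced by $S^1$ is also produced by $S^2$, by writing down $\theta^2$ explicitly.

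\textbf{Covariance under $S^1$.} Here $A_{12}=w$, so $X(1)=\epsilon(1)$ and $X(2)=wX(1)+\epsilon(2)$. With $\theta^1=(w,\tau_1^2,\tau_2^2)$ this gives
\begin{equation*}
\Sigma_X^{(1)}=\begin{bmatrix}\tau_1^2 & w\tau_1^2\\ w\tau_1^2 & w^2\tau_1^2+\tau_2^2\end{bmatrix}.
\end{equation*}
This matrix is positive definite, since its determinant is $\tau_1^2\tau_2^2>0$.

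\textbf{Covariance under $S^2$.} Here $X(2)=\epsilon(2)$ and $X(1)=vX(2)+\epsilon(1)$, with parameters $\theta^2=(v,\sigma_1^2,\sigma_2^2)$. This gives
\begin{equation*}
\Sigma_X^{(2)}=\begin{bmatrix}v^2\sigma_2^2+\sigma_1^2 & v\sigma_2^2\\ v\sigma_2^2 & \sigma_2^2\end{bmatrix}.
\end{equation*}

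\textbf{Matching the two.} Setting $\Sigma_X^{(2)}=\Sigma_X^{(1)}$ entry by entry determines the parameters uniquely:
\begin{equation*}
\sigma_2^2=w^2\tau_1^2+\tau_2^2,\qquad v=\frac{w\tau_1^2}{w^2\tau_1^2+\tau_2^2},\qquad \sigma_1^2=\tau_1^2-\frac{w^2\tau_1^4}{w^2\tau_1^2+\tau_2^2}.
\end{equation*}
The last expression simplifies to $\sigma_1^2=\tau_1^2\tau_2^2/(w^2\tau_1^2+\tau_2^2)$.

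\textbf{Checking $\theta^2\in\Theta$.} Both variances are strictly positive because $\tau_1^2,\tau_2^2>0$, and $v\in\mathbb{R}$. Hence $\theta^2$ is a valid parameter. This is the only point that needs any care, and it is immediate once $\sigma_1^2$ is put in the simplified form above.

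\textbf{Conclusion.} The two Gaussian densities have equal (zero) means and equal covariance matrices, so $f(x\mid\theta^1,S^1)=f(x\mid\theta^2,S^2)$ for all $x\in\mathbb{R}^2$. The converse direction, starting from $S^2$, follows by symmetry.
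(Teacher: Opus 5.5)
Your proof is correct and follows the same route as the paper: you compute the zero-mean Gaussian covariance under $S^1$ and exhibit explicit $S^2$ parameters that reproduce it. You also derive those parameters by entry matching and check $\theta^2\in\Theta$, which the paper leaves as ``easy to check''.

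The one difference is orientation, which is harmless for this lemma. You read $S^1$ literally from $X=A^\transpose X+\epsilon$ as $X(1)\to X(2)$, whereas the paper's proof, and its map $\gamma$ in \eqref{eq:gamma} used later in Theorem~\ref{thm:nonident_mec}, treat $S^1$ as $X(2)\to X(1)$. Your explicit map is therefore $\gamma$ with the indices $1$ and $2$ interchanged throughout, so align the conventions if you reuse it downstream.
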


We denote by $\gamma: \Theta \to \Theta$ the map defined by the parameter transformation described in the proof (see Appendix \ref{appendix:lemma_non_ident}) of the above lemma . Specifically, if $\theta^1 = [w, \tau_1^2, \tau_2^2]^\transpose$, then the mapping $\theta^2 = \gamma(\theta^1)$ is given by
\begin{equation}
\label{eq:gamma}
\gamma(w, \tau_1^2, \tau_2^2) = \left[
\frac{w \tau_2^2}{w^2 \tau_2^2 + \tau_1^2}, \
w^2 \tau_2^2 + \tau_1^2, \
\frac{\tau_1^2 \tau_2^2}{w^2 \tau_2^2 + \tau_1^2}
\right]^\transpose
\end{equation}
This mapping formally establishes the non-identifiability result. Since $\gamma(\theta^1)$ is a continuous transformation, and we have shown that for any choice of parameters $\theta^1$ there exists a distinct set of parameters $\theta^2 = \gamma(\theta^1)$ such that $f(x \mid S^1, \theta^1) = f(x \mid S^2, \theta^2)$ for any $x\in \mathbb{R}^2$, the two structures $S^1$ and $S^2$ are non-identifiable.

Given $n$ independent and identically distributed (i.i.d.) observations $\obsdata := (X_i)_{1\leq i\leq n}$ generated from \eqref{eq:SCM_vector}, we calculate the maximum likelihood estimators (MLEs) for the three structures considered: $\hat{\theta}^1$ for $S^1$ (where $X(1)\to X(2)$),  $\hat{\theta}^2$ for $S^2$ (where $X(2) \to X(1)$) and $\hat{\theta}^3$ for the independence model $S^3$.
Lemma~\ref{lemma:MLE_nonident} establishes that $\hat{\theta}_2 = \gamma(\hat{\theta}_1)$, confirming that $S^1$ and $S^2$ are observationally equivalent. This symmetry implies that the causal structures are non-identifiable through likelihood maximization alone, as both models achieve the same global maximum likelihood.

\subsection{Bayesian approach}
We adopt a Bayesian framework to perform causal structure inference. While $S^1$ and $S^2$ remain observationally non-identifiable, we characterize the asymptotic behavior of the posterior distribution and discuss the impact of the prior distributions.

We define a hierarchical model over the set of causal structures by first assigning a uniform prior distribution over the model space, $\pi(S) = \frac{1}{3}$, for $S \in \mathcal{S}$. Conditional on the structure $S$, we assign a prior density $\pi(\theta \mid S)$ to the associated parameter vector $\theta$. For the purpose of this general formulation, we leave the functional form of $\pi(\theta \mid S)$ unspecified for now.

Assuming access to $n$ i.i.d. samples drawn from \eqref{eq:SCM_vector}, $\obsdata = (X_i)_{1\leq i\leq n}$, the posterior over the structures is
$S \in \mathcal{S}$
\begin{equation}
    \pi(S\mid\obsdata) = \frac{\int_{\theta \in \Theta} f(\obsdata\mid S, \theta)\pi(\theta\mid S)d\theta}{\sum_{j=1}^3 \int_{\theta \in \Theta} f(\obsdata\mid S^j, \theta)\pi(\theta\mid S^j)d\theta}. 
    \label{eq:post_observ}
\end{equation}

The marginal likelihood, 
$\int_{\theta \in \Theta} f(\obsdata \mid S, \theta)\pi(\theta \mid S)d\theta$,
is inherently sensitive to the choice of prior $\pi(\theta \mid S)$. However, for sufficiently regular priors, the integral is well-approximated via Laplace's method as $n \to \infty$. Due to space constraints, we defer the formal regularity conditions to the Appendix in Lemma~\ref{lemma:laplace_regular} where we verify that our setting satisfies the Laplace regularity requirements described by \citep{Kass1990TheVO}.

\begin{theorem}
\label{thm:nonident_mec}
    Let $\obsdata = (X_i)_{1\leq i \leq n}$ be $n$ i.i.d. observations generated by \eqref{eq:SCM_vector}. Assume that the priors $\pi(\theta\mid S)$ are four-times differentiable for any $S \in \mathcal{S}$. If the true generating model is $\mathcal{M}^1 = \{\theta^*, S^{1*}\}$ with $\theta^* = [w^*, \tsone, \tstwo]^\transpose$ and $|w^*| >0$, then, as $n\to\infty$, 

    $$\pi(S^{1*}\mid\obsdata) \xrightarrow{a.s.} \frac{1}{1+\frac{\tstwo}{w^{*2}\tstwo+\tsone}\cdot\frac{\pi(\theta^*\mid S^2)}{\pi(\theta^*\mid S^{1*})}}. $$
    Moreover, we note that 
    $$\frac{\tstwo}{w^{*2}\tstwo+\tsone}\cdot\frac{\pi(\theta^*\mid S^2)}{\pi(\theta^*\mid S^1)} = \frac{\gamma^{-1}\#\pi(\theta^*\mid S^2)}{\pi(\theta^*\mid S^{1})}$$
    
    with $\gamma^{-1}\#\pi(\cdot\mid S^2)$ being the push-forward measure of the alternative prior (for $S^2$) under the transformation defined in \eqref{eq:gamma}.
\end{theorem}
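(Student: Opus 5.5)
We write $Z_i := \int f(\obsdata\mid S^i,\theta)\pi(\theta\mid S^i)\,d\theta$ for the marginal likelihoods, so that
$$\pi(S^1\mid\obsdata) = \Bigl(1 + Z_2/Z_1 + Z_3/Z_1\Bigr)^{-1}.$$
The plan is to apply Laplace's method to each $Z_i$; Lemma~\ref{lemma:laplace_regular} guarantees the required regularity, using four-times differentiability of the priors. For $i=1,2$ this gives, almost surely,
$$Z_i = f(\obsdata\mid S^i,\hat\theta^i)\,(2\pi)^{3/2}\, n^{-3/2}\,|\hat J_i|^{-1/2}\,\pi(\hat\theta^i\mid S^i)\,\bigl(1+O(n^{-1})\bigr),$$
where $\hat J_i$ is the normalized observed information at the MLE. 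The independence model $S^3$ will be handled separately: I will show $Z_3/Z_1\to 0$ almost surely, because $w^*\neq 0$. Its maximized likelihood is smaller by a factor $\exp(-n\,\mathrm{KL}+o(n))$, where $\mathrm{KL}>0$ is the divergence from the true law to the best independent Gaussian, equal to $-\tfrac12\log(1-\rho^{*2})$. This exponential gap beats the difference in polynomial Laplace factors ($n^{-1}$ versus $n^{-3/2}$). To make this rigorous I would write the log maximized likelihoods explicitly via sample covariances and use the SLLN.

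For the ratio $Z_2/Z_1$, Lemma~\ref{lemma:MLE_nonident} gives $\hat\theta^2=\gamma(\hat\theta^1)$, and Lemma~\ref{lemma:non_ident} then shows that the two maximized likelihoods coincide exactly. Moreover, since $\ell(\theta\mid S^2)$ evaluated at $\gamma(\theta)$ equals $\ell(\theta\mid S^1)$ for every $\theta$, the Hessians are related by the chain rule at the critical point: $\hat J_1 = D\gamma(\hat\theta^1)^\transpose \hat J_2\, D\gamma(\hat\theta^1)$. Gradient terms vanish because we are at a maximizer. Hence $|\hat J_2|^{-1/2}/|\hat J_1|^{-1/2} = |\det D\gamma(\hat\theta^1)|^{-1}$, and
$$\frac{Z_2}{Z_1}\to \frac{\pi(\gamma(\theta^*)\mid S^2)}{\pi(\theta^*\mid S^1)}\,\bigl|\det D\gamma(\theta^*)\bigr|^{-1}$$
almost surely. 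This uses strong consistency of the MLE ($\hat\theta^1\to\theta^*$ a.s., by the SLLN on sample moments) together with continuity of the priors and of $D\gamma$. The stated ratio is exactly the change-of-variables density $\gamma^{-1}\#\pi(\theta^*\mid S^2)=\pi(\gamma(\theta^*)\mid S^2)\,|\det D\gamma(\theta^*)|$, up to the convention for which Jacobian appears. This yields the "moreover" clause.

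It remains to compute $\det D\gamma$ from \eqref{eq:gamma}. Set $s=w^2\tau_2^2+\tau_1^2$. Change coordinates to the covariance entries $(\Sigma_{11},\Sigma_{12},\Sigma_{22})$: the map $\theta^1\mapsto\Sigma$ has Jacobian determinant $\tau_2^2$, while $\theta^2\mapsto\Sigma$ has Jacobian determinant $\tau_1^{\prime 2}=s$. Dividing the two gives the factor $\tau_2^{*2}/(w^{*2}\tau_2^{*2}+\tau_1^{*2})$, matching the statement (with the prior for $S^2$ read at the corresponding point). I will track orientation carefully here; this Jacobian bookkeeping, and making sure the statement's notation $\pi(\theta^*\mid S^2)$ is interpreted at $\gamma(\theta^*)$, is the main fiddly step.

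The genuine analytical obstacle is the uniform validity of the Laplace approximation almost surely. Each marginal likelihood must be dominated by a neighborhood of the MLE with relative error $o(1)$, with tails controlled uniformly on an event of probability one. I will take this from Lemma~\ref{lemma:laplace_regular} (the Kass–Tierney–Kadane conditions). Once it is in hand, the limit follows by combining the three ratios.
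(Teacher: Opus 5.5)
Your route is the same as the paper's. You Laplace-expand all three marginal likelihoods. You use that $S^1$ and $S^2$ share the maximized likelihood and the dimension, so only the curvature and prior terms survive in $Z_2/Z_1$. You relate the curvatures through the Jacobian of $\gamma$. You dispose of $S^3$ via the exponential gap $\tfrac12\log(1+w^{*2}\tstwo/\tsone)$.

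Several of your steps are correct and in places cleaner than the paper's:
\begin{itemize}
\item Your finite-$n$ identity $\hat J_1 = D\gamma(\hat\theta^1)^\transpose \hat J_2\, D\gamma(\hat\theta^1)$ holds because the gradient vanishes at the maximizer. This is a slightly cleaner justification than the paper's appeal to the transformation rule for expected Fisher information.
\item Your covariance-coordinates computation $|\det D\gamma| = \tau_2^2/(w^2\tau_2^2+\tau_1^2)$ is correct.
\item You are right that $\pi(\theta^*\mid S^2)$ in the statement must be read as $\pi(\gamma(\theta^*)\mid S^2)$, which is how the paper's proof uses it.
\end{itemize}

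There is, however, an inverted exponent at the key step. From $\det\hat J_1 = \det\hat J_2\,(\det D\gamma)^2$ one gets
\[
\frac{|\hat J_2|^{-1/2}}{|\hat J_1|^{-1/2}} = \Bigl(\frac{\det \hat J_1}{\det\hat J_2}\Bigr)^{1/2} = |\det D\gamma(\hat\theta^1)|,
\]
not $|\det D\gamma(\hat\theta^1)|^{-1}$.

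As written, your limit for $Z_2/Z_1$ therefore carries the factor $(w^{*2}\tstwo+\tsone)/\tstwo$. That is the reciprocal of the factor in the theorem. It also contradicts the push-forward density $\pi(\gamma(\theta)\mid S^2)\,|\det D\gamma(\theta)|$ that you write a line later. Your final claim that the Jacobian computation "matches the statement" only works because the inverted factor from the previous step is silently dropped.

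This cannot be absorbed into "the convention for which Jacobian appears" or into orientation. Signs vanish under absolute values. Once $\gamma^{-1}\#\pi(\cdot\mid S^2)$ is the law of $\gamma^{-1}(\vartheta)$ with $\vartheta\sim\pi(\cdot\mid S^2)$, there is no freedom left, and the power of the Jacobian fixes the numerical value of the limit.

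With the exponent corrected,
\[
\frac{Z_2}{Z_1}\xrightarrow{a.s.} |\det D\gamma(\theta^*)|\,\frac{\pi(\gamma(\theta^*)\mid S^2)}{\pi(\theta^*\mid S^1)} = \frac{\tstwo}{w^{*2}\tstwo+\tsone}\cdot\frac{\pi(\gamma(\theta^*)\mid S^2)}{\pi(\theta^*\mid S^1)}.
\]
Together with $Z_3/Z_1\to 0$, this gives exactly the stated limit and the push-forward identity, as in the paper.
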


\begin{remark}
    Assume the conditions in Theorem~\ref{thm:nonident_mec} hold. If $\theta^*$ are the true parameters under $S^1$, then $\gamma(\theta^*)$ are the corresponding parameters under $S^2$ that yield the same likelihood. If $S^2$ is considered the true generating mechanism, one can show similarly that as $n \to \infty$
\begin{align*}
    \pi(S_2 \mid \obsdata) &\xrightarrow{a.s.} \frac{1}{1 + \frac{w^{*2}\tau_2^{*2} + \tau_1^{*2}}{\tau_2^{*2}} \cdot \frac{\pi(\theta^* \mid S_1)}{\pi(\theta^* \mid S_2)}},
\end{align*}
which implies that $\pi(S^1\mid \obsdata) +\pi(S^2 \mid \obsdata) \xrightarrow{a.s.}1.$
\end{remark} 

\begin{remark}
    Although structures $S^1$ and $S^2$ are observationally non-identifiable, the posterior probability of structure $S^1$ does not necessarily converge to $1/2$. This might suggest the existence of a decision rule for detecting the true generating structure; for instance, one could select $S^1$ if its posterior probability exceeds $1/2$, and $S^2$ otherwise. At first glance, this appears to contradict the non-identifiability of the model. However, we note that the true generating parameters $\theta^*$ are unknown to the observer. Consequently, without knowledge of the specific parameter values, one cannot determine whether the threshold for the posterior ratio should be set above or below $1/2$ to correctly identify the true graph. 
\end{remark}

\begin{remark}
    This result also shows that the posterior odds ratio converges to a constant value which is dependent only on the priors placed on the two structures $S^1$ and $S^2$. It also confirms the result of \citet{dhir2024bivariatecausaldiscoveryusing} which shows theoretically that if the posterior of two models are equal, then the selected priors must satisfy the following relation
$\pi(\theta^*\mid S^{1*}) = \gamma^{-1}\#\pi(\theta^*\mid S^2)$. Moreover, we note that this ratio is also central to the derivation of the Bayesian Gaussian equivalent (BGe) score in the case of two nodes which was studied in \citet{geiger1998characterization}. In fact, they show that if the global parameter independence (or the independent causal mechanism -- ICM) assumption holds, i.e. the two priors factorise as follows $\pi(w, \tau_1^2, \tau_2^2 \mid S^1) = \pi(w, \tau_1^2\mid S^1)\pi(\tau_2^2\mid S_1)$ and $\pi(w, \tau_1^2, \tau_2^2 \mid S^2) = \pi(w, \tau_2^2\mid S^2)\pi(\tau_1^2\mid S_2)$, then the ratio is one

$$\frac{\gamma^{-1}\#\pi(\theta\mid S^2)}{\pi(\theta\mid S^1)} =   1$$
if and only if $[w, \tau_1^2, \tau_2^2]^\transpose$ follow a normal-Wishart prior under the two models $S^1$ and $S^2$. This case is further discussed in Section \ref{subsec:bge_models}.
\end{remark}

Next, we consider the scenario where the true generating mechanism is the independence model $S^3$. In this case, the following theorem demonstrates that the posterior probability of the true model concentrates to $1$ asymptotically. Notably, however, this convergence is not almost sure (as observed in the connected cases), but is instead characterized by a rate of $O_p(n^{-1/2})$, where $O_p(\cdot)$ denotes boundedness in probability. This suggests a significant slow-down in concentration compared to the exponential rates typical of identifiable causal directions derived in \citet{lungu2025bayesiancausaldiscoveryposterior}or in the next sections.

\begin{theorem}
    \label{thm:non_ident_S3}
    Assume the conditions from Theorem \ref{thm:nonident_mec}. If the true generating model is $\mathcal{M}^3 = \{\theta^*, S^{3*}\}$ with $\theta^* = [0, \tsone, \tstwo]^\transpose$, as $n\to\infty$, we have,

    $$\sqrt{n}(1-\pi(S^{3*}\mid \obsdata)) = O_p(1).$$
\end{theorem}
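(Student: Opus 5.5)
We write
\[
1-\pi(S^{3}\mid\obsdata)=\frac{B_{13}+B_{23}}{1+B_{13}+B_{23}}\le B_{13}+B_{23},
\qquad B_{j3}=\frac{\int f(\obsdata\mid S^j,\theta)\pi(\theta\mid S^j)\,d\theta}{\int f(\obsdata\mid S^3,\theta)\pi(\theta\mid S^3)\,d\theta},
\]
so it suffices to show $\sqrt{n}\,B_{13}=O_p(1)$ and $\sqrt{n}\,B_{23}=O_p(1)$. By symmetry (swap the roles of the two coordinates) we only need to treat $B_{13}$.

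The key structural fact is that $\mathcal{M}^3$ is nested in $\mathcal{M}^1$. Under $S^1$ the parameter is $(w,\tau_1^2,\tau_2^2)\in\Theta$, which has dimension $3$. Under $S^3$ the free parameters are $(\tau_1^2,\tau_2^2)$, so the model is effectively of dimension $2$ and coincides with $S^1$ restricted to $w=0$. The true value $\theta^*=[0,\tsone,\tstwo]^\transpose$ is therefore an interior point of $\Theta$ under $S^1$, and the truth lies in both models.

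We apply Laplace's method to each marginal likelihood, using the regularity established in Lemma~\ref{lemma:laplace_regular} together with the four-times differentiability of the priors. This gives
\[
\log m_j(\obsdata)=\ell_n(\hat\theta^j\mid S^j)-\tfrac{d_j}{2}\log n+\log\pi(\hat\theta^j\mid S^j)+\tfrac{d_j}{2}\log(2\pi)-\tfrac12\log\det I_j(\hat\theta^j)+o_p(1),
\]
with $d_1=3$ and $d_3=2$. Here $I_j$ is the per-observation Fisher information, which is positive definite at $\theta^*$. Since $\hat\theta^j\to\theta^*$ almost surely, the prior and determinant terms converge to finite constants; this uses $\pi(\theta^*\mid S^j)>0$, which I would need to assume or read off from the regularity lemma. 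Subtracting the two expansions gives
\[
\log B_{13}=-\tfrac12\log n+\bigl[\ell_n(\hat\theta^1\mid S^1)-\ell_n(\hat\theta^3\mid S^3)\bigr]+O_p(1).
\]

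It remains to control the likelihood-ratio term. This is the step I expect to be the main obstacle, because it must be shown to be $O_p(1)$ rather than merely $o(n)$. I would use the explicit MLEs from Lemma~\ref{lemma:MLE_nonident}. The $S^1$ fit regresses $X(2)$ on $X(1)$ with no intercept and gives $\hat w=\sum x_i(1)x_i(2)/\sum x_i(1)^2$. The $S^3$ fit uses the sample second moments of each coordinate. Profiling out the variances gives
\[
2\bigl[\ell_n(\hat\theta^1)-\ell_n(\hat\theta^3)\bigr]=-n\log\bigl(1-\hat r^2\bigr),
\qquad \hat r=\frac{\sum x_i(1)x_i(2)}{\sqrt{\sum x_i(1)^2\sum x_i(2)^2}}.
\]
When $w^*=0$ the coordinates are independent, so by the CLT $\sqrt n\,\hat r\Rightarrow N(0,1)$. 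Hence $-n\log(1-\hat r^2)=n\hat r^2+o_p(1)\Rightarrow\chi^2_1$, which is Wilks' theorem in this case, and so the term is $O_p(1)$.

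Putting these together, $\sqrt n\,B_{13}=\exp(O_p(1))=O_p(1)$. The same argument gives $\sqrt n\,B_{23}=O_p(1)$, and the bound $\sqrt n\,(1-\pi(S^3\mid\obsdata))\le\sqrt n\,(B_{13}+B_{23})$ yields the claim. As a check that the rate is sharp, $\sqrt n\,B_{13}$ converges in distribution to a nondegenerate limit of the form $c\,e^{\chi^2_1/2}$. This explains why the convergence holds only in probability and not almost surely.
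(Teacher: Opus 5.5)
Your proposal is correct and follows essentially the same route as the paper. The Laplace expansions turn the dimension gap $d_1-d_3=1$ into the $-\tfrac12\log n$ Occam factor, and the explicit likelihood-ratio statistic $-n\log(1-\hat r^2)\Rightarrow\chi^2_1$ shows the remaining term is $O_p(1)$; your sign on that statistic is the right one (the paper writes $n\log(1-r_n^2)$), and finishing with the bound $1-\pi(S^{3}\mid\obsdata)\le B_{13}+B_{23}$ is slightly cleaner than the paper's Slutsky step.
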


\begin{remark}
\label{remark:log_posterior_chi}
    The above result indicates that the posterior recovers the true generating structure asymptotically. Furthermore, one can derive the asymptotic distribution of the log-posterior odds from our proof. For $S^i$ with $i \in \{1,2\}$, then
    \begin{equation}
        \label{eq:conv_to_chi_sq}
        2\log\left(\sqrt{n}\frac{\pi(S^i \mid \obsdata)}{\pi(S^{3*} \mid \obsdata) }\right) + \log \frac{\pi(\theta^* \mid S^i)}{\pi(\theta^* \mid S^{3*})} \xrightarrow{d} \chi_1^2,
    \end{equation}
    where $\theta^* = [0, \tsone, \tstwo]^\transpose$, $\xrightarrow{d}$ denotes convergence in distribution and $\chi_1^2$ is a chi-squared distribution with one degree of freedom.
\end{remark}

\begin{remark}
    The aforementioned results extend naturally to the general $d$-node setting. In the case of the empty DAG (the independence model), the posterior concentrates on the true structure at a rate of $O_p(n^{-1/2})$. For all other cases, the posterior distribution concentrates on the MEC of the true generating structure. Notably, the posterior mass is not necessarily distributed uniformly within the MEC; rather, the relative weights of equivalent structures depend on the specific prior densities considered.
\end{remark}

\section{Identifiability via Interventional Data}
\label{sec:ident}
As established in the previous section, the posterior distribution over the model space fails to concentrate on the true generating structure when restricting observations to the observational regime, owing to the Markov equivalence of structures $S^1$ and $S^2$. To resolve this non-identifiability problem, we extend the data setting to include interventional data.

We assume access to a heterogeneous dataset consisting of an observational component, $\obsdata := (X_i)_{1\leq i \leq n} $ with $X\stackrel{\text{i.i.d.}}{\sim} P_{m(S,\theta)}$, and an interventional component, $\intdata := (Y_j)_{1\leq j \leq m}$. To formalize the latter, we use the $do(\cdot)$ operator introduced in \cite{pearl2009causality}. An intervention $do(X(j) = y)$ corresponds to a structural modification of the data-generating process wherein the value of node $j$ is fixed to $y$, effectively removing all incoming causal edges to $j$ while preserving the structural equations of its descendants.

We consider a fixed interventional design targeting the second variable. Let the intervention be defined as $do(X(2) = y)$ for a fixed $y \in \mathbb{R}$. The samples in the interventional dataset are denoted by $Y_i = [Y_i(1), y]^\transpose$ for $i = 1, \dots, m$. The marginal distribution of the non-intervened variable $Y(1)$ allows us to discriminate between the two connected causal structures,  $S^1$ and $S^2$.

If the true generating mechanism is $S^1$ (implying the causal direction $X(2) \to X(1)$), the intervention on $X(2)$ propagates to $X(1)$. Specifically, for parameters $\theta^* = [w^*, \tsone, \tstwo]^\transpose$:
\begin{equation}
    Y(1) \mid \{ do(Y(2) = y), \theta^*, S^1\} \sim N(w^*y, \tsone).
\end{equation}
Conversely, if the true mechanism is $S^2$ (where $X(1) \to X(2)$) or the independence model $S^3$, the intervention on $X(2)$ breaks the causal dependency (in the case of $S^2$) or no dependency exists (in $S^3$). In both cases, the distribution of $Y(1)$ remains governed by its marginal parameters
\begin{equation}
    Y(1) \mid \{do(Y(2) = y), \theta^*,  S^i\} \sim N(0, \tsone),
\end{equation}
for $i \in \{2, 3\}$. This distributional difference renders $S^1$ and $S^2$ distinguishable. To analyze the asymptotic behavior of the posterior $\pi(S \mid \mixdata)$ with $\mixdata :=\obsdata\cup\intdata$, we characterize the data regime by the fraction of observational samples.

To formalize this setting, let $N$ be the total number of samples, with $n_N$ observational and $m_N$ interventional samples. Assuming both sequences, $\{n_N\}_N$ and $\{m_N\}_N$, increase with $N$, we define the asymptotic observational ratio $\eta \in (0,1)$ as
\begin{equation}
    \label{eq:def_eta}
    \eta_N := \frac{n_N}{N}, \quad \text{where} \quad \lim_{N \to \infty} \eta_N = \eta.
\end{equation}

The following analysis investigates the concentration rate of the posterior as a function of the mixing parameter $\eta$ and the true generating mechanism. First, we consider the connected cases, i.e. $S^1$ and $S_2$, and show that in this setting, the posterior concentrates almost surely to $1$ at an exponential rate. 

\begin{theorem}
    \label{thm:ident_S1}
    Let $\obsdata = (X_i)_{1\leq i \leq n_N}$ be $n_N$ i.i.d. samples generated by \eqref{eq:SCM_vector} and $\intdata = (Y_i)_{1\leq i\leq m_N}$ be $m_N$ i.i.d. samples generated by interventional model, when intervened upon the second node, $\cdot\mid do(Y(2) = y )$. Assume that the priors $\pi(\theta\mid S)$ are four-times differentiable for $S \in \mathcal{S}$. If the true generating model is $\mathcal{M}^1 = \{\theta^*, S^{1*}\}$ with $\theta^*:= [w^*, \tsone, \tstwo]^\transpose$, as $N \to \infty$, we have,

    \[
        \frac{1}{N} \log \left( \frac{1}{\pi(S^{1*} \mid \mixdata)} - 1 \right) \xrightarrow{a.s.} -D_{12}(\eta),
    \]
    with 
\begin{equation}
    \label{eq:D_12_def}
    D_{12}(\eta) := \frac{1}{2} \log \left( \frac{\eta \sigma_{1,X}^2 + (1-\eta) \sigma_{1,Y}^2}{(\sigma_{1,X}^2)^\eta (\tsone)^{\bar{\eta}}} \right)
\end{equation}
    and  $\sigma_{1,X}^2:=w^{*2}\tstwo+\tsone,$ $\sigma_{1, Y}^2 := w^{*2}y^2+\tsone$.

    Conversely, if the true generating model is $\mathcal{M}^2 = \{\theta^*, S^{2*}\}, $ then the posterior concentrates to 1, but at a different rate, i.e. as $N \to \infty$

    \[
        \frac{1}{N} \log \left( \frac{1}{\pi(S^{2*} \mid \mixdata)} - 1 \right) \xrightarrow{a.s.} -D_{21}(\eta),
    \]
    with 
    \begin{equation}
        \label{eq:D_21_def}
        D_{21}(\eta) = \frac{1}{2}\log \left( 1- \frac{\eta^2 w^{*2}\tau_1^{*2}}{\eta\sigma_{2, Y}^2 +\bar{\eta}y^2}\right) \left(\frac{\sigma_{2, Y}^2}{\tau_2^{*2}} \right)^ \frac{\eta}{2},
    \end{equation}
with $\sigma_{2, Y}^2 = w^{*2}\tsone+\tstwo$.
\end{theorem}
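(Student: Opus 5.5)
We apply Laplace's method to each of the three marginal likelihoods $\int f(\mixdata\mid S^j,\theta)\pi(\theta\mid S^j)d\theta$. Each model's mixed-data likelihood is a product of $n_N$ observational Gaussian densities and $m_N$ interventional densities for $Y(1)$. Under $S^1$ the latter are $N(wy,\tau_1^2)$; under $S^2$ and $S^3$ they are $N(0,\tau_1^2)$. Lemma~\ref{lemma:laplace_regular}, adapted to the mixed likelihood, gives
\[
\log \int f(\mixdata\mid S^j,\theta)\pi(\theta\mid S^j)d\theta = \ell_N(\hat\theta_N^j\mid S^j) + O(\log N)\quad\text{a.s.},
\]
where $\hat\theta^j_N$ is the mixed-data MLE. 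Since
\[
\frac{1}{\pi(S^{1*}\mid\mixdata)}-1=\sum_{j\neq 1}\frac{\text{marg}_j}{\text{marg}_1},
\]
we get
\[
\frac{1}{N}\log\Big(\frac{1}{\pi(S^{1*}\mid\mixdata)}-1\Big)\to -\min_{j\ne 1}\lim_N \frac1N\big[\ell_N(\hat\theta^1_N)-\ell_N(\hat\theta^j_N)\big].
\]
The $O(\log N)/N$ Laplace terms vanish, and the log of a sum of two exponentials is governed by the slower one. The proof therefore reduces to computing almost-sure limits of normalized profile log-likelihoods.

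\textbf{Step 1: closed-form MLEs.} All models are Gaussian, so we compute the MLEs in closed form. Under $S^2$ and $S^3$, $\tau_1^2$ is estimated from the pooled sample of $X_i(1)$ and $Y_i(1)$. Its limit by the SLLN is $\eta\sigma_{1,X}^2+(1-\eta)\sigma_{1,Y}^2$ when $S^{1}$ is true, since then $E[Y(1)^2]=w^{*2}y^2+\tsone$. The remaining parameters ($w,\tau_2^2$ under $S^2$, or $\tau_2^2$ under $S^3$) use only the observational data. Under $S^1$ the true model is correctly specified and $\hat\theta^1_N\to\theta^*$. The normalized maximized log-likelihoods then converge to
\[
-\tfrac12\big[\eta\log\hat\sigma^2_{\text{obs}}+\ldots\big]-\tfrac12,
\]
taking the usual $-\tfrac12\log(2\pi e\,\cdot)$ form for each fitted variance weighted by its sample fraction.

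\textbf{Step 2: compare alternatives (true model $S^1$).} Under $S^2$ the observational fit reproduces the observational joint likelihood exactly, by Lemma~\ref{lemma:MLE_nonident} and the map $\gamma$. The only discrepancy is therefore in the $X(1)$-marginal variance, where $S^1$ uses $\sigma_{1,X}^2$ for observational and $\tsone$ for interventional samples, while $S^2$ must use a single pooled variance. This yields
\[
\tfrac12\log\frac{\eta\sigma_{1,X}^2+\bar\eta\sigma_{1,Y}^2}{(\sigma_{1,X}^2)^\eta(\tsone)^{\bar\eta}}=D_{12}(\eta),
\]
which is nonnegative by concavity of $\log$. For $S^3$ the gap is larger by an extra $\tfrac{\eta}{2}\log(1/(1-\rho^2))>0$ term, so $S^2$ is the dominating alternative.

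\textbf{Step 3: true model $S^2$.} Here we intervene on the effect node. Under $S^1$ the interventional data now involve the regression $Y(1)\approx wy$, and the true $Y(1)\sim N(0,\tsone)$. The $S^1$ MLE for $(w,\tau_1^2)$ must jointly fit the observational regression of $X(1)$ on $X(2)$ and the interventional data. This gives a weighted least-squares problem whose limiting residual variance produces the factor $1-\eta^2w^{*2}\tsone/(\eta\sigma^2_{2,Y}+\bar\eta y^2)$, with the observational $X(2)$ marginal mismatch supplying $(\sigma_{2,Y}^2/\tstwo)^{\eta/2}$. We again verify that $S^3$ gives a strictly larger gap.

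The main obstacle is Step 3: solving the coupled pooled MLE for $(w,\tau_1^2)$ under the misspecified $S^1$ and simplifying its almost-sure limit to the stated $D_{21}(\eta)$. Here we would first write $\hat w$ as a ratio of pooled cross-moments and apply the SLLN. A second concern is justifying the Laplace expansion uniformly when the MLE of a misspecified model converges to a pseudo-true interior point. For that we would invoke the regularity in Lemma~\ref{lemma:laplace_regular} at the pseudo-true parameter, where the prior is positive and the Hessian is nondegenerate.
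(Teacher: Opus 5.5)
Your proposal is correct and follows essentially the paper's route. A Laplace-type expansion reduces $\frac{1}{N}\log\bigl(1/\pi(S^{1*}\mid\mixdata)-1\bigr)$ to normalized differences of maximized log-likelihoods. The closed-form mixed-data MLEs converge a.s. to true or pseudo-true values by the SLLN, and comparing the two alternatives shows that $S^2$ (respectively $S^1$) is the closer one, giving the exponents $D_{12}$ and $D_{21}$.

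The only difference is presentational. The paper writes these limits as $\eta D(P^X\|\cdot)+\bar{\eta}D(P^Y\|\cdot)$, i.e. weighted relative entropies at the pseudo-true parameters, and this is exactly what your profile-likelihood computation evaluates.
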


The next two lemmas describe the properties of the $D_{12}(\eta)$ and $D_{21}(\eta)$ exponents. The implications of these results are further discussed in the the next section.

\begin{lemma}
\label{lemma:concavity_D_12}
The exponent $D_{12}(\eta)$ defined in \eqref{eq:D_12_def} is positive and strictly concave for $\eta \in (0,1)$ if $\tstwo \neq y^2 >0$. Moreover, if 

\begin{equation}
    \label{eq:cond_D_12}
    \frac{w^{*2}(\tstwo-y^2)}{w^{*2}y^2+\tsone} > \log \left( 1+\frac{w^{*2}\tstwo}{\tsone} \right),
\end{equation}
$D_{12}(\eta)$ achieves a \textit{unique} maximum for some $\eta^* \in (0,1)$. Otherwise, $D_{12}(\eta)$ is strictly decreasing for $\eta \in (0,1)$.
\end{lemma}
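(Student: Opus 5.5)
The plan is to treat $D_{12}$ as an explicit one-variable function and read everything off its first two derivatives. Set $a := \sigma_{1,X}^2 = w^{*2}\tstwo+\tsone$, $b := \sigma_{1,Y}^2 = w^{*2}y^2+\tsone$ and $t := \tsone$, and write $\bar\eta = 1-\eta$. Then
\[
D_{12}(\eta) = \tfrac12\bigl[\log(\eta a + \bar\eta b) - \eta\log a - \bar\eta \log t\bigr].
\]
I assume $w^*\neq 0$ throughout, as in the connected setting. Otherwise $a=b$ and the function is degenerate.

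\emph{Concavity.} The last two terms are affine in $\eta$. The first is the logarithm of the affine function $\eta a+\bar\eta b$, which stays strictly positive on $[0,1]$. Its second derivative is
\[
-\frac{(a-b)^2}{(\eta a+\bar\eta b)^2}.
\]
This is strictly negative exactly when $a\neq b$, i.e.\ when $w^{*2}(\tstwo-y^2)\neq 0$, and this is where the hypothesis $\tstwo\neq y^2$ enters. Hence $D_{12}$ is strictly concave on $[0,1]$.

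\emph{Positivity.} Evaluate the endpoints. At $\eta=1$ we get $D_{12}(1)=\tfrac12(\log a-\log a)=0$. At $\eta=0$ we get $D_{12}(0)=\tfrac12\log(b/t)>0$, since $y^2>0$ and $w^*\neq0$ give $b>t$. A concave function on $[0,1]$ lies above its chord, and the chord here is $\bar\eta D_{12}(0)>0$ for $\eta<1$. So $D_{12}>0$ on $(0,1)$.

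\emph{Location of the maximum.} Differentiate:
\[
D_{12}'(\eta)=\tfrac12\Bigl[\frac{a-b}{\eta a+\bar\eta b}-\log\frac{a}{t}\Bigr].
\]
By strict concavity this is strictly decreasing. So $D_{12}$ has a unique interior maximiser if and only if $D_{12}'(0)>0>D_{12}'(1)$.

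The left condition reads $(a-b)/b>\log(a/t)$. Substituting the definitions gives exactly \eqref{eq:cond_D_12}.

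The right condition, $(a-b)/a<\log(a/t)$, should hold automatically, and verifying this is the step needing a small argument. If $a\le b$, the left side is nonpositive while $\log(a/t)>0$, since $a>t$. If $a>b$, then $b>t$ gives
\[
\frac{a-b}{a}<\frac{a-t}{a}=1-\frac{t}{a}<\log\frac{a}{t},
\]
using $1-1/x<\log x$ for $x>1$.

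Consequently, under \eqref{eq:cond_D_12} the intermediate value theorem gives a root $\eta^*\in(0,1)$ of $D_{12}'$, and strict monotonicity of $D_{12}'$ makes it the unique maximiser. If \eqref{eq:cond_D_12} fails, then $D_{12}'(0)\le 0$. Since $D_{12}'$ is strictly decreasing, $D_{12}'<0$ on $(0,1)$, and $D_{12}$ is strictly decreasing there.
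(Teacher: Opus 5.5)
Your proof is correct and follows essentially the same route as the paper: compute $D_{12}'$ and $D_{12}''$, use $\sigma_{1,X}^2\neq\sigma_{1,Y}^2$ for strict concavity, evaluate the endpoint values $D_{12}(0)=\tfrac12\log(1+w^{*2}y^2/\tau_1^{*2})>0$ and $D_{12}(1)=0$, and read the dichotomy off the sign of $D_{12}'(0)$. The only differences are that you spell out what the paper leaves implicit: positivity via the chord bound, the check $D_{12}'(1)<0$ (which also follows directly from concavity, since $D_{12}'(1)\le D_{12}(1)-D_{12}(0)<0$), and the tacit assumption $w^*\neq 0$.
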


\begin{lemma}
    \label{lemma:D_21_concavity}
    The exponent $D_{21}(\eta)$ defined in \eqref{eq:D_21_def} is positive and strictly concave for $\eta \in (0,1)$. $D_{21}(\eta)$ achieves a unique maximum for $\eta^* \in (0,1)$.
\end{lemma}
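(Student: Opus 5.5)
The plan is to avoid working directly with the closed form \eqref{eq:D_21_def} at first, and instead use the variational form of the exponent that comes out of the proof of Theorem~\ref{thm:ident_S1}. There, the log posterior odds of $S^1$ against the true $S^{2*}$, divided by $N$, are driven by the maximised log-likelihood ratio; the Laplace corrections are only $O(\log N)$, which is negligible. The MLE for $S^1$ on $\mixdata$ converges to the pseudo-true parameter, which minimises a weighted Kullback--Leibler criterion. So I would first record
\[
D_{21}(\eta)=\inf_{\theta\in\Theta} G(\eta,\theta),\qquad G(\eta,\theta):=\eta\,\mathrm{KL}\big(P_{m(S^2,\theta^*)}\,\big\|\,P_{m(S^1,\theta)}\big)+(1-\eta)\,\mathrm{KL}\big(Q^{*}\,\big\|\,Q_{\theta}\big),
\]
where $Q^*=N(0,\tsone)$ is the true interventional law of $Y(1)$ and $Q_\theta=N(wy,\tau_1^2)$ is its law under $S^1$. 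The independence model $S^3$ cannot fit the observational part when $w^*\neq0$, so its exponent is at least as large and it does not affect the leading rate. I would then check that carrying out the Gaussian minimisation explicitly reproduces \eqref{eq:D_21_def}. This is a routine weighted least-squares computation: $X(2)$ has variance $\sigma_{2,Y}^2$, and pooling the regression of $X(1)$ on $X(2)$ with the interventional points at the single design value $y$ produces exactly the term $\eta\sigma_{2,Y}^2+\bar\eta y^2$.

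\textbf{Concavity and positivity.} For each fixed $\theta$, $G(\eta,\theta)$ is affine in $\eta$. An infimum of affine functions is concave, so $D_{21}$ is concave on $[0,1]$. For positivity at an interior $\eta$, $G(\eta,\theta)=0$ would require both KL terms to vanish. The first forces $\theta=\gamma^{-1}(\theta^*)$ in the sense of Lemma~\ref{lemma:non_ident}, and in particular a nonzero weight $w=w^*\tsone/\sigma_{2,Y}^2$. The second forces $wy=0$. These are incompatible when $y\neq0$ and $w^*\neq0$. A compactness and coercivity argument then shows the infimum is attained and is therefore strictly positive; alternatively, positivity can be read off directly from the closed form.

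\textbf{Strict concavity and the interior maximum.} At the endpoints, $D_{21}(1)=0$ because the models are observationally non-identifiable (take $\theta=\gamma^{-1}(\theta^*)$). Also $D_{21}(0)=0$, because with purely interventional data $S^1$ fits $Y(1)\sim N(0,\tsone)$ exactly with $w=0$ and $\tau_1^2=\tsone$. A function that is concave, strictly positive inside $(0,1)$, and zero at both endpoints must have its maximum in the interior. Uniqueness of the maximiser follows once strict concavity is established.

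The main obstacle is strict concavity. The infimum-of-affine argument only gives weak concavity. To upgrade it, I would use the envelope theorem: $D_{21}'(\eta)=\partial_\eta G(\eta,\theta_\eta)$, where $\theta_\eta$ is the unique minimiser. Strict concavity then reduces to showing that $\eta\mapsto\theta_\eta$ is non-constant on every subinterval, for instance because the pooled regression weight $\eta w^*\tsone/(\eta\sigma_{2,Y}^2+\bar\eta y^2)$ is strictly monotone in $\eta$. If that argument becomes delicate, the fallback is to differentiate the explicit formula twice and sign the second derivative directly, writing it as a negative multiple of a squared term.
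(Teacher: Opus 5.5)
Your proposal is correct, but it takes a genuinely different route from the paper.

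\textbf{The paper's route.} The paper never passes through a variational form. It works with $D_{21}(\eta)=\frac12\log(1-u(\eta))+\frac{\eta}{2}\log(\sigma_{2,Y}^2/\tstwo)$, where $u(\eta)=\eta^2w^{*2}\tsone/(\eta\sigma_{2,Y}^2+\bar\eta y^2)$. It shows $u''(\eta)=2w^{*2}\tsone y^4/(\eta\sigma_{2,Y}^2+\bar\eta y^2)^3>0$, composes the strictly convex $u$ with the concave decreasing map $u\mapsto\log(1-u)$, and adds the linear term. It then uses $D_{21}(0^+)=D_{21}(1^-)=0$ to place the maximiser inside $(0,1)$. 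Your fallback is essentially this argument.

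\textbf{Your route.} Your main argument rests on $D_{21}(\eta)=\min_\theta G(\eta,\theta)$, and that identity does hold. The minimiser is the paper's pseudo-true point $w_\eta=\eta w^*\tsone/(\eta\sigma_{2,Y}^2+\bar\eta y^2)$, $\tau_{1,\eta}^2=\tsone(1-u(\eta))$, $\tau_{2,\eta}^2=\sigma_{2,Y}^2$. Substituting it gives exactly the expression above, which is the intended reading of \eqref{eq:D_21_def}.

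This buys you several things:
\begin{itemize}
\item concavity for free;
\item a direct proof of positivity, which the paper gets only implicitly from strict concavity and the vanishing endpoints;
\item a transparent explanation of why $D_{21}$ vanishes at $\eta=0$ and $\eta=1$;
\item an argument that transfers verbatim to $D_{12}$.
\end{itemize}

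\textbf{The price is the strictness step.} It first needs uniqueness of the minimiser for $\eta\in(0,1)$. This is true: after optimising the variances, $G$ is $\frac12\log$ of a strictly convex quadratic in $w$ plus constants.

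The step is also cleaner without the envelope theorem. Suppose $D_{21}$ were affine on some $[\eta_1,\eta_2]$, and take an interior $\eta_0$. The affine majorant $G(\cdot,\theta_{\eta_0})$ of $D_{21}$ touches it at an interior point of that segment, hence coincides with it there. So $\theta_{\eta_0}$ would be the unique minimiser for all $\eta\in[\eta_1,\eta_2]$. This contradicts strict monotonicity of $w_\eta$, which holds since $y\neq0$ and $w^*\neq0$; both proofs tacitly need these assumptions.

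The paper's calculation is shorter once the closed form is known; yours explains where the concavity comes from.
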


Analogous to Theorem~\ref{thm:non_ident_S3}, the posterior achieves a stochastic polynomial concentration rate of $O_p(N^{-1/2})$. We formalize this below.

\begin{theorem}
\label{thm:ident_S3}
    Assume the conditions in Theorem \ref{thm:ident_S1} hold. If the true generating model is $\mathcal{M}^3 = \{\theta^*, S^{3*}\}$ with $\theta^*:= [0, \tsone, \tstwo]^\transpose$, as $N \to \infty$, we have,
    $$\sqrt{N}(1-\pi(S^{3*}\mid \mixdata)) = O_p(1).$$
\end{theorem}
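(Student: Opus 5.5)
We would follow the template of the proof of Theorem~\ref{thm:non_ident_S3}, working with the posterior odds
\[
\frac{1-\pi(S^{3*}\mid\mixdata)}{\pi(S^{3*}\mid\mixdata)} = \sum_{i=1}^{2}\frac{\int f(\mixdata\mid S^i,\theta)\pi(\theta\mid S^i)d\theta}{\int f(\mixdata\mid S^{3*},\theta)\pi(\theta\mid S^{3*})d\theta}.
\]
Since $1-\pi(S^{3*}\mid\mixdata)\le$ this sum, it suffices to show that each ratio is $O_p(N^{-1/2})$.

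\textbf{Step 1: Laplace expansions.} The mixed likelihood factorises as $\prod_{i\le n_N} f(X_i\mid\theta,S)\prod_{j\le m_N} g(Y_j(1)\mid\theta,S)$, where $g$ is the interventional density of $Y(1)$. This density is $N(wy,\tau_1^2)$ under $S^1$ and $N(0,\tau_1^2)$ under $S^2,S^3$. Dividing by $N$, the log-likelihood converges a.s. to
\[
\eta\,\mathbb{E}\log f+(1-\eta)\,\mathbb{E}\log g .
\]
Its Fisher information is $\eta I_{\text{obs}}+(1-\eta)I_{\text{int}}$, which is positive definite at $\theta^*=[0,\tsone,\tstwo]$ under each structure, already through $I_{\text{obs}}$ since $\eta>0$. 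The regularity argument of Lemma~\ref{lemma:laplace_regular} therefore carries over, with the average log-likelihood replaced by this $\eta_N$-weighted mixture. Because $\theta^*$ (with $w=0$) lies in all three models, every MLE $\hat\theta^i_N$ is consistent for $\theta^*$. Laplace's method gives, for each $i$,
\[
\log\int f\pi = \ell_N(\hat\theta^i\mid S^i)-\tfrac{d_i}{2}\log N+\log\pi(\theta^*\mid S^i)+\tfrac{d_i}{2}\log 2\pi-\tfrac12\log\det \bar I_i+o_p(1).
\]
Here $d_1=d_2=3$ and $d_3=2$.

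\textbf{Step 2: likelihood ratios.} Each log Bayes factor $S^i$ vs $S^{3*}$ then becomes
\[
\big[\ell_N(\hat\theta^i\mid S^i)-\ell_N(\hat\theta^3\mid S^3)\big]-\tfrac12\log N+O_p(1).
\]
$S^3$ is nested in $S^i$ via $w=0$, and $\theta^*$ is interior in $w$, so the bracket is a log-likelihood ratio statistic for one extra parameter. By Wilks' theorem, applied to independent non-identical observations with the combined information matrix (or via a direct quadratic expansion around $\hat\theta^3$ using the score CLT), twice the bracket is $O_p(1)$, in fact asymptotically $\chi^2_1$. Hence each Bayes factor equals $N^{-1/2}e^{O_p(1)}$, and the sum is $O_p(N^{-1/2})$.

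\textbf{Step 3: the main obstacle.} The $S^1$ interventional component has mean $wy$ and variance $\tau_1^2$. Its scores at $w=0$ must be combined with the observational ones, and we need the score CLT and uniform convergence of the Hessian under the changing proportion $\eta_N\to\eta$. We would handle this by writing the total score as the sum of two independent i.i.d.\ sums and normalising by $\sqrt N$. Each sum obeys a CLT, and $\eta_N\to\eta$ yields a Gaussian limit with covariance $\bar I=\eta I_{\text{obs}}+(1-\eta)I_{\text{int}}$. Under $S^2$, $w$ enters only the observational part, so $\bar I$ is still nonsingular because $\eta>0$. Since the paper's $y$ is fixed and possibly zero, non-degeneracy of $\bar I$ therefore always comes from the observational block, which is the point to check carefully.
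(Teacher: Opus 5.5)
Your proposal is correct and follows the same route as the paper. Both arguments use Laplace expansions of the three mixed-data marginal likelihoods, the $\tfrac12\log N$ dimension penalty of $S^1,S^2$ relative to $S^{3*}$, and the fact that the maximized log-likelihood ratio against $S^{3*}$ is $O_p(1)$, so that each Bayes factor is $N^{-1/2}e^{O_p(1)}$. The only difference is that the paper obtains that $O_p(1)$ bound from closed-form sample-correlation expressions for the likelihood ratios and the CLT, exactly as in the proof of Theorem~\ref{thm:non_ident_S3}, whereas you invoke a two-sample Wilks/score-CLT argument, which gives the same conclusion here.
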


\section{Interpretation and Numerical Study}
\label{sec:interpretation}
In this section, we investigate the behavior of the asymptotic convergence rates as a function of the mixing parameter $\eta$. Furthermore, we illustrate the theoretical findings through an empirical study on a 2-node causal model with a prior structure closely related to the one used for the Bayesian Gaussian equivalent (BGe) models.

\subsection{Analysis of Convergence Rates}
\label{subsec:numerical_analysis_conv_rates}
Theorem \ref{thm:ident_S1} established that if the true generating mechanism is $S^1$, the posterior probability of the true model concentrates to 1 at an exponential rate determined by $D_{12}$. Specifically, the error term behaves as $\pi(S^{1*} \mid \mixdata) \approx 1 - e^{-N D_{12}}$, where $N = n_N + m_N$ is the total sample size.

We first examine the rate $D_{12}(\eta)$ as a function of the sample size mixing parameter $\eta$. Lemma \ref{lemma:concavity_D_12} shows that the exponent is strictly concave and the condition \eqref{eq:cond_D_12} reveals an interesting phenomenon. If $y^2 > \tstwo$, the inequality does not hold (the LHS becomes negative) and the exponent is strictly decreasing, suggesting that asymptotically, observational data does not contribute to the optimal rate.

In the case where $y^2 < \tstwo$, the behavior depends on the Signal-to-Noise Ratio (SNR). For low SNR (small $|w^*|$), the linear and logarithmic terms are approximately equal; and hence $D_{12}$ remains strictly decreasing. Similarly, for large SNR (large $|w^*|$), the inequality is violated because the logarithmic term (RHS) grows unbounded while the linear term (LHS) saturates. Therefore, optimal mixing occurs only for intermediate SNR with $y^2 < \tstwo$. In this case, the posterior uses the observational data to better \textit{learn} $w$ and hence a combination of interventional and observational data yields a faster concentration rate.

Figure \ref{fig:comparison_rates} (left) illustrates the behavior of $D_{12}$.  This confirms the insight that in the limit $\eta \to 1$ (purely observational data), the problem becomes non-identifiable, and the exponential driving force disappears. Moreover, this figure shows that two choices of the hyper-parameters which give different behaviors of the rate speed as a function of $\eta$; for the blue line, mixing can improve the speed, while for the red dotted line, the observational samples affect the concentration speed.

Next, we analyze the rate $D_{21}(\eta)$ which governs convergence when the true model is the reverse causal structure $S^2$. In Lemma \ref{lemma:D_21_concavity}, we show that the exponent is strictly concave in $\eta$ and always achieves a maximum inside $(0,1)$. Unlike $D_{12}(\eta)$, Figure \ref{fig:comparison_rates}(right) demonstrates that $D_{21}(\eta)$ vanishes in both extremes: as $\eta \to 1$ and as $\eta \to 0$. 
\begin{itemize}
    \item As $\eta \to 1$ (observational limit), $S^1$ and $S^2$ become observationally equivalent, rendering $D_{21}(\eta) \to 0$.

    \begin{figure*}[!t] 
    \centering
    \includegraphics[width=0.95\textwidth]{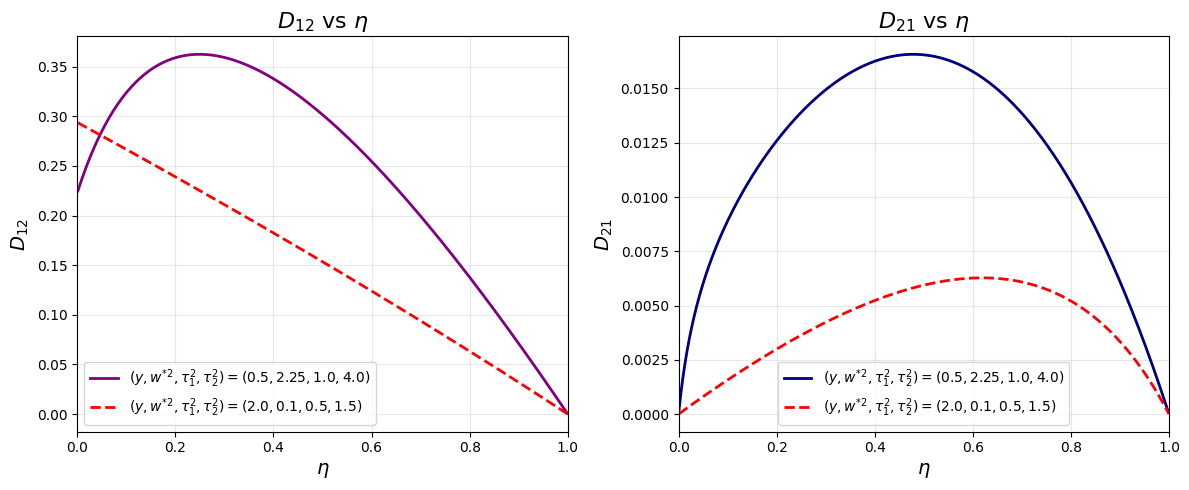}
    \caption{Comparison of the rates $D_{12}$ (left) and $D_{21}$ (right) as a function of $\eta$ for different choices of $[w^*, \tau_1^{*2}, \tau_2^{*2}]^\transpose$.}
    \label{fig:comparison_rates}
\end{figure*}
    \item As $\eta \to 0$ (interventional limit), the intervention $do(Y(2)=y)$ renders $Y(1)$ independent of $y$ in both $S^2$ and $S^3$. Since both models predict $Y(1) \sim N(0, \tsone)$, they become indistinguishable, and the rate again drops to zero.
\end{itemize}

This highlights an important trade-off: observational data is required to distinguish $S^2$ from the independence model $S^3$, while interventional data is required to distinguish $S^2$ from $S^1$. 
\begin{remark}
    Alternatively, parameterizing the posterior error by a fixed interventional budget $m$ yields the approximation $1 - \pi(S^* \mid \mixdata) \approx e^{-m \frac{D(\eta)}{1-\eta}}$. The term $\frac{D(\eta)}{1-\eta}$ characterizes the marginal gain provided by observational data relative to the interventional baseline. As shown in Figure~\ref{fig:new_exponents}, increasing $\eta$ consistently improves the concentration speed. Notably, this improvement varies from incremental gains (e.g., the red dotted line) to significant accelerations, highlighting the important role of observational samples.
\end{remark}

\subsection{Application to BGe - type  Priors}
\label{subsec:bge_models}
We consider the following hierarchical model
\begin{equation}
\label{eq:BGe_priors_compact}
\begin{split}
    S \sim \text{Unif}\{S^1, S^2, S^3\}; \quad \tau_j^2 \mid S^i \sim \text{IG}(\alpha_{i,j}, \beta), \\
    w \mid (S^i, \tau^2) \sim N(0, \lambda\tau_i^2)^{\mathbb{I}(i \neq 3)}
\end{split}
\end{equation}
where $\mathbb{I}$ denotes the indicator function, $\beta, \lambda, \alpha_{i,j} \in (0, \infty)$ for all $i \in \{1, 2, 3\}$, $j \in \{1,2\}$ and $\text{IG}(\alpha, \beta)$ is the Inverse-Gamma distribution with parameters $\alpha$ and $\beta$. We first investigate the case where only observational data is available.

\subsubsection{Observational Data}
For this choice of priors, the posterior distribution over the model structures can be calculated in closed form. The exact formula is given in Appendix \ref{thm:BGe_nonindent}.

\begin{remark}
    Under the specific hyper-parameter choice $\alpha_1 = \alpha_4 = \alpha$, $\alpha_2 = \alpha_3 = \alpha - \frac{1}{2}$, and $2\beta = \lambda$, we have $S_1^{X\lambda} = S_1^{X\beta}$ and $S_2^{X\lambda} = S_2^{X\beta}$, where these quantities are defined in \eqref{eq:def_S_beta_eta}. Consequently, the posterior distribution, $\left[\pi(S^1\mid\obsdata), \pi(S^2\mid\obsdata), \pi(S^3\mid\obsdata)\right]^\transpose$ simplifies to
\begin{equation}
    \frac{1}{c} \left[ \Delta^{-\nu/2}, \Delta^{-\nu/2}, (S_1^{X\beta})^{\frac{1-\nu}{2}}(S_2^{X\beta})^{-\frac{\nu}{2}} \right]^\transpose,
\end{equation}
where $\Delta = S_1^{X\beta} S_2^{X\beta} - (S_{12}^{X})^2$.
where $c$ is a normalizing constant and $\nu = 2\alpha + n$. This choice of priors places equal mass on models within the same Markov equivalence class. \cite{geiger1998characterization} demonstrates that these are the unique priors satisfying this property.
\end{remark}

Empirically, Figure \ref{fig:comparison_post_odds} in the Appendix demonstrates that when $S^1$ is the true mechanism, the posterior odds between the non-identifiable models $S^1$ and $S^2$ do not concentrate at 1, but rather at a constant determined by the priors. Furthermore, we note that with this choice of hyper-parameters the Bayes factor concentrates at a value less than 1, suggesting that the Bayesian approach favors the incorrect model $S^2$ in this scenario. 

Next, to empirically verify the convergence behavior of the independence model $S^3$, we plot the posterior probability of the true structure in Figure~\ref{fig:m3_nonident_conv_speed}. We observe that the posterior mass concentrates at $1$ at a rate of $O_p(n^{-1/2})$, as established in Theorem~\ref{thm:non_ident_S3}. The minor fluctuations observed at larger $n$ are consistent with this stochastic bound; unlike the connected cases which exhibit almost sure  concentration, the independence case is characterized by this slower, probabilistic rate. 

\begin{figure}[h!]
    \centering
    \includegraphics[width=\linewidth]{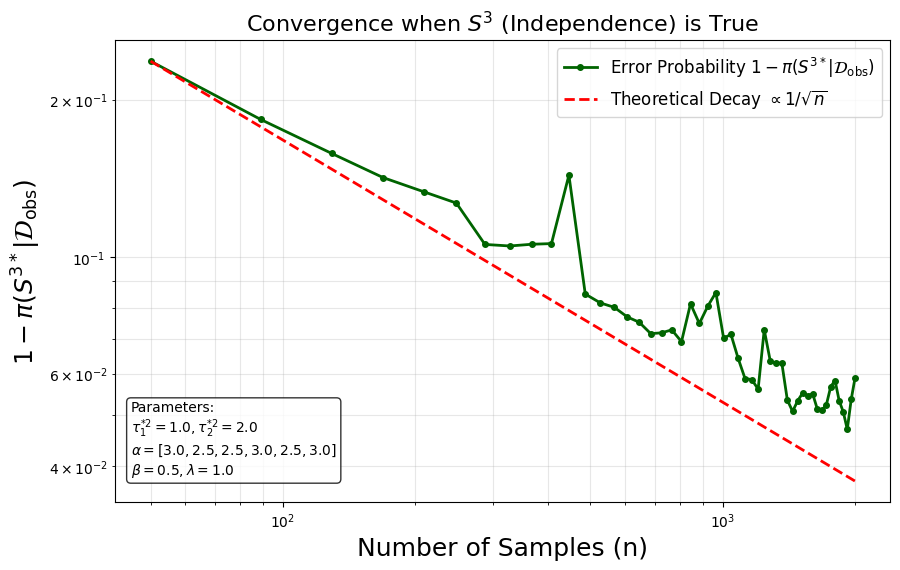}
    \caption{Concentration speed (in log scale) of the posterior $\pi(S^{3*} \mid \obsdata)$ to 1 when the true generating model is $S^3$.}
    \label{fig:m3_nonident_conv_speed}
\end{figure}

To verify the distributional convergence of the log-posterior ratio to a $\chi_1^2$ random variable (see Remark~\ref{remark:log_posterior_chi}), we evaluate the \textit{augmented posterior odds} defined in \eqref{eq:conv_to_chi_sq}. The experimental results presented in Figure~\ref{fig:Bayes_factors_nonident} empirically confirm that these statistics converge in distribution to $\chi_1^2$, aligning with our theoretical predictions.

\begin{figure}[h!]
    \centering
    \includegraphics[width=\linewidth]{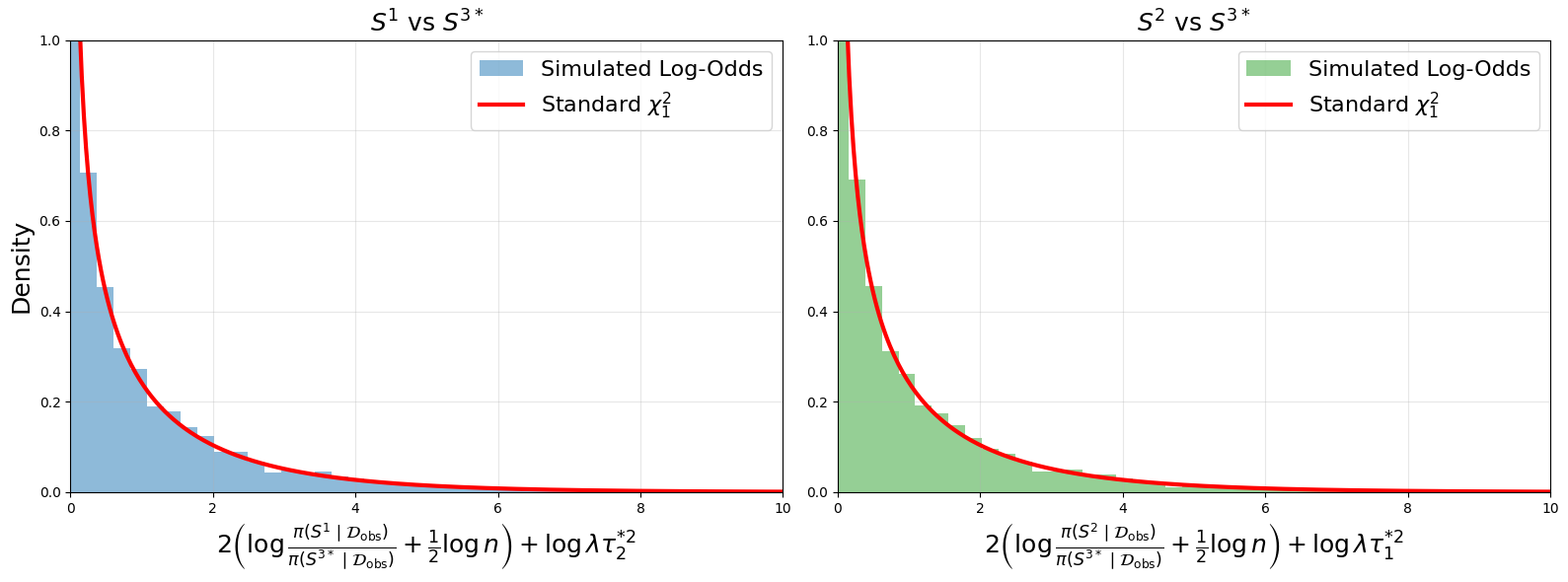}
    \caption{Augmented posterior odds showing convergence to a $\chi_1^2$ distribution when the true generating model is $S^{3*}$.}
    \label{fig:Bayes_factors_nonident}
\end{figure}

\subsubsection{Observational and Interventional Data}
We now add $m$ interventional datapoints which come from the distribution of $\cdot\mid \{\text{do}(Y(2) = y), S\}$. Again, the posterior over the structures has a closed form solution and the exact calculations can be found in Theorem \ref{thm:BGe_interv}

Figure \ref{fig:different_etas_D12_D21} illustrates the posterior's exponential concentration to 1 when the data is generated by either model $S^1$ (left) or $S^2$ (right). For all experiments we used the BGe priors, i.e. $\alpha_1 = \alpha_4 = 3.0$ and $\alpha_2 = \alpha_3 =2.5$ with $2\beta = \lambda = 1.0$, which places equal mass on models $S^1$ and $S^2$ when only observational samples are available. As expected, the closeness to the theoretical prediction varies with the mixing parameter $\eta$. For structure $S^1$, this correspondence degrades as $\eta$ increases from $0.1$ to $0.9$. This behavior is strongly connected to the results in Section \ref{subsec:numerical_analysis_conv_rates}, where the exponent $D_{12}$ approaches zero as the proportion of observational samples increases, effectively nullifying the exponential decrease. One can see that for the hyperparameters used, the theoretical exponent decreases from $0.53$ for $\eta = 0.1$ to $0.30$ for $\eta = 0.5$ and finally to $0.07$ for $\eta = 0.9$. In contrast, for structure $S^2$, the empirical results track the predicted exponential decay most accurately at intermediate values of $\eta$, with performance degrading at the extremes, again in line with  the results of Section \ref{subsec:numerical_analysis_conv_rates}.

Finally, we show the posterior concentration speed to $1$ for $S^3$ in Figure \ref{fig:m3_ident_conv_speed}. As previously observed, the concentration speed follows the $\sqrt{N}$ line, with the fluctuations around given by the stochastic bound.

\begin{figure}[h!]
    \centering
    \includegraphics[width=\linewidth]{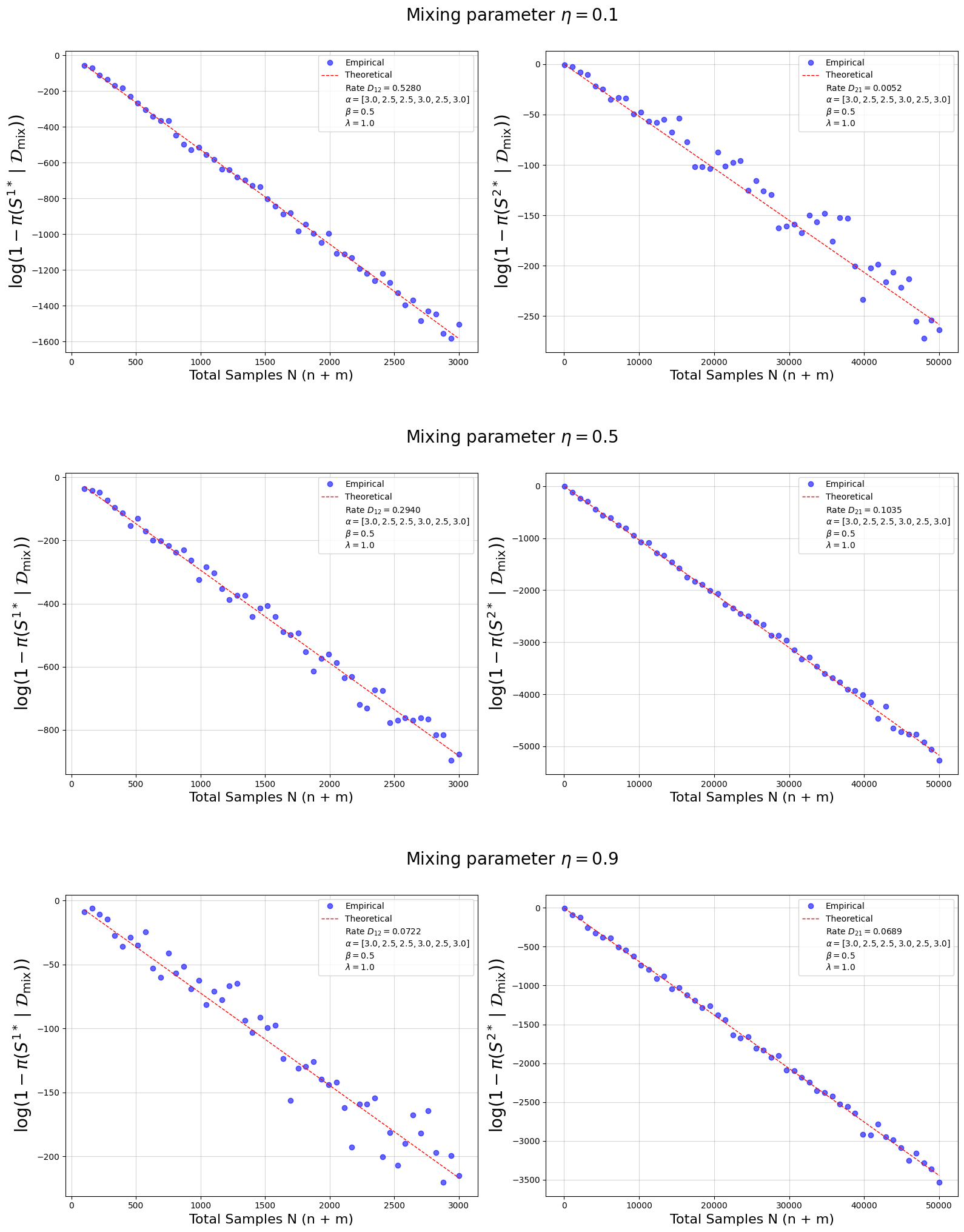}
    \caption{Concentration speed of the posterior to 1 when the true model is $S^1$ (left) or $S^2$ (right).}
    \label{fig:different_etas_D12_D21}
\end{figure}

\begin{figure}[h!]
    \centering
    \includegraphics[width=\linewidth]{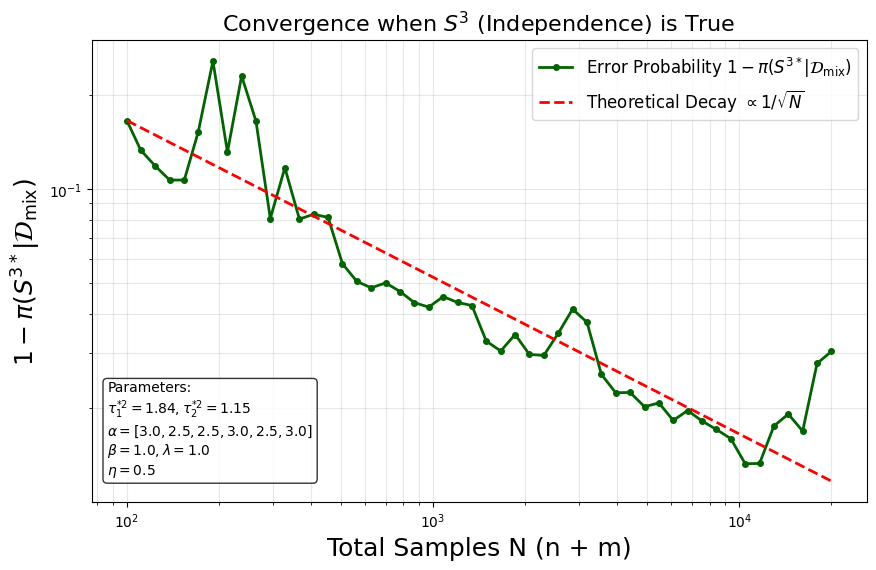}
    \caption{Concentration speed (in log scale) of the posterior $\pi(S^{3*} \mid \mixdata)$ to 1 when the true generating model is $S^3$.}
    \label{fig:m3_ident_conv_speed}
\end{figure}

\section{Conclusions}
We presented the first formal characterization of Bayesian posterior concentration rates for bivariate Gaussian Linear SEMs. With purely observational data, the posterior probability of a connected true structure fails to concentrate to $1$, instead converging to a fixed constant determined by the prior specification. For the independence model, the posterior identifies the structure at a stochastic rate of $O_p(n^{-1/2})$.

Our analysis of the mixed-data regime revealed a sharp concentration dichotomy: the introduction of interventional samples triggers a phase transition from polynomial to exponentially fast convergence for connected graphs. We derived explicit formulae for these concentration exponents and identified  conditions on the observational-to-total sample ratio, $\eta$, where a blend of data sources gives an increase in the  concentration speed. We validated our theoretical results in BGe-type priors. 

While we focused on the linear case, the identified concentration dichotomy suggests that the interplay between observational calibration and interventional signal is a fundamental feature of causal discovery. Extending these asymptotic bounds to non-linear mechanisms and high-dimensional settings remains a promising avenue for future research.


\bibliography{uai2026-template}

\newpage

\onecolumn

\title{Bayesian Causal Discovery: Posterior Concentration Rates with Observational and Interventional Data\\(Supplementary Material)}
\maketitle
\appendix
\section{Some Lemmata}
\subsection{Proof of Lemma \ref{lemma:non_ident}}
\label{appendix:lemma_non_ident}
Rearraging \eqref{eq:SCM_vector}, we obtain $X = (I-A^\transpose)^{-1}\epsilon$, and we note that the inverse of $(I-A^\transpose)$ always exists. Then, under structure $S^1$, we have

    $$\{X\mid \theta^1, S^1\}  \sim N_2\left( \begin{bmatrix}
        0\\ 0
    \end{bmatrix}, \begin{bmatrix}
        w^2\tau_2^2+\tau_1^2 &w\tau_2^2\\
        w\tau_2^2 & \tau_2^2
    \end{bmatrix} \right), \text{ for any } \theta^1 := [w, \tau_1^2, \tau_2^2]^\transpose \in \Theta.$$

    It is easy to check that if the parameters in the second structure, $S^2$, are chosen as follows $\theta^2 = \left[\frac{w\tau_2^2}{w^2\tau_2^2+\tau_1^2},w^2\tau_2^2+\tau_1^2, \frac{\tau_1^2\tau_2^2}{w^2\tau_2^2+\tau_1^2}\right]^\transpose \in \Theta$, then the distribution of $X$ is 

    $$\{X\mid S^2, \theta_2\} \sim N_2\left( \begin{bmatrix}
        0\\ 0
    \end{bmatrix}, \begin{bmatrix}
        w^2\tau_2^2+\tau_1^2 &w\tau_2^2\\
        w\tau_2^2 & \tau_2^2
    \end{bmatrix} \right),$$
    and therefore, the two structures are non-identifiable. 

\begin{lemma}
    \label{lemma:MLE_nonident}
    If $\obsdata := (X_i)_{1\leq i \leq n}$ are $n$ i.i.d. observations generated by \eqref{eq:SCM_vector}, then the MLEs for each structure are

    \begin{align}
        &\hat{\theta}^1 := [\hat{\theta} \mid S^1] = \left[\frac{S^X_{12}}{S^X_2}, \frac{S_1^X S_2^X-(S^{X}_{12})^2}{nS^X_2}, \frac{S_2^X}{n}\right]^\transpose, \\
        &\hat{\theta}^2 := [\hat{\theta} \mid S^2] = \left[\frac{S_{12}^X}{S^X_1}, \frac{S^X_1}{n}, \frac{S_1^XS^X_2-(S_{12}^X)^2}{nS_1^X}\right]^\transpose, \\
        &\hat{\theta}^3 := [\hat{\theta} \mid S^3] = \left[0, \frac{S^X_1}{n}, \frac{S^X_2}{n}\right]^\transpose,
    \end{align}
    where
    \begin{equation}
        S^X_{12} := \sum_{i=1}^n X_i(1)X_i(2), \quad S_1^X:= \sum_{i=1}^n X_i(1)^2, \quad S^X_2:= \sum_{i=1}^n X_i(2)^2.
        \label{eq:def_S12_S1_S2}
    \end{equation}
    \label{lemma:log_lik_simple}
\end{lemma}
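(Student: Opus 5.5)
Since the $n$ observations are i.i.d.\ Gaussian with mean zero, the log-likelihood of the whole sample under each structure depends on the data only through the sufficient statistics $S_1^X, S_2^X, S_{12}^X$ of \eqref{eq:def_S12_S1_S2}. I will therefore write each log-likelihood in factorized form and maximize it coordinatewise.

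Under $S^1$ the joint density factorizes as $f(x(2)\mid\tau_2^2)\,f(x(1)\mid x(2); w,\tau_1^2)$, with $X(2)\sim N(0,\tau_2^2)$ and $X(1)\mid X(2)\sim N(wX(2),\tau_1^2)$. This follows directly from $X=(I-A^\transpose)^{-1}\epsilon$, or equivalently from the covariance computed in the proof of Lemma~\ref{lemma:non_ident}. Hence
\begin{equation*}
\ell_n(\theta\mid S^1) = -n\log(2\pi) - \tfrac{n}{2}\log\tau_2^2 - \tfrac{S_2^X}{2\tau_2^2} - \tfrac{n}{2}\log\tau_1^2 - \tfrac{1}{2\tau_1^2}\sum_{i=1}^n \bigl(X_i(1)-wX_i(2)\bigr)^2 .
\end{equation*}
The maximization then proceeds in three steps.
\begin{itemize}
\item The $\tau_2^2$ part is separate from the rest and is maximized at $S_2^X/n$.
\item For any fixed $\tau_1^2$, the $w$ part is an ordinary least-squares problem. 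Its minimizer is $\hat w = S_{12}^X/S_2^X$, and substituting it gives the residual sum of squares $S_1^X - (S_{12}^X)^2/S_2^X$.
\item Maximizing in $\tau_1^2$ then gives $\hat\tau_1^2 = \bigl(S_1^XS_2^X-(S_{12}^X)^2\bigr)/(nS_2^X)$.
\end{itemize}
These are exactly the three components of $\hat\theta^1$.

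For $S^2$ I will repeat the argument with the roles of the coordinates swapped, taking $X(1)\sim N(0,\tau_1^2)$ and $X(2)\mid X(1)\sim N(wX(1),\tau_2^2)$. This gives $\hat\theta^2$. For $S^3$ the log-likelihood is the sum of two independent centred Gaussian terms, which gives $\hat\theta^3$.

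To confirm that these stationary points are global maxima, note that each one-dimensional variance problem $-\frac n2\log v - \frac{c}{2v}$ with $c>0$ has a unique maximizer $v=c/n$. It tends to $-\infty$ both as $v\to 0$ and as $v\to\infty$. The least-squares step is convex in $w$.

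The only step needing care is positivity. We need $S_1^X,S_2^X>0$ and $S_1^XS_2^X-(S_{12}^X)^2>0$ so that the estimates lie in $\Theta$. By Cauchy--Schwarz the last quantity is nonnegative, and it is zero only when the vectors $(X_i(1))_i$ and $(X_i(2))_i$ are collinear. Because the true law is a nondegenerate bivariate Gaussian, this has probability zero once $n\ge 2$, so the formulas hold almost surely.

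As a consistency check, one can verify directly that $\hat\theta^2=\gamma(\hat\theta^1)$, as claimed in the text.
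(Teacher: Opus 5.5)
Your proof is correct and follows essentially the paper's route: factorize the density under each structure, get $\hat w$ from the least-squares first-order condition, then read off the variance estimates. Your profile/separability argument and the almost-sure positivity of $S_1^XS_2^X-(S_{12}^X)^2$ supply the global-optimality and well-definedness details that the paper only asserts. One small caveat: the factorization $X(2)\sim N(0,\tau_2^2)$, $X(1)\mid X(2)\sim N(wX(2),\tau_1^2)$ under $S^1$ is justified by the covariance in the proof of Lemma~\ref{lemma:non_ident}, and it is forced by the stated formula for $\hat\theta^1$. It does not follow from a literal reading of $X=A^\transpose X+\epsilon$ with $S^1_{12}=1$, which would give $X(2)=wX(1)+\epsilon(2)$, so cite the covariance rather than $X=(I-A^\transpose)^{-1}\epsilon$.
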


\begin{proof}
Let $\ell_n(\theta\mid S^1) := \sum_{i=1}^n\log f(X_i \mid S^1, \theta) $ denote the log-likelihood function for structure $S^1$. By factorizing the joint density as $f(x\mid S^1, \theta) = f(x(1), x(2)\mid S^1, \theta) = f(x(1)\mid x(2), w, \tau_1^2)f(x(2)\mid \tau_2^2)$, and letting $\phi_v(\cdot)$ denote the density of a centered Gaussian with variance $v$, we obtain
\begin{align*}
    \ell_n(\theta\mid S^1) &= \sum_{i=1}^n \left[ \log \phi_{\tau_1^2}(X_i(1)-wX_i(2)) + \log \phi_{\tau_2^2}(X_i(2)) \right] \\
    &= -\frac{n}{2}\log(2\pi \tau_1^2) - \sum_{i=1}^n \frac{(X_i(1)-wX_i(2))^2}{2\tau_1^2} -\frac{n}{2}\log(2\pi \tau_2^2) - \sum_{i=1}^n \frac{X_i(2)^2}{2\tau_2^2} \\
    &= -n\log(2\pi) - \frac{n}{2}\log(\tau_1^2\tau_2^2) - \frac{S_1^X-2wS^X_{12}+w^2S^X_2}{2\tau_1^2} - \frac{S^X_2}{2\tau_2^2}.
\end{align*}
The maximum likelihood estimates are obtained by solving the first-order conditions $\nabla_{\theta} \ell_n(\theta\mid S^1) = 0$. Differentiating with respect to $w$, we find:

\[
\frac{\partial \ell_n(\theta \mid S^1)}{\partial w} = -\frac{1}{2\tau_1^2}(-2S^X_{12} + 2wS^X_2) = 0 \implies \hat{w} = \frac{S^X_{12}}{S^X_2}.
\]

Substituting $\hat{w}$ and differentiating with respect to the variance parameters yields the standard variance estimators. It is easy to compute the second derivatives and show that the optimum is in fact unique. The MLEs for models $S^2$ and $S^3$ follow analogously.
\end{proof}

\begin{lemma}
    \label{lemma:conv_nonident}
    Let $\obsdata := (X_i)_{1\leq i\leq n}$ be $n$ i.i.d. observations generated by \eqref{eq:SCM_vector}. Then, as $n\to \infty$
    \begin{align}
        &\frac{S_1^X}{n} \xrightarrow{a.s.} \mathbb{E}[X(1)^2], \quad \frac{S_2^X}{n} \xrightarrow{a.s.} \mathbb{E}[X(2)^2], \\
        &\frac{S_1^X S_2^X - (S_{12}^X)^2}{n S_2} \xrightarrow{a.s.} \frac{\mathbb{E}[X(1)^2]\mathbb{E}[X(2)^2] - (\mathbb{E}[X(1)X(2)])^2}{\mathbb{E}[X(2)^2]}, \\
        &\frac{S_1^X S_2^X - (S_{12}^X)^2}{n S_1} \xrightarrow{a.s.} \frac{\mathbb{E}[X(1)^2]\mathbb{E}[X(2)^2] - (\mathbb{E}[X(1)X(2)])^2}{\mathbb{E}[X(1)^2]},
    \end{align}
    with $S_1^X$, $S_2^X$ and $S_{12}^X$ defined in \eqref{eq:def_S12_S1_S2}.
\end{lemma}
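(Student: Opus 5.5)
The plan is to apply the strong law of large numbers coordinatewise and then invoke the continuous mapping theorem. Since $X_1,\dots,X_n$ are i.i.d. copies of the Gaussian vector $X = (I-A^\transpose)^{-1}\epsilon$, the scalar sequences $X_i(1)^2$, $X_i(2)^2$ and $X_i(1)X_i(2)$ are each i.i.d. For integrability, Gaussian vectors have finite moments of all orders, and Cauchy--Schwarz gives $\mathbb{E}|X(1)X(2)| \le (\mathbb{E}[X(1)^2]\mathbb{E}[X(2)^2])^{1/2} < \infty$. Kolmogorov's strong law then yields
\[
\frac{S_1^X}{n} \xrightarrow{a.s.} \mathbb{E}[X(1)^2], \quad \frac{S_2^X}{n} \xrightarrow{a.s.} \mathbb{E}[X(2)^2], \quad \frac{S_{12}^X}{n} \xrightarrow{a.s.} \mathbb{E}[X(1)X(2)].
\]
This already proves the first line of the lemma. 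Each of these three events has probability one, so their intersection does too, and I will work on that intersection from here on.

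For the remaining two limits, write
\[
\frac{S_1^X S_2^X - (S_{12}^X)^2}{n S_2^X} = \frac{(S_1^X/n)(S_2^X/n) - (S_{12}^X/n)^2}{S_2^X/n}.
\]
This is $g(S_1^X/n, S_2^X/n, S_{12}^X/n)$ for the rational map $g(a,b,c) = (ab-c^2)/b$, which is continuous on $\{b>0\}$. The limit point satisfies $b = \mathbb{E}[X(2)^2] > 0$: under every structure in $\mathcal{S}$, $\mathbb{E}[X(2)^2]$ is either $\tau_2^2$ or $w^2\tau_1^2+\tau_2^2$, and both are strictly positive because $\tau_1^2,\tau_2^2>0$. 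Continuity of $g$ at the limit point then gives almost sure convergence to the stated expression. The same argument with $g(a,b,c)=(ab-c^2)/a$ and $\mathbb{E}[X(1)^2]>0$ handles the last line.

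The only point that needs care is that the denominators are nonzero, both in the limit and for finite $n$. In the limit this is the positivity of the variances checked above. For finite $n$, $S_2^X>0$ almost surely because $X(2)$ has a continuous distribution, and in any case the ratio is eventually well defined on the probability-one event where $S_2^X/n$ converges to a positive limit.
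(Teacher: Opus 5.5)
Your proposal is correct and follows essentially the same route as the paper: the strong law of large numbers on $S_1^X/n$, $S_2^X/n$, $S_{12}^X/n$ (with Cauchy--Schwarz for integrability of the cross term), then continuity of $g(a,b,c)=(ab-c^2)/b$ at a limit point where the variance is positive. Your explicit check of $\mathbb{E}[X(2)^2]>0$ under each structure and your handling of the finite-$n$ denominators are minor refinements of that same argument.
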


\begin{proof}
    Rearranging \eqref{eq:SCM_vector}, we have $X = (I - A^\transpose)^{-1}\epsilon$. Since $(I - A^\transpose)$ is invertible, the covariance matrix of $X$ is given by $\text{Var}(X) = (I - A^\transpose)^{-1}\Sigma (I - A^\transpose)^{-\transpose}$. Since the second moments are finite: $\mathbb{E}[X(1)^2] < \infty$ and $\mathbb{E}[X(2)^2] < \infty$, then, by the Cauchy-Schwarz inequality, $\mathbb{E}[|X(1)X(2)|] \leq \sqrt{\mathbb{E}[X(1)^2]\mathbb{E}[X(2)^2]} < \infty$.
    
    Applying the strong law of large numbers to $S^X_1 = \sum X_i(1)^2$, $S^X_2 = \sum X_i(2)^2$, and $S^X_{12} = \sum X_i(1)X_i(2)$, we obtain almost sure (a.s.) convergence to 
    $$ 
    \frac{S_1^X}{n} \xrightarrow{a.s.} \mathbb{E}[X(1)^2], \quad \frac{S_2^X}{n} \xrightarrow{a.s.} \mathbb{E}[X(2)^2], \quad \frac{S_{12}^X}{n} \xrightarrow{a.s.} \mathbb{E}[X(1)X(2)]. 
    $$
    We define the function $g(x, y, z) = \frac{xy - z^2}{y}$. Note that since $X$ follows a non-degenerate Gaussian distribution, $\mathbb{E}[X(1)^2] > 0$ and $\mathbb{E}[X(2)^2] > 0$. Consequently, the vector of sample moments converges almost surely to a point in the domain where $g$ is continuous. Therefore
    $$
    \frac{S_1^X S_2^X - (S_{12}^X)^2}{n S_2} = g\left(\frac{S_1^X}{n}, \frac{S_2^X}{n}, \frac{S_{12}^X}{n}\right) \xrightarrow{a.s.} \frac{\mathbb{E}[X(1)^2]\mathbb{E}[X(2)^2] - (\mathbb{E}[X(1)X(2)])^2}{\mathbb{E}[X(2)^2]}.
    $$
    The result for the second estimator follows analogously.
\end{proof}

\begin{lemma}
    \label{lemma:laplace_regular}
    Let $B_{\delta}(\theta)$ denote the open ball of radius $\delta$ centered at $\theta \in \Theta$. For any $S \in \mathcal{S}$ and $\theta \in \Theta$, let $\obsdata = (X_i)_{1\leq i \leq n}$ be $n$ i.i.d. observations generated by \eqref{eq:SCM_vector} with law $P_{m(S, \theta)}$ and density $f(x \mid S, \theta)$. The following four conditions hold
    
    \begin{enumerate}
    \item[(i)] for all $x_1^n = (x_i)_{1\leq i\leq n}$ with $x_i\in \mathbb{R}^2$  and $\theta \in \Theta$, $f(x_1^n\mid S, \theta)> 0$. Moroever, for all $x_1^n$, $\ell_n(\theta\mid S) = \sum_{i=1}^n  \log f(x_i\mid S, \theta)$ is six times continuously differentiable in $\theta$;
    \item[(ii)] for any $\theta_0 \in \Theta$ there exist $\epsilon > 0$ and $M < \infty$ such that $B_\epsilon(\theta_0) \subseteq \Theta$ and for $1 \le j_1, \dots, j_d \le m$ with $d \le 6$,
    \[
    \limsup_{n \to \infty}  \sup \{ n^{-1} | \partial_{j_1} \dots \partial_{j_d} \ell_n(\theta\mid S) | : \theta \in B_\epsilon(\theta_0) \} < M,
    \]
    with $P_{m(S,\theta_0)}$-probability one;
    \item[(iii)] for any $\theta_0 \in \Theta$ and $\epsilon > 0$,
    \[
    \limsup_{n \to \infty}  \sup \{ n^{-1} \det(\nabla^2_\theta \ell_n(\theta\mid S)) : \theta \in B_\epsilon(\theta_0) \} < 0
    \]
    with $P_{m(S, \theta_0)}$-probability one;
    \item[(iv)] for any $\theta_0 \in \Theta$ and any $\delta > 0$,
    \[
    \limsup_{n \to \infty} \sup \{ n^{-1} [\ell_n(\theta\mid S) - \ell_n(\theta_0\mid S)]: \theta \in \Theta \setminus B_\delta(\theta_0) \} < 0
    \]
    with $P_{m(S,\theta_0)}$-probability one.
\end{enumerate}
Hence, the models are Laplace regular as per \citet{Kass1990TheVO}.
\end{lemma}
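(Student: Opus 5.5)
We verify the four Laplace-regularity conditions directly, using the explicit Gaussian form of the log-likelihood in Lemma~\ref{lemma:MLE_nonident}. For each $S\in\mathcal S$, after reparametrising the structure appropriately, the log-likelihood is
\[
\ell_n(\theta\mid S) = -n\log(2\pi) - \tfrac{n}{2}\log(\tau_1^2\tau_2^2) - \frac{Q_1(w)}{2\tau_1^2} - \frac{Q_2(w)}{2\tau_2^2}.
\]
Here $Q_1,Q_2$ are quadratics in $w$ whose coefficients are the sufficient statistics $S_1^X,S_2^X,S_{12}^X$ (for $S^3$, simply set $w=0$ and drop that coordinate).

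For condition (i), positivity of $f$ is immediate from the Gaussian density. The map $\theta\mapsto \ell_n$ is a composition of polynomials in $w$, $\log$, and $1/\tau_j^2$. Each of these is $C^\infty$ on $\Theta=\mathbb R\times(0,\infty)^2$, so six-fold continuous differentiability follows.

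For condition (ii), fix $\theta_0$ and choose $\epsilon$ small enough that $B_\epsilon(\theta_0)\subseteq\Theta$ and $\tau_j^2\ge c>0$ on the ball. Every partial derivative of order $d\le6$ of $n^{-1}\ell_n$ is a finite sum of terms of the form (polynomial in $w$) $\times\,\tau_j^{-2k}\times$ (one of $1$, $S_1^X/n$, $S_2^X/n$, $S_{12}^X/n$). On the closed ball, the polynomial and inverse-power factors are uniformly bounded. By Lemma~\ref{lemma:conv_nonident} (the SLLN), the normalised statistics converge a.s. to finite limits. Hence the $\limsup$ of the supremum is a.s. finite, and we can choose $M$ accordingly.

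For condition (iii), the same argument shows that $n^{-1}\nabla^2\ell_n(\theta\mid S)$ converges a.s. uniformly on the ball to $-I(\theta;\theta_0)$, the Hessian of the expected log-likelihood. The idea is to shrink $\epsilon$, since the negative sign is only needed near $\theta_0$. At $\theta=\theta_0$, this limit is minus the Fisher information of a non-degenerate Gaussian, computed in the conditional factorisation. That information is block-diagonal, with entries $\mathbb E[X(2)^2]/\tau_1^2>0$ and $1/(2\tau_j^4)>0$, so it is positive definite. By continuity, the limit Hessian remains negative definite on a small ball. The determinant of a negative definite $3\times3$ matrix is negative, and for $S^3$ ($2\times2$) one uses $-\nabla^2$ instead; Kass et al.\ state the condition as the Hessian being negative definite. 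Uniform convergence then gives the strict $\limsup<0$.

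Condition (iv) is the main obstacle, because $\Theta$ is non-compact: $\tau_j^2\to0$, $\tau_j^2\to\infty$, and $|w|\to\infty$ all have to be controlled. Here I would profile out the variances. For fixed $w$, maximising over $\tau_j^2$ gives $\tau_j^2=Q_j(w)/n$, and hence
\[
n^{-1}\ell_n \le -\tfrac12\log\bigl(Q_1(w)Q_2(w)/n^2\bigr) + \mathrm{const}.
\]
Each quadratic satisfies $Q_1(w)/n \ge (S_1^XS_2^X-(S_{12}^X)^2)/(nS_2^X)$, which is a.s. bounded below by a positive constant by Lemma~\ref{lemma:conv_nonident}. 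Moreover, $Q_1(w)/n\to\infty$ as $|w|\to\infty$ uniformly in $n$ for large $n$. Together these confine the near-maximisers to a compact set $K$. In the other directions, the terms $-\frac n2\log\tau_j^2 - Q_j/(2\tau_j^2)$ push $n^{-1}\ell_n\to-\infty$ as $\tau_j^2\to0$ or $\tau_j^2\to\infty$, uniformly given these a.s. bounds on $Q_j/n$.

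On $K\setminus B_\delta(\theta_0)$, $n^{-1}\ell_n$ converges uniformly a.s. to $\mathbb E_{\theta_0}\ell(\theta)$. The difference $\mathbb E_{\theta_0}\ell(\theta)-\mathbb E_{\theta_0}\ell(\theta_0)=-\mathrm{KL}$ is continuous and strictly negative away from $\theta_0$. This uses identifiability of $\theta$ within a fixed structure $S$: the covariance matrix determines $(w,\tau_1^2,\tau_2^2)$ uniquely given $S$. Its supremum over the compact set is therefore negative, which yields (iv).
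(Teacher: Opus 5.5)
Your proposal is correct and follows the same architecture as the paper's proof. Conditions (i)--(ii) come from smoothness plus the SLLN, and (iii) from negative definiteness of the limiting Hessian near the truth together with continuity on a small ball. For (iv), you split $\Theta\setminus B_\delta(\theta_0)$ into a non-compact part, where $n^{-1}\ell_n$ is forced far down, and a compact part, handled by uniform a.s.\ convergence to $-\mathrm{KL}$. The only real difference is how you build $K$: you profile out the variances, whereas the paper uses the exact identity $\frac1n[\ell_n(\theta\mid S^1)-\ell_n(\hat\theta^1\mid S^1)]=-\frac12\bigl[(w-\hat w)^2\hat\tau_2^2/\tau_1^2+\phi(\hat\tau_1^2/\tau_1^2)+\phi(\hat\tau_2^2/\tau_2^2)\bigr]$ with $\phi(u)=u-1-\log u$.

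Your handling of (iii) is more careful than the paper's. As literally stated, (iii) fails for large $\epsilon$, and for the two-parameter model $S^3$ a negative definite Hessian has positive determinant. So the condition must be read in terms of $-\nabla^2\ell_n$, as you do; the paper passes over this with ``the other cases follow along the same lines.''
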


\begin{proof}
    We check the four conditions presented in \citet{Kass1990TheVO} which allow us to approximate the marginal likelihoods using Laplace's method. Without loss of generality we assume $S = S^1$. The other  remaining cases follow along the same lines. Under $S^1$, the log-likelihood $\ell_n(\theta \mid  S^1)$ as a function of the MLE $\hat{\theta}^1$ is

    \begin{align}
    \label{eq:MLE_expansion_S1}
    \ell_n(\theta \mid  S^1) &= -n \log 2\pi -\frac{n}{2}\log \tau_1^2\tau_2^2 - \frac{n}{2}\cdot\frac{(w-\hat{w})^2\hat{\tau}^2_2}{\tau_1^2} - \frac{n}{2}\cdot \frac{\hat{\tau}_1^2}{\tau_1^2}- \frac{n}{2}\cdot \frac{\hat{\tau}_2^2}{\tau_2^2}.
    \end{align}

    \begin{itemize}
        \item[(i)] We note that $\ell_n( \theta \mid S^1)$ is 6-times continuously differentiable and $f(x_1^n \mid S, \theta) >0$ for all $\obsdata$ and $\theta \in \Theta$. 

        \item[(ii)] The log-likelihood function $\ell_n(\theta\mid S_1)$ decomposes into a sum of terms that are either quadratic polynomials in the structural parameter $w$ or rational functions of the variance parameters $\tau_1^2, \tau_2^2$.

        Specifically, partial derivatives with respect to $w$ of order $d \ge 3$ vanish and hence are bounded. Partial derivatives with respect to the variances involve terms of the form $\tau^{-k}$, which are $C^\infty$ smooth and bounded on any compact domain where the variances are bounded away from zero (i.e., $\tau_j^2 \ge \delta > 0$). Consequently, all mixed partial derivatives of order $d \ge 3$, when normalized by $n^{-1}$, are uniformly bounded in a neighborhood of the true parameter, satisfying the necessary condition.

        \item[(iii)] The Hessian is 

            \begin{equation}
                H(\theta):= H(w, \tau_1^2, \tau_2^2) = 
                \begin{bmatrix} 
                -\frac{n \hat{\tau}_2^2}{\tau_1^2} & \frac{n \hat{\tau}_2^2 (w - \hat{w})}{(\tau_1^2)^2} & 0 \\
                \frac{n \hat{\tau}_2^2 (w - \hat{w})}{(\tau_1^2)^2} & \frac{n}{2(\tau_1^2)^2} - \frac{n}{(\tau_1^2)^3} \left[ \hat{\tau}_1^2 + \hat{\tau}_2^2(w-\hat{w})^2 \right] & 0 \\
                0 & 0 & \frac{n}{2(\tau_2^2)^2} - \frac{n \hat{\tau}_2^2}{(\tau_2^2)^3}
                \end{bmatrix}
            \end{equation}

            with determinant

            $$\det(H(\theta)) = -\frac{n^3\hat{\tau}_2^2}{4\tau_1^6\tau_2^4}\Big(1-\frac{2\hat{\tau}_2^2}{\tau_2^2}\Big)\Big(1-\frac{2\hat{\tau}_1^2}{\tau_1^2}\Big).$$
            
            For simplicity we define the normalized determinant function $h(\theta) = n^{-3} \det(H(\theta))$. The determinant evaluated at the MLE is $\det(H(\hat{\theta})) = -n^3 / (4 \hat{\tau}_1^6 \hat{\tau}_2^4)$, which implies that $h(\hat{\theta}) = -1/(4 \hat{\tau}_1^6 \hat{\tau}_2^4) := -C < 0$, where $C$ is a positive constant with probability 1. 
            
            The components of the Hessian matrix are linear combinations of rational functions of the variance parameters and polynomial functions of $w$. Since the variances are strictly positive in the parameter space $\Theta$, the determinant is a continuous function of $\theta$.  Moreover, $\hat{\tau}_1^2$ and $\hat{\tau}_2^2>0$ a.s. By the definition of continuity, for any $\delta > 0$, there exists an $\epsilon > 0$ such that $|h(\theta) - h(\hat{\theta})| < \delta$ for all $\theta \in B_\epsilon(\hat{\theta})$. Choosing $\delta = C/2$, we obtain $h(\theta) < -C + C/2 = -C/2 < 0$ for all $\theta$ in this neighborhood. This strictly negative upper bound ensures that the condition $\limsup_{n \to \infty} \sup_{\theta \in B_\epsilon(\hat{\theta})} \{ n^{-1} \det(H(\theta)) \} < 0$ is satisfied with probability one.

        \item[(iv)] 

Condition (iv) requires the global consistency of the MLE. For simplicity, we consider only the $S^1$ case; the other cases follow analogously. Let $\hat{\theta}^1 = [\hat{w}, \hat{\tau}_1^2, \hat{\tau}_2^2]^\top$ denote the MLE under model $S^1$. Using the expansion in \eqref{eq:MLE_expansion_S1}, the normalized difference between the log-likelihood at a parameter $\theta = [w, \tau_1^2, \tau_2^2]^\top$ and at the MLE is given by

\begin{equation*}
g_n(\theta) := \frac{1}{n}\left[\ell_n(\theta \mid S^1) - \ell_n(\hat{\theta}^1 \mid S^1) \right] = -\frac{1}{2} \left( \frac{(w-\hat{w})^2\hat{\tau}^2_2}{\tau_1^2} + \phi\left(\frac{\hat{\tau}_1^2}{\tau_1^2}\right) + \phi\left(\frac{\hat{\tau}_2^2}{\tau_2^2}\right) \right),
\end{equation*}
where we define $\phi(u) = u - 1 - \log u$ for $u > 0$. It is a standard result that $\phi(u) \geq 0$ for all $u > 0$, with equality holding if and only if $u = 1$. Furthermore, $\phi(u)$ is strictly decreasing on $(0, 1)$, strictly increasing on $(1, \infty)$, and satisfies $\lim_{u \to 0}\phi(u) = \lim_{u\to \infty} \phi(u) = \infty$. Because each term inside the bracket is non-negative, $g_n(\theta) \leq 0$ globally, with equality strictly at $\theta = \hat{\theta}^1$.

The primary technical challenge is that the parameter space $\Theta = \mathbb{R}\setminus\{0\}\times (0, \infty)^2$ is not compact. We therefore proceed by partitioning $\Theta \setminus B_\delta(\theta_0)$ into a non-compact region and a compact region.

By Lemma \ref{lemma:conv_nonident}, the MLE is consistent, i.e. $\hat{\theta}^1 \xrightarrow{a.s.} \theta_0$. Consequently, the sequence $\hat{\theta}^1$ is eventually bounded almost surely. There exist positive constants $c, C$ such that for $n$ sufficiently large, we have $\hat{\tau}_1^2, \hat{\tau}_2^2 \in [c, C]$ and $|\hat{w}| \leq C$.

Fix an arbitrary constant $M > 0$. We construct a compact set $K := [-W, W] \times [l, L] \times [l, L]$ containing $B_\delta(\theta_0)$ by choosing the positive constants $W, l, L$ such that

\begin{align}
    \label{cond1}
    L > C \quad &\text{and}\quad \phi\left(\frac{C}{L} \right) > 2M\\
    \label{cond2}
    l < c \quad &\text{and}\quad  \phi\left(\frac{c}{l} \right) > 2M\\
    \label{cond3}
    W > C &+ \sqrt{\frac{2ML}{c}}
\end{align}

Conditions \eqref{cond1} and \eqref{cond2} are trivially satisfied by taking $L$ sufficiently large and $l$ sufficiently small, due to the behavior of $\phi$ when  $u \to 0$ and $u \to \infty$. Next, we demonstrate that for any $n$ large enough and any $\theta \in \Theta \setminus K$, we have $g_n(\theta) < -M$. We evaluate the following three cases

\begin{itemize}
    \item \textbf{Case 1:} If $\tau_i^2 > L$ for at least one $i \in \{1,2\}$, then $\frac{\hat{\tau}_i^2}{\tau_i^2} < \frac{C}{L} < 1$. Because $\phi$ is strictly decreasing on $(0, 1)$, and the condition in \eqref{cond1} implies $\phi\left( \frac{\hat{\tau}_i^2}{\tau_i^2} \right) > \phi\left(\frac{C}{L}\right) > 2M$. This ensures $g_n(\theta) < -M$.
    \item \textbf{Case 2:} If $\tau_i^2 < l$ for at least one $i \in \{1,2\}$, then $\frac{\hat{\tau}_i^2}{\tau_i^2} > \frac{c}{l} > 1$. Because $\phi$ is strictly increasing on $(1, \infty)$, and the condition in \eqref{cond2} implies $\phi\left( \frac{\hat{\tau}_i^2}{\tau_i^2} \right) > \phi\left(\frac{c}{l}\right) > 2M$. This ensures $g_n(\theta) < -M$.
    \item \textbf{Case 3:} Assume $\theta$ is not caught by Cases 1 or 2, which guarantees $\tau_1^2 \leq L$. Because $\theta \notin K$, it must be that $|w| > W$. We then have $\frac{(w-\hat{w})^2\hat{\tau}_2^2}{\tau_1^2} \geq \frac{(|w|-|\hat{w}|)^2 c}{L} \geq \frac{(W-C)^2 c}{L}$. By the condition in \eqref{cond3}, this term is strictly greater than $2M$, ensuring $g_n(\theta) < -M$.
\end{itemize}

Thus, we have established  that $\sup_{\theta \in \Theta \setminus K} g_n(\theta) \leq -M$ eventually almost surely.

We now restrict our attention to the compact set, $K \setminus B_\delta(\theta_0)$, bounded away from the true parameter. Because $\hat{\theta}^1 \xrightarrow{a.s.} \theta_0$ and $g_n(\theta)$ is continuous in both $\theta$ and $\hat{\theta}^1$, the continuous mapping theorem yields pointwise convergence
\begin{equation*}
g_n(\theta) \xrightarrow{a.s.} g_{\infty}(\theta) = -\frac{1}{2} \left( \frac{(w-w_0)^2{\tau}^2_{2,0}}{\tau_{1}^2} + \phi\left(\frac{\tau_{1, 0}^2}{\tau_1^2}\right) + \phi\left(\frac{\tau_{2, 0}^2}{\tau_2^2}\right) \right).
\end{equation*}

Moreover, since $K \setminus B_{\delta}(\theta_0)$ is compact, the above convergence implies uniform converge as well, i.e.

\begin{equation*}
\sup_{\theta \in K \setminus B_\delta(\theta_0)} |g_n(\theta) - g_\infty(\theta)| \xrightarrow{a.s.} 0.
\end{equation*}

Notice that $g_\infty(\theta) \leq 0$ everywhere, with $g_\infty(\theta) = 0$ if and only if $\theta = \theta_0$. Because $\theta_0 \notin K \setminus B_\delta(\theta_0)$ and the set is compact, $g_\infty(\theta)$ must attain a strictly negative maximum on this set. Let $\sup_{\theta \in K \setminus B_\delta(\theta_0)} g_\infty(\theta) = -M'$ with $M' > 0$. Therefore

\begin{align*}
    \sup_{\theta \in K \setminus B_\delta(\theta_0)} g_n(\theta) &= \sup_{\theta \in K \setminus B_\delta(\theta_0)}  (g_n(\theta) -g_\infty(\theta)+g_\infty(\theta))\\
    &\leq \sup_{\theta \in K \setminus B_\delta(\theta_0)}  g_\infty(\theta) + |g_n(\theta)- g_\infty(\theta)|\\
    &\leq \sup_{\theta \in K \setminus B_\delta(\theta_0)}  g_\infty(\theta) + \sup_{\theta \in K \setminus B_\delta(\theta_0)}|g_n(\theta)- g_\infty(\theta)| \leq -M' + \frac{M'}{2} = -\frac{M'}{2},
\end{align*}
eventually almost surely. Combining the bounds from the two regions, we bound the supremum over the entire set $\Theta \setminus B_\delta(\theta_0)$
\begin{equation*}
\sup_{\theta \in \Theta \setminus B_\delta(\theta_0)} g_n(\theta) = \max\left\{\sup_{\theta \in K \setminus B_\delta(\theta_0)} g_n(\theta), \sup_{\theta \in \Theta \setminus K} g_n(\theta) \right\} \leq \max \left\{ -M'/2, -M \right\} < 0.
\end{equation*}

Taking the limit supremum yields
\begin{equation*}
\limsup_{n \to \infty}\sup_{\theta \in \Theta \setminus B_\delta(\theta_0)} g_n(\theta) < 0 \quad \text{a.s.}
\end{equation*}

Finally, observe that $\frac{1}{n} \left[ \ell_n(\theta\mid S^1) - \ell_n(\theta_0\mid S^1) \right] = g_n(\theta) - g_n(\theta_0)$. Because $g_n(\theta_0) \xrightarrow{a.s.} g_{\infty}(\theta_0) = 0$, the limit supremum of the difference is identical to the limit supremum of $g_n(\theta)$. Therefore
\begin{equation*}
\limsup_{n \to \infty}\sup_{\theta \in \Theta \setminus B_\delta(\theta_0)} \frac{1}{n} \left[ \ell_n(\theta\mid S^1) - \ell_n(\theta_0\mid S^1) \right] < 0,
\end{equation*}
with $P_{m(S,\theta_0)}$-probability one, completing the proof.
\end{itemize}

Since all these conditions are fulfilled, then by Theorem 7 in \citet{Kass1990TheVO} the sequence of log-likelihood functions is Laplace regular with $P_{m(S,\theta_0)}-$probability one, for any $\theta_0\in \Theta$.
\end{proof}

\begin{lemma}
    \label{lemma:MLEs_ident}
    Let $\obsdata = (X_i)_{1\leq i \leq n}$ be $n$ i.i.d. samples generated by \eqref{eq:SCM_vector} and $\intdata = (Y_i)_{1\leq i \leq m}$ be $m$ i.i.d. samples generated by the interventional model. The maximum likelihood estimators for each model are

    \begin{align*}
        \hat{\theta}^1:= [\hat{\theta} \mid S^1] &= \left[ \frac{S^X_{12}+S_{12}^Y}{S_2^X+S_2^Y},  \frac{(S_2^X+S_2^Y)(S_1^X+S_1^Y)-(S_{12}^X+S_{12}^Y)^2}{(n+m)(S_2^Y+S_2^X)}, \frac{S_2^X}{n}\right]^\transpose,\\
        \hat{\theta}^2:= [\hat{\theta} \mid S^2]  &= \left[\frac{S_{12}^X}{S_1^X}, \frac{S_{1}^X+S_{1}^Y}{n+m}, \frac{S_1^XS_2^X-S_{12}^{X2}}{nS_1^X} \right]^\transpose,\\
        \hat{\theta}^3:= [\hat{\theta} \mid S^3] &= \left[ 0, \frac{S_1^X+S_1^Y}{n+m}, \frac{S_2^X}{n}\right]^\transpose,\\
    \end{align*}
    where
    \begin{equation}
        S^X_{12} := \sum_{i=1}^n X_i(1)X_i(2), \quad S^X_1:= \sum_{i=1}^n X_i(1)^2, \quad S^X_2:= \sum_{i=1}^n X_i(2)^2,
        \label{eq:MLE_interv_X}
    \end{equation}
    \begin{equation}
        S^Y_{12} := y\sum_{i=1}^m Y_i(1), \quad S^Y_1:= \sum_{i=1}^n Y_i(1)^2, \quad S^Y_2:= my^2.
        \label{eq:MLE_interv_Y}
    \end{equation}
\end{lemma}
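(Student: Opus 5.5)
The plan is to write down the joint log-likelihood of $\mixdata=\obsdata\cup\intdata$ under each structure using the truncated factorization implied by the $do(\cdot)$ operator, and then to maximize it exactly as in Lemma~\ref{lemma:MLE_nonident}. For the observational part nothing changes: $f(x\mid S,\theta)$ factorizes along the DAG as in the proof of Lemma~\ref{lemma:MLE_nonident}. For an interventional sample $Y_j=[Y_j(1),y]^\transpose$, the factor of the intervened node $X(2)$ is replaced by a point mass at $y$. This factor does not depend on $\theta$ and can be dropped. Only the conditional density of the non-intervened node $Y_j(1)$ remains.

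The conventions are those of Section~\ref{sec:ident}, where $S^1$ means $X(2)\to X(1)$, i.e.\ $X(1)=wX(2)+\epsilon(1)$. The three log-likelihoods, up to additive constants, are then as follows.
\begin{align*}
\ell(\theta\mid S^1) &= -\tfrac{n+m}{2}\log\tau_1^2-\frac{S_1^X+S_1^Y-2w(S_{12}^X+S_{12}^Y)+w^2(S_2^X+S_2^Y)}{2\tau_1^2}-\tfrac{n}{2}\log\tau_2^2-\frac{S_2^X}{2\tau_2^2},\\
\ell(\theta\mid S^2) &= -\tfrac{n+m}{2}\log\tau_1^2-\frac{S_1^X+S_1^Y}{2\tau_1^2}-\tfrac{n}{2}\log\tau_2^2-\frac{S_2^X-2wS_{12}^X+w^2S_1^X}{2\tau_2^2},\\
\ell(\theta\mid S^3) &= -\tfrac{n+m}{2}\log\tau_1^2-\frac{S_1^X+S_1^Y}{2\tau_1^2}-\tfrac{n}{2}\log\tau_2^2-\frac{S_2^X}{2\tau_2^2}.
\end{align*}
Here we use $Y_j(1)\mid do(Y(2)=y)\sim N(wy,\tau_1^2)$ under $S^1$ and $N(0,\tau_1^2)$ under $S^2,S^3$. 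The sums $S^Y_1$ and $S^Y_{12}$ run over $j=1,\dots,m$; the index $n$ in \eqref{eq:MLE_interv_Y} should read $m$.

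Each log-likelihood separates into blocks that can be optimized independently. Under $S^1$, the pair $(w,\tau_1^2)$ is a pooled Gaussian regression of $X(1)$ on $X(2)$ over $n+m$ points, and $\tau_2^2$ is a pure observational variance. Setting $\partial_w\ell=0$ gives $\hat w=(S_{12}^X+S_{12}^Y)/(S_2^X+S_2^Y)$. Substituting this back and setting $\partial_{\tau_1^2}\ell=0$ gives the residual sum of squares divided by $n+m$, which simplifies to the stated ratio. Finally $\hat\tau_2^2=S_2^X/n$.

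Under $S^2$, $\tau_1^2$ is a pooled variance of $X(1)$ over all $n+m$ samples, while $(w,\tau_2^2)$ is the observational-only regression of $X(2)$ on $X(1)$. This yields the same formulas as in Lemma~\ref{lemma:MLE_nonident}. Under $S^3$ both variances are direct sample variances.

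To confirm these are unique global maximizers, I would profile each block. For fixed variances, $\ell$ is a strictly concave quadratic in $w$, provided the relevant denominator $S_2^X+S_2^Y$ or $S_1^X$ is positive, which holds a.s. For a sum of squares $R>0$, the profiled map $v\mapsto -\tfrac{k}{2}\log v-\tfrac{R}{2v}$ has a unique maximum at $v=R/k$; equivalently, one can use $\phi(u)=u-1-\log u\ge 0$ as in Lemma~\ref{lemma:laplace_regular}(iv). One must also check that the pooled residual sum of squares is a.s.\ positive. By Cauchy--Schwarz it is nonnegative, and it vanishes only if all points lie exactly on a line, a null event because the $X_i(1)$ have a continuous distribution.

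The main obstacle is the modeling step rather than the calculus: justifying correctly which factors the intervention removes. In particular, under $S^2$ the interventional samples must inform only $\tau_1^2$ and not $w$, since the edge $X(1)\to X(2)$ is cut. Under $S^1$ they inform both $w$ and $\tau_1^2$, and in neither model do they inform $\tau_2^2$. Once the three displayed log-likelihoods are accepted, the rest is routine.
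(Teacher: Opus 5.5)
Your proposal is correct and follows the same route as the paper. You write the mixed-data log-likelihood with the intervened node's factor dropped, so that $Y_j(1)\sim N(wy,\tau_1^2)$ under $S^1$ and $N(0,\tau_1^2)$ under $S^2,S^3$, and then solve the first-order conditions block by block.

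Your uniqueness argument is sounder than the paper's. You use strict concavity in $w$ for fixed variances, then profile $v\mapsto -\tfrac{k}{2}\log v-\tfrac{R}{2v}$. The paper instead asserts that the Hessian of $\ell_{n,m}(\theta\mid S^1)$ is negative definite globally, which fails in the variance coordinates: $\partial^2_{\tau_2^2}\ell>0$ once $\tau_2^2>2S_2^X/n$. You are also right that the sum defining $S_1^Y$ should run to $m$.
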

\begin{proof}
First, we consider the $S^1$ model. The log-likelihood for this case is

\begin{align*}
    \ell_{n,m}(\theta \mid S^1) :&= \log f(\mixdata\mid \theta, S^1) \\
    &= \sum_{i=1}^n \Big[ \log\phi_{\tau_1^2}(X_i(1)-wX_i(2))^2 + \log \phi_{\tau_2^2}(X_i(2)) \Big] + \sum_{j=1}^m \log_{\tau_1^2}(Y_i(1) - wy) \\
    & = -\Big(n+\frac{m}{2}\Big) \log 2\pi -\frac{n+m}{2} \log \tau_1^2 -\frac{n}{2}\log \tau_2^2 -\frac{(S_1^X+S_1^Y)-2w(S_{12}^X+S_{12}^Y) + w^2(S_2^Y+S_2^X)}{2\tau_1^2} - \frac{S_2^X}{2\tau_2^2}
\end{align*}

The maximum likelihood estimates are obtained by solving the first-order conditions $\nabla_{\theta} \ell_{n,m}(\theta\mid  S^1) = 0$. Differentiating with respect to $w$, we find:
\[
\frac{\partial \ell_{n,m}(\theta\mid  S^1)}{\partial w} = -\frac{1}{2\tau_1^2}\Big[-2(S^X_{12}+S_{12}^Y) + 2w(S_2^X+S_2^Y)\Big] = 0 \implies \hat{w} = \frac{S^X_{12}+S_{12}^Y}{S_2^X+S_2^Y}.
\]
Substituting $\hat{w}$ and differentiating with respect to the variance parameters yields the variance estimators. Moreover, one can easily check that $\ell_{n, m}(\theta \mid S^1)$ is strictly concave $\theta$ by showing that the Hessian is negative definite. Therefore, the maximum is unique.  The MLEs for models $S^2$ and $S^3$ follow analogously.
\end{proof}

\subsection{Proof of Lemma \ref{lemma:concavity_D_12}}

\begin{proof}
    The explicit form derived is
\begin{equation*}
    D_{12}(\eta) = \frac{1}{2}\log \left( \eta \sigma_{1,X}^2 + \bar{\eta}\sigma_{1, Y}^2 \right) - \frac{\eta}{2}\log \sigma_{1,X}^2 - \frac{\bar{\eta}}{2}\log \tsone,
\end{equation*}
where $\sigma_{1,X}^2$ and $\sigma_{1, Y}^2$ are defined in \eqref{eq:D_12_def} and $\eta \in (0,1)$. We investigate the analytic properties of this exponent by computing the first two derivatives with respect to $\eta$
\begin{align*}
    \frac{dD_{12}(\eta)}{d\eta} &= \frac{\sigma_{1,X}^2 - \sigma_{1, Y}^2}{2[\eta(\sigma_{1,X}^2-\sigma_{1,Y}^2)+\sigma_{1,Y}^2]} -  \frac{1}{2}\log\sigma_{1,X}^2 + \frac{1}{2}\log \tsone, \\
    \frac{d^2D_{12}(\eta)}{d\eta^2} &= -\frac{(\sigma_{1,X}^2-\sigma_{1,Y}^2)^2}{2(\eta\sigma_{1,X}^2+\bar{\eta}\sigma_{1,Y}^2)^2}.
\end{align*}
Observing the second derivative, we conclude that the function is strictly concave provided $\sigma_{1,X}^2 \neq \sigma_{1,Y}^2$ (or equivalently, $y^2 \neq \tstwo$). Furthermore, we note the boundary conditions: $\lim_{\eta \to 0}D_{12}(\eta) = \frac{1}{2} \log\left(1+\frac{w^{*2}y^2}{\tsone} \right) > 0$ for all $y \neq 0$, and $\lim_{\eta\to1}D_{12}(\eta) = 0$.

Consequently, if the first derivative evaluated at $\eta=0$ is non-positive, the exponent is strictly decreasing in $\eta$, implying that the maximal rate is achieved at $\eta^* = 0$. Conversely, if
\begin{equation}
    \label{eq:first_der_D12}
    \left. \frac{dD_{12}(\eta)}{d\eta}\right|_{\eta = 0} > 0 \iff \frac{w^{*2}(\tstwo-y^2)}{2(w^{*2}y^2+\tsone)} > \frac{1}{2}\log \left( 1+\frac{w^{*2}\tstwo}{\tsone} \right),
\end{equation}
then $D_{12}(\eta)$ achieves a maximum at an interior point $\eta^* \in (0,1)$. 
\end{proof}

\subsection{Proof of Lemmma \ref{lemma:D_21_concavity}}
\begin{proof}
    The rate is given by:
\begin{equation}
    D_{21}(\eta) = \frac{1}{2} \log \left( 1 - \frac{\eta^2 w^{*2}\tsone}{ \eta(w^{*2}\tsone+\tstwo) + \bar{\eta} y^2} \right) + \frac{\eta}{2} \log \left(1+\frac{w^{*2}\tsone}{\tstwo} \right).
\end{equation}
One can easily show that 
$$\frac{d^2}{d\eta^2}\left( \frac{\eta^2w^{*2}\tsone}{\eta(w^{*2}\tsone+\tstwo-y^2)+ y^2}\right) = \frac{2w^{*2}\tsone y^4}{[(w^{*2}\tsone+\tstwo-y^2)\eta+y^2]^3} > 0,$$
which indicates that the inner term in the first logarithm is strictly convex. This, combined with the fact that the second term in $D_{21}$ is linear in $\eta$ (hence concave) and the function $u \mapsto \log(1-u)$ is concave and decreasing for $u\in (0,1)$, shows that $D_{21}(\eta)$ is strictly concave. Moreover, since the function is continuous in $\eta$ and 

$$\lim_{\eta \to 0^+}D_{21}(\eta) = 0 \text{, and } \lim_{\eta \to 1^-}D_{21}(\eta) = 0,$$

then the maximum is achieved for some $\eta^* \in (0,1)$.
\end{proof}

\section{Proof of Theorem \ref{thm:nonident_mec}}
\label{appendix:proof_thm1_nonident}
\begin{proof}
    Using the definitions introduced in Lemma \ref{lemma:log_lik_simple}, the marginal likelihood of the data $\obsdata$ conditioned on model $S^1$ is given by the integral of the likelihood weighted by the prior
    \begin{equation}
        f(\obsdata\mid S^{1*}) = \int_{\Theta} f(\obsdata\mid S^{1*}, \theta)\pi(\theta|S^1) \, d\theta = \int_{\Theta} \exp\left(\ell_n( \theta\mid S^{1*})\right)\pi(\theta\mid S^{1*}) \, d\theta.
    \end{equation}
    Since the models considered satisfy the Laplace regularity conditions (Lemma \ref{lemma:laplace_regular}), the log-marginal likelihood can be asymptotically approximated as $n \to \infty$
    \begin{align*}
        \log f(\obsdata\mid S^{1*}) &= \ell_n(\hat{\theta}^1\mid S^{1*}) - \frac{3}{2}\log n + \frac{3}{2}\log(2\pi) - \frac{1}{2}\log \det(I_1(\hat{\theta}^1)) + \log \pi(\hat{\theta}^1\mid S^{1*}) + O(n^{-1}) \text{ a.s.}
    \end{align*}
    where $I_1(\theta) =  Diag\left(  \frac{\tau_2^2}{\tau_1^2}, \frac{1}{2\tau_1^4}, \frac{1}{2\tau_2^4} \right)$ is the Fisher information matrix under structure $S^1$ (parameterized by $\theta = [w, \tau_1^2, \tau_2^2]^\transpose$).
    Similarly, for the reverse causal model $S^2$ and the independence model $S^3$, we have
    \begin{align}
        \label{eq:Laplace_approx_nonident}
        \noindent \log f(\obsdata\mid S^2) &= \ell_n(\hat{\theta}^2\mid S^2) - \frac{3}{2}\log n - \frac{1}{2}\log \det(I_2(\hat{\theta}^2)) + \log \pi(\hat{\theta}^2\mid S^2) + \frac{3}{2}\log 2\pi + O(n^{-1})\text{ a.s.}, \\
        \log f(\obsdata\mid S^3) &= \ell_n(\hat{\theta}^3\mid S^3) - \frac{2}{2}\log n - \frac{1}{2}\log \det(I_3(\hat{\theta}^3)) + \log \pi(\hat{\theta}^3\mid S^3) + \log 2\pi + O(n^{-1}) \text{ a.s.}
    \end{align}
    The Fisher information matrix for $S^2$ (parameterized by the reverse weight and variances) is $Diag\left( \frac{\tau_1^2}{\tau_2^2}, \frac{1}{2\tau_2^4}, \frac{1}{2\tau_1^4} \right)$. For the independence model $S^3$, the parameter space is reduced to the two variances ($w=0$), yielding the $2 \times 2$ information matrix: $Diag \left(\frac{1}{2\tau_1^4}, \frac{1}{2\tau_2^4} \right)$.
    
    We first analyze the log-marginal likelihood ratio between the observationally equivalent models $S^1$ and $S^2$. Assuming the true generating structure is $S^{1*}$ with parameter $\theta^*$, we note that $\ell_n(\hat{\theta}^1\mid S^1) = \ell_n(\hat{\theta}^2\mid S^2)$ and the dimension penalty terms are equal. Thus, as $n \to \infty$
    \begin{equation}
        \log \frac{f(\obsdata\mid S^{1*})}{f(\obsdata\mid S^2)} = \frac{1}{2}\log \frac{\det(I_2(\hat{\theta}^2))}{\det(I_1(\hat{\theta}^1))} + \log \frac{\pi(\hat{\theta}^1\mid S^{1*})}{\pi(\hat{\theta}^2\mid S^2)} + O(n^{-1}) \text{ a.s}.
    \end{equation}
    
    By the result of Lemma \ref{lemma:conv_nonident}, $\hat{\theta}^1 \xrightarrow{a.s.} \theta^*$, $\hat{\theta}^2 \xrightarrow{a.s.} \gamma(\theta^*)$ and $\hat{\theta}^3 \xrightarrow{a.s.}   [0, w^{*2}\tstwo+\tsone, \tstwo]^\transpose=:\theta^3_\infty $. The Fisher information matrices satisfy the transformation rule $I_1(\theta) = J_\gamma(\theta)^\top I_2(\gamma(\theta)) J_\gamma(\theta)$,  with $J_\gamma(\theta)$ being the Jacobian of the transformation. This implies that $\det(I_1(\theta)) = \det(I_2(\gamma(\theta))) \cdot (\det J_\gamma(\theta))^2$ which gives
    \begin{align}
        \log \frac{f(\obsdata\mid S^{1*})}{f(\obsdata|S^2)} &\xrightarrow{a.s.} \frac{1}{2}\log \left( \frac{\det(I_2(\gamma(\theta^*)))}{\det(I_2(\gamma(\theta^*))) (\det J_\gamma(\theta^*))^2} \right) + \log \frac{\pi(\theta^*\mid S^{1*})}{\pi(\gamma(\theta^*)\mid S^2)} \\
        &= \log \left( \frac{\pi(\theta^*\mid S^{1*})}{\pi(\gamma(\theta^*)\mid S^2) |\det J_\gamma(\theta^*)|} \right) = \log \left( \frac{\pi(\theta^*\mid S^{1*})}{\pi(\gamma(\theta^*)\mid S^2)} \cdot \frac{w^{*2}\tstwo+\tsone}{\tstwo} \right) \text{ a.s.}
    \end{align}
    This is precisely the log-ratio of the prior density, $\pi(\theta^*\mid S^{1*})$ to the push-forward of the alternative prior, $\gamma^{-1}\#\pi(\theta^* \mid S^2)$. For the incorrect model $S^3$, the normalized log marginal likelihood ratio is 

    $$\frac{1}{n}\log\frac{f(\obsdata\mid S^3)}{f(\obsdata\mid S^{1*})} = \frac{1}{n}\left( \ell_n(\hat{\theta}^3\mid S^3) - \ell_n(\hat{\theta}^{1}\mid S^{1*}) \right) +O \left( \frac{\log n}{n}\right) \text{ a.s.}$$
    Since the log-likelihood functions are continuous in $\theta \in \Theta$, by the continuous mapping theorem and the strong law of large numbers, the normalized log-marginal likelihood ratio converges almost surely to the relative entropy between $P_{m(S^{1*}, \theta^*)}$ and $P_{m(S^3, \theta^3_\infty)}$
    
    \begin{equation}
        \frac{1}{n} \log \frac{f(\obsdata\mid S^3)}{f(\obsdata\mid S^{1*})} \xrightarrow{a.s.} -D(P_{m(S^{1*},\theta^*)} \| P_{m(S^3, \theta^3_\infty)}) = -\frac{1}{2}\log \left(1+\frac{w^{*2}\tstwo}{\tsone} \right) < 0,
    \end{equation}
    where $\theta^3_\infty  = [0, w^{*2}\tstwo+\tsone, \tstwo]^\transpose$ are the limiting MLE parameters under the "wrong" structure $S^3$. Therefore, by the continuous mapping theorem, 
    $$\frac{f(\obsdata\mid S^3)}{f(\obsdata\mid S^{1*})} \xrightarrow{a.s.} 0.$$
    Finally, by the application of Slutsky's lemma and the continuous mapping theorem the posterior probability of $S^1$ converges to
    \begin{align*}
        \pi(S^{1*}\mid \obsdata) &= \frac{1}{1 + \frac{f(\obsdata\mid S^2)}{f(\obsdata\mid S^{1*})} + \frac{f(\obsdata\mid S^3)}{f(\obsdata\mid S^{1*})}} \xrightarrow{a.s.} \frac{1}{1 + \frac{\gamma^{-1} \# \pi(\theta^*\mid S^2)}{\pi(\theta^*\mid S^{1*})}}.
    \end{align*}
\end{proof}
\section{Proof of Theorem \ref{thm:non_ident_S3}}
\label{appendix:proof_thm2_nonindent}
\begin{proof}
    In the case of $\mathcal{M}^3 = \{\theta^*, S^{3*}\}$ with $\theta^* := [0, \tsone, \tstwo]^\transpose$ being the "true" generating model, the corresponding MLEs converge in probability as follows

    $$\hat{\theta}^1, \hat{\theta}^2, \hat{\theta}^3  \xrightarrow{a.s.}[0, \tsone, \tstwo]^\transpose,$$
    because $\mathbb{E}[X(1)X(2)] = \mathbb{E}[X(1)]\mathbb{E}[X(2)] = 0$, $\mathbb{E}[X(1)^2] = \tsone$ and $\mathbb{E}[X(2)^2] = \tstwo$.
    Using the Laplace approximation (Lemma \ref{lemma:laplace_regular}), we expand the log-marginal likelihoods. Note that model $S^3$ has dimension $d_3=2$, while $S^1$ and $S^2$ have dimension $d_1=d_2=3$. The Bayes factor between the complex models ($S^1, S^2$) and the true simple model ($S^3$) is determined by the difference in dimensions (Occam's razor penalty):

    \begin{equation}
        \log \frac{f(\obsdata\mid S^1)}{f(\obsdata\mid S^{3*})} = \ell_n(\hat{\theta}^1\mid  S^1) - \ell_n(\hat{\theta}^3\mid S^3) - \frac{3-2}{2}\log n + O(1) \text{ a.s}.
    \end{equation}

    Exponentiating this, we analyze the behavior of the likelihood ratio scaled by $\sqrt{n}$
    \begin{align*}
        \sqrt{n} \frac{f(\obsdata\mid S^1)}{f(\obsdata\mid S^{3*})} &= \exp\left(\ell_n(\hat{\theta}^1\mid S^1) - \ell_n(\hat{\theta}^3\mid S^3) \right) \cdot n^{1/2} \cdot n^{-1/2} \cdot e^{O_p(1)} = \lambda_1,    
    \end{align*}
    with $\lambda_1 = O_p(1)$ as we show next. Using the definitions introduced in Lemma \ref{lemma:log_lik_simple}, twice the log-likelihood difference is 

    $$2(\ell_n(\hat{\theta}^1\mid S^1) - \ell_n(\hat{\theta}^3\mid S^{3*})) =\frac{2n}{2}\log\Big(1-\frac{(S_{12}^{X})^2}{S_1^XS_2^X}\Big) = n\log(1-r_n^2).$$
    
    where for simplicity we denote $r_n = \frac{S_{12}^X}{\sqrt{S_1^XS_2^X}}$. By the strong law of large numbers, we have
    \begin{equation}
    \label{eq:derivation_chi1}
            \frac{1}{n}S_1^X \xrightarrow{a.s.}\tsone, \quad \frac{1}{n}S_2^X \xrightarrow{a.s.}\tstwo
    \end{equation}

    and by the central limit theorem 
    \begin{equation}
    \label{eq:derivation_chi2}
        \frac{1}{\sqrt{n}}S_{12}^X \xrightarrow{d}N(0, \tsone\tstwo).
    \end{equation}

    Therefore, using the continuous mapping theorem and then Slutsky's lemma, we obtain that $\sqrt{n}r_n \xrightarrow{p} N(0,1)$. To find the distribution of the statistics $n \log(1-r_n^2)$ we perform a Taylor expansion of the function $f(t) = \log(1-t)$ around $t=0$, substituting $t = r_n^2$
    
    $$n\log(1-r_n^2) = -nr_n^2-\frac{n(r_n^2)^2}{2} -O(nr_n^6) .$$
    We established previously that $n r_n^2 \xrightarrow{d} \chi^2_1$. Therefore, $-n r_n^2 \xrightarrow{d} -\chi^2_1$. We can rewrite $\frac{n r_n^4}{2}$ as $\frac{1}{2n} (n r_n^2)^2$. Since $n r_n^2$ converges to a random variable (is $O_p(1)$) and $\frac{1}{2n} \to 0$, the product converges to $0$ in probability, i.e. $
        \frac{n r_n^4}{2} \xrightarrow{p} 0.$ Therefore, by Slutsky's lemma, the limit is determined entirely by the leading term
        
    \begin{equation}
        \label{eq:derivation_chi3}
        n \log(1 - r_n^2) \xrightarrow{d} -\chi^2_1.
    \end{equation}   
Therefore,  $2(\ell_n(\hat{\theta}^1\mid S^1) - \ell_n(\hat{\theta}^3\mid S^{3*}))$ converges in distribution to a $\chi_1^2$ random variable and hence it is bounded in probability.
Similarly, for $S^2,$ we have $\sqrt{n} \frac{f(\obsdata\mid S^1)}{f(\obsdata\mid S^{3*})} = \lambda_2$ with $\lambda_2 = O_p(1)$. The posterior probability of $S^3$ is 
        $$\pi(S^{3*}\mid\obsdata) = \frac{1}{1+\frac{f(\obsdata\mid S^1)}{f(\obsdata\mid S^{3*})} + \frac{f(\obsdata\mid S^2)}{f(\obsdata\mid S^{3*})}}.$$

    For simplicity, let $C_n := \frac{f(\obsdata\mid S^1)}{f(\obsdata\mid S^3)} + \frac{f(\obsdata\mid S^2)}{f(\obsdata\mid S^3)}$ and $Z_n := \frac{1}{1+C_n}$. Since $\sqrt{n} C_n = O_p(1)$, it implies $C_n = O_p(n^{-1/2})$, and thus $C_n \xrightarrow{p} 0$. By the continuous mapping theorem, $Z_n \to 1$. To analyze the rate of convergence to 1, we investigate the behaviour of the scaled difference $\sqrt{n}(Z_n-1)$. Expanding the difference, we obtain

    $$\sqrt{n}(Z_n-1) = \sqrt{n}\Big(\frac{1}{1+C_n}-1\Big) = -\sqrt{n}C_n\times \frac{1}{1+C_n}.$$

    Since $\sqrt{n}C_n \xrightarrow{d} \lambda_1+\lambda_2$ and $C_n \xrightarrow{p} 0$ by Slutsky's lemma we obtain that

    $$\sqrt{n}(Z_n-1) \xrightarrow{d} -(\lambda_1+\lambda_2).$$
    Therefore, the posterior mass concentrates to 1 with an error of order $O_p(n^{-1/2})$.
\end{proof}

\section{Proof of Theorem \ref{thm:ident_S1}: \texorpdfstring{$S^{1*}$}{S1*} the correct model}
\label{appendix:proof_thm1_ident}

\begin{theorem}
    \label{thm:ident_S1_app}
    Let $\obsdata = (X_i)_{1\leq i \leq n_N}$ be $n_N$ i.i.d. samples generated by \eqref{eq:SCM_vector} and $\intdata = (Y_i)_{1\leq i\leq m_N}$ be $m_N$ i.i.d. samples generated by interventional model, when intervened upon the second node, $\cdot\mid do(Y(2) = y )$. Assume that the priors $\pi(\theta\mid S)$ are four-times differentiable for $S \in \mathcal{S}$. If the true generating model is $\mathcal{M}^1 = \{\theta^*, S^{1*}\}$ with $\theta^*:= [w^*, \tsone, \tstwo]^\transpose$, as $N \to \infty$, we have,

    \[
        \frac{1}{N} \log \left( \frac{1}{\pi(S^{1*} \mid \mixdata)} - 1 \right) \xrightarrow{a.s.} -D_{12}(\eta),
    \]
    with 
\begin{equation}
    \label{eq:D_12_def_app}
    D_{12}(\eta) := \frac{1}{2} \log \left( \frac{\eta \sigma_{1,X}^2 + (1-\eta) \sigma_{1,Y}^2}{(\sigma_{1,X}^2)^\eta (\tsone)^{\bar{\eta}}} \right)
\end{equation}
    and  $\sigma_{1,X}^2:=w^{*2}\tstwo+\tsone,$ $\sigma_{1, Y}^2 := w^{*2}y^2+\tsone$.

\begin{proof}
    Given that the covariance matrix for $X$ exists and $y \in \mathbb{R}$, the second moments defined in Lemma \ref{lemma:MLEs_ident} are finite. Let $\bar{\eta} := 1 - \eta$. Assuming that $S^{1*}$ is the true generating model, we apply the strong law of large numbers to the sufficient statistics of the mixed dataset.

    The MLE derived in Lemma \ref{lemma:MLEs_ident} under the first model, $S^{1*}$, is consistent:
    \[
        \hat{w}^1 = \frac{S_{12}^X+S_{12}^Y}{S_2^X+S_2^Y} = \frac{\eta_N \frac{S_{12}^X}{n} + \bar{\eta}_N \frac{S_{12}^Y}{m}}{\eta_N \frac{S_{2}^X}{n} + \bar{\eta}_N \frac{S_{2}^Y}{m}} 
        \xrightarrow{a.s.} \frac{\eta\mathbb{E}[X(1)X(2)] + \bar{\eta}y\mathbb{E}[Y(1)]}{\eta\mathbb{E}[X(2)^2] + \bar{\eta}y^2} = w^*.
    \]
    Similarly, \{$\hat{\tau}_1^{2}  \mid S^{1*}\}\xrightarrow{a.s.} \tau_1^{*2}$ and $\{\hat{\tau}_2^{2} \mid S^{1*}\}  \xrightarrow{a.s.} \tau_2^{*2}$.

    For the second structure $S^2$, assuming $S^{1*}$ is true, the MLE $\hat{\theta}^2$ converge to a pseudo-true vector $\theta^2_\infty$ as $N \to \infty$
    \begin{align*}
        \frac{S_{12}^X}{S_1^X} &\xrightarrow{a.s} \frac{w^*\tau_2^{*2}}{w^{*2}\tau_2^{*2} + \tau_1^{*2}}, \\
        \frac{S_1^X + S_1^Y}{N} &\xrightarrow{a.s.} \eta(w^{*2}\tau_2^{*2} + \tau_1^{*2}) + \bar{\eta}(w^{*2}y^2 + \tau_1^{*2}), \\
        \frac{S_1^X S_2^X - (S_{12}^X)^2}{n_N S_1^X} &\xrightarrow{a.s.} \frac{\tau_1^{*2}\tau_2^{*2}}{w^{*2}\tau_2^{*2} + \tau_1^{*2}}.
    \end{align*}
    Therefore, $\hat{\theta}^1 \xrightarrow{a.s.} \theta^*$ and $\hat{\theta}^2 \xrightarrow{a.s.} \theta^2_\infty$. We can explicitly decompose the limit $\theta^2_\infty$ into the observational non-identifiable component $\gamma(\theta^*)$ and a shift term induced by the intervention:
    \begin{equation}
        \label{eq:theta2_S1_nonident}
        \theta^2_\infty = \gamma(\theta^*) +  [0, \bar{\eta} w^{*2}(y^2 - \tau_2^{*2}), 0]^\transpose.
    \end{equation}
    The non-zero middle term, $\bar{\eta} w^{*2}(y^2 - \tau_2^{*2})$, represents the discrepancy between the natural variance of the cause and the fixed interventional value. This term breaks the observational symmetry provided $y^2 \neq \tau_2^{*2}$. Finally, for the third structure $S^3$ (independence, where $w=0$), the estimator for $w$ is constrained to zero, and the variance parameters converge to the marginal variances of the data mixture. Specifically, we have
    \begin{equation}
        \hat{\theta}^3 \xrightarrow{a.s.} \theta^3_\infty = [ 0, \eta(w^{*2}\tau_2^{*2} + \tau_1^{*2}) + \bar{\eta}(w^{*2}y^2 + \tau_1^{*2}),\tau_2^{*2}]^\transpose.
    \end{equation}

    The log-marginal likelihood of the mixed dataset $\mixdata$ under $S^{1*}$ is 
    \begin{align*}
    \log f(\mixdata \mid S^{1*}) &= \log \int_{\Theta} \exp(\ell_{n,m}(\theta\mid S^{1*})) \pi(\theta\mid S^{1*})d\theta\\
    &= \log\int_{\Theta} \exp\left(\ell_n(\theta \mid S^{1*}) + \ell_m(\theta \mid S^{1*})\right) \pi(\theta \mid S^{1*}) \, d\theta, \\
    \end{align*}
where we define $\ell_n(\theta\mid S)$ and $\ell_m(\theta\mid S)$ as being the corresponding log-likelihoods under structure $S$ of the observational, $\obsdata$, and interventional datasets, $\intdata$, respectively

\begin{equation}
\label{eq:def_ln_lm}
    \ell_n(\theta\mid S):= \log f(\obsdata\mid \theta, S)\quad \text{and} \quad \ell_m(\theta\mid S) := \log f(\intdata\mid \theta, S).
\end{equation}
    
Since the models satisfy the Laplace regularity conditions described in Lemma \ref{lemma:laplace_regular}, the marginal likelihood can be approximated as follows as $N\to \infty$

\begin{align}
\label{eq:laplace_approx_S1}
    \nonumber\log f(\mixdata \mid S^{1*}) &= \left( \ell_n(\hat{\theta}^1 \mid S^{1*}) + \ell_m(\hat{\theta}^1 \mid S^{1*}) \right) + \frac{3}{2}\log 2\pi \\
    & -\frac{1}{2} \log \det\left( \nabla^2 \ell_n(\hat{\theta}^1 \mid S^{1*}) + \nabla^2 \ell_m(\hat{\theta}^1 \mid S^{1*}) \right) + \log\left( \pi(\hat{\theta}^1 \mid S^{1*}) + O\left(\frac{1}{N}\right) \right) \text{ a.s.} 
\end{align}
    
Similarly for the remaining models $S^k$ ($k=2,3$) we have
    \begin{align*}
    \log f(\mixdata \mid S^{k}) &= \left( \ell_n(\hat{\theta}^k \mid S^{k}) + \ell_m(\hat{\theta}^k \mid S^{k}) \right) + \frac{d_k}{2}\log 2\pi \\
    & -\frac{1}{2} \log \det\left( \nabla^2 \ell_n(\hat{\theta}^k \mid S^{k}) + \nabla^2 \ell_m(\hat{\theta}^k \mid S^{k}) \right) + \log\left( \pi(\hat{\theta}^k \mid S^{k}) + O\left(\frac{1}{N}\right) \right) \text{ a.s.}
\end{align*}
where $d_k$ denotes the corresponding dimensions of the parameter spaces, i.e. $d_2 = 3$ and $d_3 =2$ (there are only 2 parameters for the independence model because $w$ is set to 0). 
Normalizing by $1/N$ the log-marginal likelihood ratio of $S^{1*}$ and $S^2$ we obtain, as $N\to \infty$

\begin{align}
    \label{eq:expansion_ident}
    \nonumber\frac{1}{N}\log \frac{f(\mixdata\mid S^{1*})}{f(\mixdata\mid S^2)} &= \eta_N \left[ \frac{1}{n_N} \left( \ell_n(\hat{\theta}^1 \mid S^{1*}) - \ell_n(\hat{\theta}^2 \mid S^2) \right) \right] + (1-\eta_N) \left[ \frac{1}{m_N} \left( \ell_m(\hat{\theta}^1 \mid S^{1*}) - \ell_m(\hat{\theta}^2 \mid S^2) \right) \right] \\
    & + O\left(\frac{\log N}{N}\right) \text{ a.s.}
\end{align}

Since both $\ell_n(\theta\mid S)$ and $\ell_m(\theta \mid S)$ are continuous for all $\theta\in \Theta$, then, by the continuous mapping theorem and the strong law of large numbers, the log-marginal likelihood odds converge to a weighted sum of relative entropies

    \begin{equation}
        \label{eq:mixture_KL1}
        \frac{1}{N} \log \frac{f(\mixdata \mid S^{1*})}{f(\mixdata \mid S^2)} \xrightarrow{a.s.} \eta D(P_{m(\theta^*, S^{1*})}^X \| P_{m(\theta^2_\infty, S^2)}^X) + \bar{\eta} D(P_{m(\theta^*, S^{1*})}^Y \| P_{m(\theta^2_\infty, S^2)}^Y) =: D_{12},
    \end{equation}
    and similarly
    \begin{equation}
    \label{eq:mixture_KL2}
        \frac{1}{N} \log \frac{f(\mixdata \mid S^{1*})}{f(\mixdata \mid S^3)} \xrightarrow{a.s.} \eta D(P_{m(\theta^*, S^{1*})}^X \| P_{m(\theta^3_\infty, S^3)}^X) + \bar{\eta} D(P_{m(\theta^*, S^{1*})}^Y \| P_{m(\theta^3_\infty, S^3)}^Y) =: D_{13}.
    \end{equation}
    with $D_{12}$ is defined in \eqref{eq:D_12_def}. Next, the difference between the two relative entropies is
    \begin{align*}
        D_{12} - D_{13} &= \frac{1}{2}\log \tau_1^{*2} - \frac{1-\eta}{2}\log \tau_1^{*2} - \frac{1-\eta}{2} \log(w^{*2}\tau_2^{*2} + \tau_1^{*2}) \\
        &= -\frac{\eta}{2} \log \frac{w^{*2}\tau_2^{*2} + \tau_1^{*2}}{\tau_1^{*2}} < 0,
    \end{align*}

    Since the posterior probability of $S^{1*}$ is given by the following formula,

    $$\pi(S^{1*}\mid \mixdata) = \frac{1}{1+\frac{f(\mixdata\mid S^2)}{f(\mixdata\mid S^{1*})}+ \frac{f(\mixdata\mid S^3)}{f(\mixdata\mid S^{1*})}}$$
    and $D_{12}$ and $D_{13}$ are strictly positive (within each model, the parameters are identifiable) with $D_{12}<D_{13}$ the log-posterior ratio for the true model behaves as follows
    \[
        \frac{1}{N} \log \left( \frac{1}{\pi(S^{1*} \mid \mixdata)} - 1 \right) \xrightarrow{a.s.} - \min(D_{12}, D_{13}) = -D_{12}.
    \]
    Thus, $\pi(S^{1*} \mid \mixdata) \xrightarrow{a.s.} 1$.
\end{proof}

\section{Proof of Theorem \ref{thm:ident_S1}: \texorpdfstring{$S^{2*}$}{S2*} the correct model}
\label{appendix:proof_thm2_ident}
\begin{theorem}
Let $\obsdata = (X_i)_{1\leq i \leq n_N}$ be $n_N$ i.i.d. samples generated by \eqref{eq:SCM_vector} and $\intdata = (Y_i)_{1\leq i\leq m_N}$ be $m_N$ i.i.d. samples generated by interventional model, when intervened upon the second node, $\cdot\mid do(Y(2) = y )$. Assume that the priors $\pi(\theta\mid S)$ are four-times differentiable for any $S \in \mathcal{S}$. If the true generating model is $\mathcal{M}^2 = \{\theta^*, S^{2*}\}$ with $\theta^* := [w^*, \tsone, \tstwo]^\transpose$ as $N \to \infty$, we have

    \[
        \frac{1}{N} \log \left( \frac{1}{\pi(S^{2*} \mid \mixdata)} - 1 \right) \xrightarrow{a.s.} -D_{21}(\eta),
    \]
    with 
    \begin{equation}
        \label{eq:D_21_def_app}
        D_{21}(\eta) = \frac{1}{2}\log \left( 1- \frac{\eta^2 w^{*2}\tau_1^{*2}}{\eta\sigma_{2, Y}^2 +\bar{\eta}y^2}\right) \left(\frac{\sigma_{2, Y}^2}{\tau_2^{*2}} \right)^ \frac{\eta}{2},
    \end{equation}
with $\sigma_{2, Y}^2 = w^{*2}\tsone+\tstwo$.
\end{theorem}
\end{theorem}

\begin{proof}
    We proceed analogously to the proof of Theorem \ref{thm:ident_S1_app}. Given that the covariance matrix of $X$ exists and the intervention value $y \in \mathbb{R}$ is fixed, all relevant second moments are finite. Let $\bar{\eta} := 1 - \eta$.
    
    First, we establish the consistency of the MLEs under the true generating model $S^{2*}$. Applying the strong law of large numbers to the sufficient statistics of the mixed dataset $\mixdata$ we have
    \begin{equation}
        \hat{w} = \frac{S_{12}^X + S_{12}^Y}{S_1^X + S_1^Y} \xrightarrow{a.s.} \frac{\eta \mathbb{E}[X(1)X(2)] + \bar{\eta}\mathbb{E}[Y(1)Y(2)]}{\eta \mathbb{E}[X(1)^2] + \bar{\eta}\mathbb{E}[Y(1)^2]}.
    \end{equation}
    Under the true causal structure $X(1) \to X(2)$, the intervention on the child node $X(2)$ does not alter the marginal distribution of the parent $X(1)$. Therefore, $\mathbb{E}[Y(1)Y(2)] = \mathbb{E}[X(1)]y = 0$ and $\mathbb{E}[Y(1)^2] = \tau_1^{*2}$. Consequently:
    \begin{equation}
        \hat{w} \xrightarrow{a.s.} \frac{\eta w^* \tau_1^{*2}}{\eta \tau_1^{*2} + \bar{\eta}\tau_1^{*2}} = w^*.
    \end{equation}
    Similarly, the variance estimators satisfy
    \begin{align*}
        \hat{\tau}_1^{2} &= \frac{S_1^X + S_1^Y}{N} \xrightarrow{a.s.} \eta \tau_1^{*2} + \bar{\eta} \tau_1^{*2} = \tau_1^{*2}, \\
        \hat{\tau}_2^{2} &= \frac{S_2^X + S_2^Y - \hat{w}^2(S_{12}^X + S_{12}^Y)}{N} \xrightarrow{a.s.} \tau_2^{*2}.
    \end{align*}
    Thus, $\hat{\theta}^2 \xrightarrow{a.s.} \theta^*$.

    Next, we analyze the convergence of the MLEs for the incorrect reverse model $S^1$. Given the data generating process $S^2$, the estimators converge to a pseudo-true parameter vector $\theta_\infty^1$, which is as follows
    \begin{equation}
        \theta_\infty^1 = \left[ 
        \frac{\eta w^*\tau_1^{*2}}{\eta(w^{*2}\tau_1^{*2}+\tau_2^{*2})+\bar{\eta}y^2}, \quad
        \frac{\eta\tau_1^{*2}\tau_2^{*2}+\eta\bar{\eta}w^{*2}\tau_1^{*4}+\bar{\eta}y^2\tau_1^{*2}}{\eta(w^{*2}\tau_1^{*2}+\tau_2^{*2})+\bar{\eta}y^2}, \quad
        w^{*2}\tau_1^{*2}+\tau_2^{*2} 
        \right]^\transpose.
        \label{eq:theta1_limit_under_M2}
    \end{equation}
    Notably, unlike the purely observational case, the presence of interventional data ($\eta < 1$) affects both the estimated weight and the variance of the first node, breaking the symmetry with $\gamma^{-1}(\theta^*)$. For $\eta \to 1$, the above expression recovers the observational limit.

    For the independence model $S^3$, the MLEs converge to:
    \begin{equation}
        \hat{\theta}^3 \xrightarrow{a.s.} [0, \tau_1^{*2}, w^{*2}\tau_1^{*2}+\tau_2^{*2}]^\transpose =: \theta^3_{\infty}.
    \end{equation}

    Following the same steps as in \eqref{eq:expansion_ident}-- \eqref{eq:mixture_KL2}, the normalized log-marginal likelihood ratio converge almost surely to the weighted relative entropies
    \begin{align*}
        \frac{1}{N} \log \frac{f(\mixdata \mid S^{2*})}{f(\mixdata \mid S^1)} &\xrightarrow{a.s.} \eta D(P_{m(\theta^*, S^{2*})}^X \| P_{m(\theta^1_\infty, S^1)}^X) + \bar{\eta} D(P_{m(\theta^*, S^{2*})}^Y \| P_{m(\theta^1_\infty, S^1)}^Y) =: D_{21}, \\
        \frac{1}{N} \log \frac{f(\mixdata \mid S^{2*})}{f(\mixdata \mid S^3)} &\xrightarrow{a.s.} \eta D(P_{m(\theta^*, S^{2*})}^X \| P_{m(\theta^3_\infty, S^3)}^X) + \bar{\eta} D(P_{m(\theta^*, S^{2*})}^Y \| P_{m(\theta^3_\infty, S^3)}^Y) =: D_{23}.
    \end{align*}
    Evaluating the Gaussian relative entropies yields
    \begin{align}
        D_{21} &= \frac{1}{2}\log \left( 1- \frac{\eta^2 w^{*2}\tau_1^{*2}}{\eta(w^{*2}\tau_1^{*2}+\tau_2^{*2}) +\bar{\eta}y^2}\right) + \frac{\eta}{2}\log\left(1+\frac{w^{*2}\tau_1^{*2}}{\tau_2^{*2}} \right), \\
        D_{23} &= \frac{\eta}{2}\log\left(1+\frac{w^{*2}\tau_1^{*2}}{\tau_2^{*2}} \right).
    \end{align}
    Comparing the two rates, we observe that
    \begin{equation*}
        D_{21} = D_{23} + \frac{1}{2}\log \left( 1- \frac{\eta^2 w^{*2}\tau_1^{*2}}{\eta(w^{*2}\tau_1^{*2}+\tau_2^{*2}) +\bar{\eta}y^2}\right).
    \end{equation*}
    Since the term inside the logarithm is strictly less than 1 because $\eta^2w^{*2}\tsone < \eta w^{*2}\tsone$ for any $\eta \in (0, 1)$, the log term is negative, implying $D_{21} < D_{23}$. This indicates that the reverse model $S^1$ is "closer" to the true distribution than the independence model $S^3$. Therefore, for any $\eta>0$, the posterior $\pi(S^{2*}\mid \mixdata)$ concentrates to 1 at an exponential rate given by the minimum relative entropy
    \begin{equation*}
        \frac{1}{N}\log\left(\frac{1}{\pi(S^{2*} \mid \mixdata)} - 1 \right) \xrightarrow{a.s.} - \min(D_{21}, D_{23}) = -D_{21}.
    \end{equation*}
\end{proof}

\section{Proof of Theorem \ref{thm:ident_S3}}

\begin{proof}
    If $S^{3*}$ is the true generating model, then it is easy to see from Lemma \ref{lemma:MLEs_ident} that the MLEs of all three models converge almost surely to the same limit by the strong law of large numbers, i.e.

    $$\hat{\theta}^1, \hat{\theta}^2, \hat{\theta}^3 \xrightarrow{a.s.} [0, \tsone, \tstwo]^\transpose = \theta^*$$
    because $\frac{1}{n}S_{12}^X \xrightarrow{a.s.} \mathbb{E}[X(1)X(2)] = \mathbb{E}[X(1)]\mathbb{E}[X(2)] = 0 $ and $\frac{1}{n}S_{12}^Y \xrightarrow{a.s.} y\mathbb{E}[Y(1)] = 0$. Since the models are all Laplace regular from Lemma \ref{lemma:laplace_regular}, the marginal likelihoods can be approximated for $N\to \infty$ as in \eqref{eq:laplace_approx_S1}
    
    \begin{align*}
    \log f(\mixdata \mid S^{1}) &= \left( \ell_n(\hat{\theta}^1 \mid S^{1}) + \ell_m(\hat{\theta}^1 \mid S^{1}) \right) + \frac{3}{2}\log 2\pi \\
    & -\frac{1}{2} \log \det\left( \nabla^2 \ell_n(\hat{\theta}^1 \mid S^{1*}) + \nabla^2 \ell_m(\hat{\theta}^1 \mid S^{1}) \right) + \log\left( \pi(\hat{\theta}^1 \mid S^{1}) + O\left(\frac{1}{N}\right) \right) a.s.
\end{align*}
    Next, by the continuous mapping theorem and the strong law of large numbers, we have that the two Hessians weighted by $\frac{1}{n_N}$ and $\frac{1}{m_N}$, respectively, converge almost surely to their respective negative Fisher information matrices as $N \to \infty$, i.e.

    \begin{align*}
        -\frac{1}{n_N}\nabla^2 \ell_n(\theta\mid  S^1) &\xrightarrow{a.s.} Diag\left(\frac{\tau_2^2}{\tau_1^2}, \frac{1}{2\tau_1^4}, \frac{1}{2\tau_2^4}\right) =: I^X_1([w, \tau_1^2, \tau_2^2]^\transpose)\\
        -\frac{1}{m_N}\nabla^2 \ell_m(\theta\mid S^1) &\xrightarrow{a.s.} Diag\left(\frac{y^2}{\tau_1^2}, \frac{1}{2\tau_1^4}, 0\right) =: I^Y_1([w, \tau_1^2, \tau_2^2]^\transpose)
    \end{align*}
    
    For completeness, we provide the Fisher information matrices for the remaining two models

    $$I_2^X([w, \tau_1^2, \tau_2^2]^\transpose) := Diag\left( \frac{\tau_1^2}{\tau_2^2}, \frac{1}{2\tau_2^4}, \frac{1}{2\tau_1^4}\right)\quad \text{and}\quad I_2^Y([w, \tau_1^2, \tau_2^2]^\transpose):= Diag\left(0,\frac{1}{2\tau_2^4}, 0\right)$$
    for the second structure $S^2$ and

    $$I_3^X([0, \tau_1^2, \tau_2^2]^\transpose) := Diag\left( \frac{1}{2\tau_1^4}, \frac{1}{2\tau_2^4}\right)\quad \text{and}\quad I_3^Y([0, \tau_1^2, \tau_2^2]^\transpose):=\left(\frac{1}{2\tau_1^4}, 0 \right)$$
    for the third structure, $S^3$. Therefore, by the continuous mapping theorem

    \begin{align}
        \label{eq:det_calc1}
        \nonumber \frac{1}{N^3} \det\left( \nabla^2 \ell_n(\hat{\theta}^1 \mid  S^1) + \nabla^2 \ell_m(\hat{\theta}^1 \mid  S^1) \right)  &=  \det\left( \eta_N\frac{1}{n_N}\nabla^2 \ell_n(\hat{\theta}^1  \mid S^1) + \bar{\eta}_N\frac{1}{m_N}\nabla^2 \ell_m(\hat{\theta}^1 \mid S^1) \right) \\
        & \xrightarrow{a.s.} \det \left( \eta I_1^X(\theta^*)+\bar{\eta}I_1^Y(\theta^*) \right) = \frac{\eta(\eta\tstwo+\bar{\eta}y^2)}{4\tau_1^{*6}\tau_2^{*4}},
    \end{align}
    
    \begin{align}
        \label{eq:det_calc2}
        \nonumber\frac{1}{N^3} \det\left( \nabla^2 \ell_n(\hat{\theta}^2 \mid  S^2) + \nabla^2 \ell_m(\hat{\theta}^2 \mid  S^2) \right) &= \det\left( \eta_N\frac{1}{n_N}\nabla^2 \ell_n(\hat{\theta}^2  \mid S^2) + \bar{\eta}_N\frac{1}{m_N}\nabla^2 \ell_m(\hat{\theta}^2 \mid S^2) \right)\\
        & \xrightarrow{a.s.} \det \left( \eta I_2^X(\theta^*)+\bar{\eta}I_2^Y(\theta^*) \right)  = \frac{\eta^2}{4\tau_2^{*6}\tsone}
    \end{align}
    and for the third model, $S^{3*}$
    \begin{align}
        \label{eq:det_calc3}
        \nonumber\frac{1}{N^2} \det\left( \nabla^2 \ell_n(\hat{\theta}^3 \mid  S^{3*}) + \nabla^2 \ell_m(\hat{\theta}^3 \mid  S^{3*}) \right) &= \det\left( \eta_N\frac{1}{n_N}\nabla^2 \ell_n(\hat{\theta}^3  \mid S^{3*}) + \bar{\eta}_N\frac{1}{m_N}\nabla^2 \ell_m(\hat{\theta}^3 \mid S^{3*}) \right)\\
        & \xrightarrow{a.s.} \det \left( \eta I_3^X(\theta^*)+\bar{\eta}I_3^Y(\theta^*) \right)   = \frac{\bar{\eta}}{4\tau_2^{*4}\tau_1^{*4}}.
    \end{align}
    
    Similar to the proof of Theorem \ref{thm:non_ident_S3}, the maximum log-likelihood difference between models $S^1$ and $S^3$ is 

    \begin{equation}
        \label{eq:log_lik_diff1-3}
        \ell_{n,m}(\hat{\theta}^3\mid S^{3*}) - \ell_{n,m}(\hat{\theta}^1\mid S^1) = \frac{n_N+m_N}{2}\log \left(1-\frac{(S_{12}^X+S_{12}^Y)^2}{(S_1^X+S_1^Y)(S_2^X+S_2^Y)} \right) + \frac{n_N}{2}\log \left(1-\frac{(S_{12}^X)^2}{S_1^XS_2^X} \right)
    \end{equation}
    with both terms in the sum converging in distribution to a $\chi^2_1$, i.e.

    $$-(n_N+m_N)\log \left(1-\frac{(S_{12}^X+S_{12}^Y)^2}{(S_1^X+S_1^Y)(S_2^X+S_2^Y)} \right) \xrightarrow{d} \chi_1^2$$
    and
    $$-n_N\log \left(1-\frac{(S_{12}^X)^2}{S_1^XS_2^X} \right) \xrightarrow{d} \chi_1^2$$
    as per the same steps, \eqref{eq:derivation_chi1}-- \eqref{eq:derivation_chi3}, in the proof of Theorem \ref{thm:non_ident_S3}. Next, the log-likelihood difference converges in distribution to a weighted $\chi^2_1$ distribution, and hence is bounded in probability, i.e. it is $O_p(1)$. Similarly, the log-likelihood difference between models $S^2$ and $S^{3*}$ is expressed as

    \begin{equation}
        \label{eq: log_lik_diff2-3}
        2\left(\ell_{n,m}(\hat{\theta}^3\mid  S^3) - \ell_{n,m}(\hat{\theta}^2\mid  S^2) \right) = \frac{n_N}{2}\log \left(1-\frac{(S_{12}^X)^2}{S_1^XS_2^X} \right) \xrightarrow{d} - \frac{1}{2}\chi^2_1, 
    \end{equation}
    which again is bounded in probability, $O_p(1)$. Then, using 
    \eqref{eq:log_lik_diff1-3}--\eqref{eq: log_lik_diff2-3}, the weighted log-marginal likelihood odds between models $S^1$ vs $S^{3*}$ and $S^2$ vs $S^{3*}$ are

    $$\sqrt{N}\log \frac{f(\mixdata\mid S^3)}{f(\mixdata\mid S^1)} \xrightarrow{d} \Lambda_1, \text{ and } \sqrt{N}\log \frac{f(\mixdata\mid S^3)}{f(\mixdata\mid S^2)} \xrightarrow{d} \Lambda_2$$ 
    with $\Lambda_1$ and $\Lambda_2$ being both $O_p(1)$, bounded in probability. Therefore, the posterior probability of the third model, $S^{3*}$, concentrates to 1 asymptotically at a rate of $O_p(1/\sqrt{N})$

    $$ \sqrt{N}(\pi(S^{3*}\mid \mixdata)-1) \xrightarrow{d} -(\Lambda_1+\Lambda_2).$$
\end{proof}

\section{Exact calculation of the posterior}

\begin{theorem}
    \label{thm:BGe_nonindent}
    Let $\obsdata = (X_i)_{1\leq i\leq n}$ be i.i.d. observations generated by \eqref{eq:SCM_vector}. Given the hierarchical priors defined in \eqref{eq:BGe_priors_compact}, the posterior distribution over the structures, represented by the vector $[\pi(S^1 \mid \obsdata), \pi(S^2 \mid \obsdata), \pi(S^3 \mid \obsdata)]^\transpose$, is
    \begin{align*}
        \Bigg[ 
        & \frac{(2\beta)^{\alpha_1+\alpha_2} \Gamma(\alpha_2+\frac{n}{2})\Gamma(\alpha_1+\frac{n}{2})}{\sqrt{\eta} \pi^n \Gamma(\alpha_1)\Gamma(\alpha_2)} \cdot \frac{(S_2^{X\lambda})^{\alpha_1+(n-1)/2}}{(S_2^{X\beta})^{\alpha_2+n/2}} \cdot \frac{1}{(S_1^{X\beta} S_2^{X\lambda} - (S_{12}^X)^2)^{\alpha_1+n/2}}, \\
        & \frac{(2\beta)^{\alpha_3+\alpha_4} \Gamma(\alpha_3+\frac{n}{2})\Gamma(\alpha_4+\frac{n}{2})}{\sqrt{\eta} \pi^n \Gamma(\alpha_3)\Gamma(\alpha_4)} \cdot \frac{(S_1^{X\lambda})^{\alpha_4+(n-1)/2}}{(S_1^{X\beta})^{\alpha_3+n/2}} \cdot \frac{1}{(S_1^{X\lambda} S_2^{X\beta} - (S_{12}^X)^2)^{\alpha_4+n/2}}, \\
        & \frac{(2\beta)^{\alpha_5+\alpha_6} \Gamma(\alpha_5+\frac{n}{2})\Gamma(\alpha_6+\frac{n}{2})}{\pi^n \Gamma(\alpha_5)\Gamma(\alpha_6)} \cdot \frac{1}{(S_1^{X\beta})^{\alpha_5+n/2}(S_2^{X\beta})^{\alpha_6+n/2}} 
        \Bigg]^\transpose,
    \end{align*}
    where 
    \begin{equation}
        S_i^{X\lambda} := S_{i}^X + \frac{1}{\lambda}, \quad S_{i}^{X\beta} := S_{i}^X + 2\beta \quad \text{for } i \in \{1, 2\},
        \label{eq:def_S_beta_eta}
    \end{equation}
    and $S_1^X, S_2^X$, and $S_{12}^X$ are defined in \eqref{eq:MLE_interv_X}.
\end{theorem}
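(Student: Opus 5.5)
Since the structure prior is uniform, $\pi(S^i\mid\obsdata)\propto f(\obsdata\mid S^i)$. The plan is therefore to compute the three marginal likelihoods in closed form using Normal--Inverse-Gamma conjugacy. The displayed vector is then read as these marginal likelihoods, with constants common to all three entries absorbed into the normalisation.

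I will relabel the hyper-parameters as $\alpha_1:=\alpha_{1,1}$, $\alpha_2:=\alpha_{1,2}$, $\alpha_3:=\alpha_{2,1}$, $\alpha_4:=\alpha_{2,2}$, $\alpha_5:=\alpha_{3,1}$, $\alpha_6:=\alpha_{3,2}$. The prior on $w$ is taken to scale with the noise variance of the child node, i.e. $\tau_1^2$ under $S^1$ and $\tau_2^2$ under $S^2$; this is what makes the model conjugate. The $\sqrt{\eta}$ in the statement should then be read as $\sqrt{\lambda}$.

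\textbf{Step 1 (root factor).} Under $S^1$ the likelihood factorises as in the proof of Lemma~\ref{lemma:log_lik_simple} into $\prod_i\phi_{\tau_2^2}(X_i(2))$ times $\prod_i\phi_{\tau_1^2}(X_i(1)-wX_i(2))$. The prior factorises accordingly, so the marginal likelihood is a product of two one-dimensional problems. For the root factor, integrate $\tau_2^2\sim\mathrm{IG}(\alpha_2,\beta)$ against $(2\pi\tau_2^2)^{-n/2}e^{-S_2^X/(2\tau_2^2)}$ using the Inverse-Gamma normalising constant. This gives
\[
(2\pi)^{-n/2}\frac{\beta^{\alpha_2}\Gamma(\alpha_2+n/2)}{\Gamma(\alpha_2)}\Big(\beta+\tfrac{S_2^X}{2}\Big)^{-(\alpha_2+n/2)}
=\pi^{-n/2}\frac{(2\beta)^{\alpha_2}\Gamma(\alpha_2+n/2)}{\Gamma(\alpha_2)}\,(S_2^{X\beta})^{-(\alpha_2+n/2)} .
\]

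\textbf{Step 2 (regression factor).} For the regression factor, first integrate $w\sim N(0,\lambda\tau_1^2)$ conditionally on $\tau_1^2$. Completing the square in $w$ gives the factor $(\lambda S_2^{X\lambda})^{-1/2}$, and the exponent becomes $-\big(S_1^X-(S_{12}^X)^2/S_2^{X\lambda}\big)/(2\tau_1^2)$. Integrating $\tau_1^2\sim\mathrm{IG}(\alpha_1,\beta)$ then yields the same Gamma-type constant with $S_2^X$ replaced by this residual sum of squares. The key algebraic identity is
\[
2\beta+S_1^X-\frac{(S_{12}^X)^2}{S_2^{X\lambda}}=\frac{S_1^{X\beta}S_2^{X\lambda}-(S_{12}^X)^2}{S_2^{X\lambda}} .
\]
Raising it to the power $-(\alpha_1+n/2)$ and combining with the $(S_2^{X\lambda})^{-1/2}$ produces the factor $(S_2^{X\lambda})^{\alpha_1+(n-1)/2}$.

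\textbf{Step 3 (assembly).} Multiplying the two factors gives the first entry. The second entry follows by the symmetry $X(1)\leftrightarrow X(2)$ with $(\alpha_1,\alpha_2)\to(\alpha_4,\alpha_3)$. The third entry is simply the product of two root-type factors with $\alpha_5,\alpha_6$, and has no $\lambda$-term because $w$ is absent.

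\textbf{Main obstacle.} The main difficulty is bookkeeping rather than ideas. One must track the powers of $2$ and $\pi$ (each $(2\pi)^{-n/2}$ combines with the $2^{\alpha+n/2}$ from rewriting $\beta+S/2$ to give $\pi^{-n/2}(2\beta)^{\alpha}$). One must also keep the $\alpha$-indexing consistent across structures, and confirm that every leftover constant is common to all three entries, so that it cancels on normalising by $\sum_j f(\obsdata\mid S^j)$.
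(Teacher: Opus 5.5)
Your proposal is correct and follows essentially the paper's argument: both compute the three marginal likelihoods in closed form via Normal--Inverse-Gamma conjugacy. The only difference is the order of integration: the paper integrates out $\tau_1^2,\tau_2^2$ first and then evaluates a Student-$t$-type integral in $w$, whereas you integrate $w$ first by completing the square and then do the Inverse-Gamma integral, and your reading of $\sqrt{\eta}$ as $\sqrt{\lambda}$ matches the paper's own derivation, which ends with $1/\sqrt{\lambda}$.
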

\begin{proof}
    \begin{align}
        \notag \pi(S^1\mid\obsdata) &\propto \int f(\obsdata\mid S^1, w, \tau_1^2, \tau_2^2) f(w\mid\tau_1^2)f(\tau_1^2)f(\tau_2^2)\, d(\tau_1^2)\, d(\tau_2^2)\, dw \\
        \notag &= \int \frac{1}{(\sqrt{2\pi \tau_1^2})^n}\exp \left(-\frac{1}{2\tau_1^2}\sum_{i=1}^n(X_1^{i}-wX_2^{i})^2 \right) \frac{1}{(\sqrt{2\pi \tau_2^2})^n}\exp \left(-\frac{1}{2\tau_2^2}S_2^X \right) \\
        \notag &\quad \times \frac{1}{\sqrt{2\pi\lambda\tau_1^2}}\exp\left(-\frac{w^2}{2\lambda\tau_1^2} \right) \frac{\beta^{\alpha_1+\alpha_2}}{\Gamma(\alpha_1)\Gamma(\alpha_2)} (\tau_1^2)^{-\alpha_1-1} \exp \left(-\frac{\beta}{\tau_1^2} \right) (\tau_2^2)^{-\alpha_2-1} \exp \left(-\frac{\beta}{\tau_2^2} \right)\, d(\tau_1^2)\, d(\tau_2^2)\, dw \\
        \notag &= \frac{1}{\sqrt{\lambda}} \frac{1}{(2\pi)^{n+1/2}} \frac{\beta^{\alpha_1+\alpha_2}}{\Gamma(\alpha_1)\Gamma(\alpha_2)} \frac{\Gamma(\alpha_2+\frac{n}{2})}{\left( \beta + \frac{S_2^X}{2} \right)^{\alpha_2+n/2}}\times \int_{-\infty}^\infty \frac{\Gamma(\alpha_1+\frac{n+1}{2})}{\left( \beta+ \frac{\sum_{i=1}^n(X_1^{i}-wX_2^{i})^2}{2} +\frac{w^2}{2\lambda}   \right)^{\alpha_1+\frac{n+1}{2}}} \, dw \\
        \notag &= \frac{(2\beta)^{\alpha_1+\alpha_2} \Gamma(\alpha_2+\frac{n}{2})\Gamma(\alpha_1+\frac{n}{2})}{\sqrt{\lambda} \pi^n \Gamma(\alpha_1)\Gamma(\alpha_2)} \cdot \frac{(1/\lambda + S_2^X)^{\alpha_1+(n-1)/2}}{(2\beta + S_2^X)^{\alpha_2+n/2}} \cdot \frac{1}{[(2\beta+S_1^X)(1/\lambda+S_2^X) - (S_{12}^X)^2]^{\alpha_1+n/2}}\\
        &= \frac{(2\beta)^{\alpha_1+\alpha_2} \Gamma(\alpha_2+\frac{n}{2})\Gamma(\alpha_1+\frac{n}{2})}{\sqrt{\lambda} \pi^n \Gamma(\alpha_1)\Gamma(\alpha_2)} \cdot \frac{ (S_2^{X\lambda})^{\alpha_1+(n-1)/2}}{(S_2^{X\beta})^{\alpha_2+n/2}} \cdot \frac{1}{(S_1^{X\beta} S_2^{X\lambda} - (S_{12}^X)^2)^{\alpha_1+n/2}},
        \label{eq:post_G1}
    \end{align}

    Similarly, we have
    \begin{equation}
        \pi(S^2\mid\obsdata) = \frac{(2\beta)^{\alpha_3+\alpha_4} \Gamma(\alpha_3+\frac{n}{2})\Gamma(\alpha_4+\frac{n}{2})}{\sqrt{\lambda} \pi^n \Gamma(\alpha_3)\Gamma(\alpha_4)} \cdot \frac{ (S_1^{X\lambda})^{\alpha_4+(n-1)/2}}{(S_1^{X\beta})^{\alpha_3+n/2}} \cdot \frac{1}{(S_1^{X\lambda} S_2^{X\beta} - (S_{12}^X)^2)^{\alpha_4+n/2}},
        \label{eq:post_G2}
    \end{equation}
    and
    \begin{align}
        \notag \pi(S^3\mid\obsdata) &\propto \int f(\obsdata\mid S^3, \tau_1^2, \tau_2^2) f(\tau_1^2)f(\tau_2^2)\, d(\tau_1^2)\, d(\tau_2^2) \\
        \notag &= \int \frac{1}{(\sqrt{2\pi \tau_1^2})^n}\exp \left(-\frac{1}{2\tau_1^2}S_1^X \right) \frac{1}{(\sqrt{2\pi \tau_2^2})^n}\exp \left(-\frac{1}{2\tau_2^2}S_2^X \right)\\
        \notag &\quad \times \frac{\beta^{\alpha_5+\alpha_6}}{\Gamma(\alpha_5)\Gamma(\alpha_6)} (\tau_1^2)^{-\alpha_5-1} \exp \left(-\frac{\beta}{\tau_1^2} \right) (\tau_2^2)^{-\alpha_6-1} \exp \left(-\frac{\beta}{\tau_2^2} \right)\, d(\tau_1^2)\, d(\tau_2^2)\\
        &= \frac{(2\beta)^{\alpha_5+\alpha_6} \Gamma(\alpha_5+\frac{n}{2})\Gamma(\alpha_6+\frac{n}{2})}{ \pi^n \Gamma(\alpha_5)\Gamma(\alpha_6)} \cdot \frac{1}{ (S_1^{X\beta})^{\alpha_5+n/2}(S_2^{X\beta})^{\alpha_6+n/2}}.
        \label{eq:post_G3}
    \end{align}
\end{proof}

\begin{theorem}
    \label{thm:BGe_interv}
    Let $\obsdata := (X_i)_{1\leq i \leq n}$ be $n$ i.i.d. observational samples, $\intdata:=(Y_i)_{1\leq i \leq m}$ be $m$ i.i.d. interventional samples from $\cdot \mid \text{do}(Y(2)= y)$ for a fixed $y\in \mathbb{R}$ and $\mixdata := \obsdata \cup \intdata$. Given the hierarchical priors defined in \eqref{eq:BGe_priors_compact}, the posterior distribution over the structures, $\left[\pi(S^1\mid \mixdata), \pi(S^2\mid \mixdata), \pi(S^3\mid \mixdata)) \right]^\transpose$ is given by

    \begin{align}
    \notag\frac{1}{c} \Bigg[&\frac{(2\beta)^{\alpha_1+\alpha_2}\Gamma(\alpha_1+\frac{m+n}{2})\Gamma(\alpha_2+\frac{n}{2})}{\sqrt{\eta}\pi^{n+\frac{m+1}{2}}\Gamma(\alpha_1)\Gamma(\alpha_2)}\frac{(S_2^{X\lambda}+S_2^Y)^{\alpha_1+\frac{m+n-1}{2}}}{[(S_1^{X\beta}+S_1^Y)(S_2^{X\lambda}+S_2^Y) - (S_{12}^X+S_{12}^Y)^2]^{\alpha_1+\frac{m+n}{2}} (S_2^{X\beta})^{\alpha_2+\frac{n}{2}}},\\
    \notag &\frac{(2\beta)^{\alpha_3+\alpha_4} \Gamma(\alpha_3+\frac{m+n}{2})\Gamma(\alpha_4+\frac{n}{2})}{\sqrt{\eta} \pi^{n+\frac{m+1}{2}} \Gamma(\alpha_3)\Gamma(\alpha_4)} \cdot \frac{(S_1^{X\lambda})^{\alpha_4+\frac{n-1}{2}}}{(S_1^{X\lambda}S_2^{X\beta} - (S_{12}^X)^2)^{\alpha_4+\frac{n}{2}}(S_1^{X\beta}+S_1^Y)^{\alpha_3+\frac{m+n}{2}}}, \\
    & \frac{(2\beta)^{\alpha_5+\alpha_6} \Gamma(\alpha_5+\frac{m+n}{2})\Gamma(\alpha_6+\frac{n}{2})}{\pi^{n+\frac{m+1}{2}}\Gamma(\alpha_5)\Gamma(\alpha_6)} \frac{1}{(S_1^{X\beta}+S_1^Y)^{\alpha_5+\frac{m+n}{2}}( S_2^{X\beta})^{\alpha_6+\frac{n}{2}}}
    \Bigg],
    \end{align}
    where $c$ is the normalization constant, $S_i^{X\beta}, S_i^{X\lambda}$ are defined in \eqref{eq:def_S_beta_eta} and $S_i^Y, S_{12}^Y$ are defined in \eqref{eq:MLE_interv_Y} with $i \in \{1, 2\}$.
\end{theorem}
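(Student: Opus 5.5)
The posterior vector in Theorem~\ref{thm:BGe_interv} is obtained by computing each unnormalised marginal likelihood $\int f(\mixdata\mid S^i,\theta)\pi(\theta\mid S^i)\,d\theta$ in closed form and then normalising by $c$. The computation follows the observational proof of Theorem~\ref{thm:BGe_nonindent}. The one new ingredient is that the interventional likelihood under each structure factorises into a product of a Gaussian term for $Y(1)$ and no term at all for the clamped node $Y(2)=y$. Conjugacy is therefore preserved, and the interventional data enter only through the statistics $S_1^Y$, $S_{12}^Y$ and $S_2^Y=my^2$ of \eqref{eq:MLE_interv_Y}.

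\textbf{Structure $S^1$.} I would write the full log-likelihood as in the proof of Lemma~\ref{lemma:MLEs_ident}. This gives a factor $(2\pi\tau_1^2)^{-(n+m)/2}\exp\{-[(S_1^X+S_1^Y)-2w(S_{12}^X+S_{12}^Y)+w^2(S_2^X+S_2^Y)]/(2\tau_1^2)\}$ and a separate factor $(2\pi\tau_2^2)^{-n/2}\exp\{-S_2^X/(2\tau_2^2)\}$. The integral over $\tau_2^2$ against $\mathrm{IG}(\alpha_2,\beta)$ is unchanged from the observational case and produces $\Gamma(\alpha_2+n/2)(S_2^{X\beta})^{-(\alpha_2+n/2)}$. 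For the $(w,\tau_1^2)$ block, I would absorb the prior $N(0,\lambda\tau_1^2)$ into the quadratic in $w$, which turns $S_2^X+S_2^Y$ into $S_2^{X\lambda}+S_2^Y$. Completing the square in $w$ and integrating gives a factor $\sqrt{2\pi\tau_1^2/(S_2^{X\lambda}+S_2^Y)}$ and a residual $[(S_1^{X\beta}+S_1^Y)(S_2^{X\lambda}+S_2^Y)-(S_{12}^X+S_{12}^Y)^2]/(S_2^{X\lambda}+S_2^Y)$. The remaining inverse-gamma integral in $\tau_1^2$ has shape $\alpha_1+(n+m)/2$. Combining the powers yields the first entry.

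\textbf{Structures $S^2$ and $S^3$.} Under $S^2$ the interventional samples contribute only $\prod_j\phi_{\tau_1^2}(Y_j(1))$, because node $1$ is the parent and the mechanism for node $2$ is severed. The $(w,\tau_2^2)$ integral is therefore exactly the observational one, with shape $\alpha_4+n/2$ and the factor $S_1^{X\lambda}S_2^{X\beta}-(S_{12}^X)^2$. The $\tau_1^2$ integral picks up $S_1^Y$, giving $\Gamma(\alpha_3+(m+n)/2)(S_1^{X\beta}+S_1^Y)^{-(\alpha_3+(m+n)/2)}$. Under $S^3$ the integrand is a product of two independent inverse-gamma integrals, one with statistic $S_1^X+S_1^Y$ and one with statistic $S_2^X$.

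\textbf{Main obstacle.} The difficult step is bookkeeping rather than any conceptual issue. I must track the powers of $2$, $\pi$ and $\beta$ consistently across the three entries so that the common factor $\pi^{-(n+(m+1)/2)}$ and the $(2\beta)^{\alpha_i+\alpha_j}$ prefactors match. Only the ratios matter after dividing by $c$, so I would fix a common convention and check that all three entries carry the same power of $2\pi$ coming from the $n+m/2$ Gaussian factors. I would also check that the $\sqrt{\lambda}$ factor appears only in the two connected models; the statement writes this factor as $\sqrt{\eta}$.
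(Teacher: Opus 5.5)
Your proposal is correct and follows the paper's route. The paper gives no separate proof of this theorem, but it is the same conjugate integration as in the proof of Theorem~\ref{thm:BGe_nonindent}, with the interventional samples entering only the node-1 block through $S_1^Y$, $S_{12}^Y$ and $S_2^Y$. The paper integrates out the variances before $w$, while you integrate $w$ first; the order is immaterial by Tonelli.

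Your bookkeeping will give the common factor $\pi^{-(n+m/2)}$ rather than the stated $\pi^{-(n+(m+1)/2)}$, and $\sqrt{\lambda}$ rather than $\sqrt{\eta}$. The first discrepancy is a uniform constant across all three entries and is absorbed into $c$. The second is a typo in the statement, as you noted.
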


\section{Further plots}

\begin{figure}[h!]
    \centering
    \includegraphics[width=\linewidth]{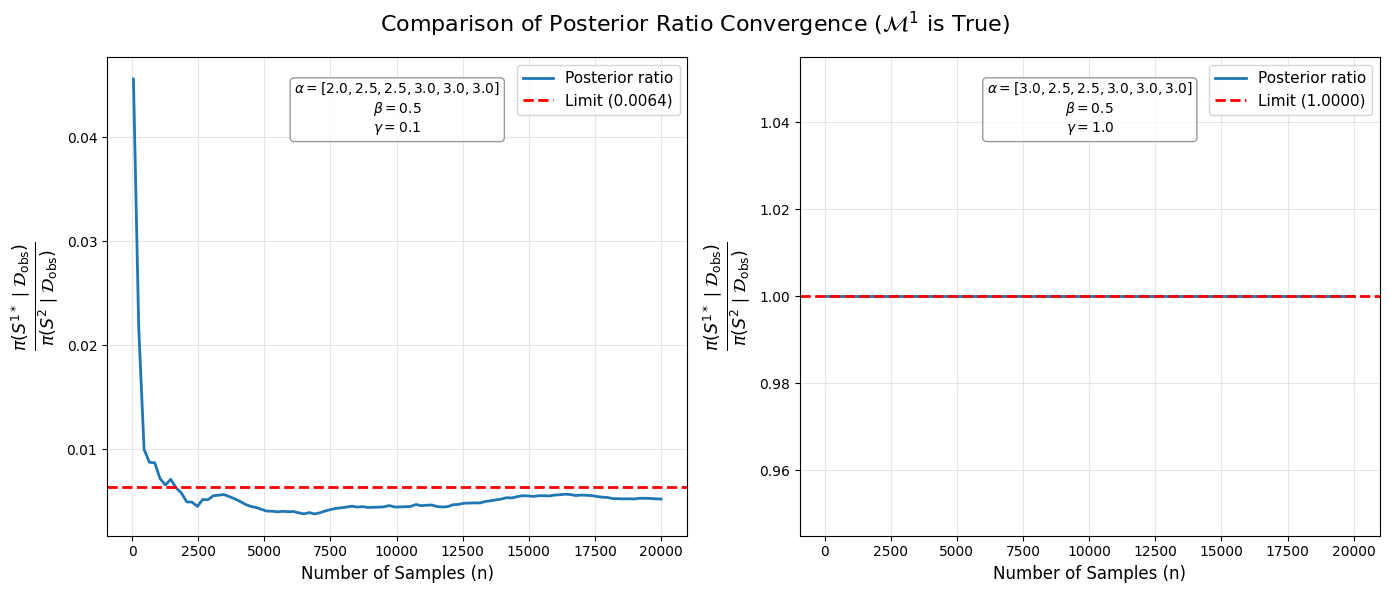}
    \caption{Posterior ratio $\frac{\pi(S^1 \mid \obsdata)}{\pi(S^2 \mid \obsdata)}$ as a function of sample size $n$ when the true model is $S^1$. The hyper-parameters in the right plot correspond to the BGe prior.}
    \label{fig:comparison_post_odds}
\end{figure}

\begin{figure}[!t]
    \centering
    \includegraphics[width=\linewidth]{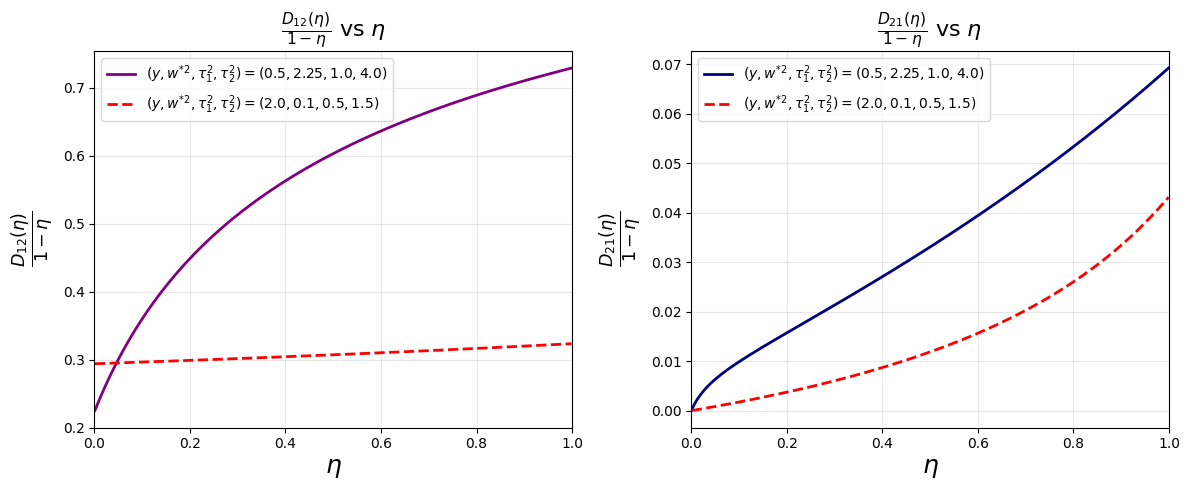}
    \caption{Comparison of the rates $\frac{D_{12}(\eta)}{1-\eta}$ (left) and $\frac{D_{21}(\eta)}{1-\eta}$ as  function of $\eta$ for different choices of $[w^*, \tsone, \tstwo]^\transpose$}
    \label{fig:new_exponents}
\end{figure}

\end{document}